\newtheorem{lemma}{Lemma}
\newtheorem{prop}{Proposition}
\newtheorem{remark}{Remark}
\newtheorem{theorem}{Theorem}
\newtheorem{corollary}{Corollary}
\newcommand{\D}{{\mathcal{D}}}
\newcommand{\R}{{\mathbb{R}}}
\newcommand{\e}{\eps}
\newcommand{\ds}{\displaystyle}
\newcommand{\eps}{\epsilon}
\newcommand{\dsigma}{\,d\sigma}
\newcommand{\dx}{\,dx}
\newcommand{\1}{1\hspace{-.55ex}\mbox{\rm l}}
\newcommand{\abs}[1]{\lvert#1\rvert}
\newcommand{\set}[1]{\{#1\}}
\renewcommand{\div}{\operatorname{div}}
\DeclareMathOperator*{\argmin}{arg\,min}
\newcommand{\Set}[2]{\left\lbrace#1\ ;\ #2\right\rbrace}
\newcommand{\Scal}[2]{\left\langle#1\,,\,#2\right\rangle}
\newcommand{\diffp}[3][]{\frac{\partial^{#1} #2}{{\partial #3}^{#1}}}
\renewcommand{\H}{\mathcal{H}}
\renewcommand{\S}{\mathbb{S}}
\newcommand{\F}{\mathcal{F}}
\renewcommand{\div}{\textup{div}}
\title{Consistency result for a non monotone scheme for
        anisotropic mean curvature flow}
\author{Eric BONNETIER \\
        {\footnotesize LJK, Universit\'e de Joseph Fourier, B.P. 53, 38041 Grenoble Cedex 9, France,} \\
           {\footnotesize \it Eric.Bonnetier@imag.fr } \\
         ~\\
         Elie BRETIN \\
       {\footnotesize CMAP, Ecole Polytechnique, CNRS, 91128 Palaiseau, France, }\\
         {\footnotesize \it bretin@polytechnique.fr } \\
         ~\\ 
         Antonin CHAMBOLLE \\
        {\footnotesize CMAP, Ecole Polytechnique, CNRS, 91128 Palaiseau, France } \\
         {\footnotesize \it antonin.chambolle@polytechnique.fr } }
\date{Juin 2010}
\begin{document}

\maketitle

\begin{abstract}

In this paper, we propose a new scheme for anisotropic motion by mean curvature in $\R^d$.
The scheme consists of a phase-field approximation of the motion, where the nonlinear diffusive
terms in the corresponding anisotropic Allen-Cahn equation are linearized in the Fourier space.
In real space, this corresponds to the convolution with a kernel of the form
\[
K_{\phi,t}(x) = \F^{-1}\left[ e^{-4\pi^2 t \phi^o(\xi)} \right](x).
\]
We analyse the resulting scheme,
following the work of Ishii-Pires-Souganidis on the convergence of the
Bence-Merriman-Osher algorithm for isotropic motion by mean curvature.
The main difficulty here, is that the kernel $K_{\phi,t}$ is not positive
and that its moments of order 2 are not in $L^1(\R^d)$. Still, we can show 
that in one sense the scheme is consistent with the anisotropic mean curvature flow.
\end{abstract}


\section{Introduction and motivation}

In the last decades, a lot of attention has been devoted to the motion of interfaces, 
and particularly to motion by mean curvature. 
Applications concern image processing (denoising, segmentation), material sciences 
(motion of grain boundaries in alloys, crystal growth), biology (modelling of vesicles 
and blood cells). 
 
\subsection{Motion by isotropic mean curvature}

The simplest case of motion by isotropic mean curvature concerns
the evolution of a set $\Omega_t \subset \R^d$ with a boundary $\partial \Omega_t$ of
codimension 1, whose normal velocity $V_n$ is proportional to its mean curvature $\kappa$ 
\begin{equation} \label{eq:motion_mc}
   V_n(x) = \kappa(x),
	\quad \textrm{a.e.}\; x \in \Gamma_t,
\end{equation}
with the convention that $\kappa$ is negative    
if $\Omega_t$ is a convex set.
It at $t=0$ the initial set $\Omega_0$ is smooth, then the evolution is well-defined 
until some time $T > 0$ when singularities may develop~\cite{Ambrosio2000}.
\vspace*{5mm}

Viscosity solutions provide a more general framework, that defines evolution past singularities,
or evolution from non-smooth initial sets.
If $g$ is a level set function of $\Omega_0$, i.e.,
$$\Omega_0 = \Set{x \in \R^d}{g(x) \leq 0}, \quad \partial \Omega_0 = \Set{x \in \R^d}{g(x) = 0},$$
and if $u$ denotes the viscosity solution to the Hamilton-Jacobi equation
$$  \begin{cases}
     u_t = \div \left( \frac{\nabla u}{|\nabla u| } \right) |\nabla u| \\
     u(0,x) = g(x),
    \end{cases}
$$ 
then the generalized mean curvature flow $\Omega_t$ starting from $\Omega_0$ is 
defined by the 0-level set of $u$~\cite{Evans1992,OsherSethian,Chen_giga,Evans_spruck}   
$$\Omega_t = \Set{x \in \R^d}{u(t,x) \leq 0}, \quad \partial \Omega(t) = \Set{x \in \R^d}{u(t,x) = 0}.$$
\vspace*{5mm}

Alternatively, one can define the motion by mean curvature as the limit of diffuse interface
approximations obtained by solving the Allen-Cahn equation
\begin{equation}
\label{eq:ac}
  \diffp{u}{t}
  = \Delta u - \frac{1}{\eps^2} W'(u) ,
\end{equation} 
where $\eps$ is a small parameter
(that determines the width of the diffuse interface) and where $W(s) = \frac{s^2(1-s)^2}{2}$
is a double well potential.
This equation can be viewed as a gradient flow for the energy
\[
  J_\eps(u)
  = \int_{\R^d} \left( \frac{\eps}{2} \abs{\nabla u}^2 + \frac{1}{\eps} W(u) \right) \dx.
\]
Modica and Mortola~\cite{Modica1977, Modica1977a} have shown that 
$J_\e$ approximates (in the sense of $\Gamma$- convergence) the surface energy $c_W \, J$
where
\[
J(\Omega) = \ds\int_{\partial \Omega} 1\, d\sigma
\quad \textrm{and}\quad
c_W = \ds\int_0^1 \sqrt{2 W(s)} \,ds.
\]
Existence, uniqueness, and a comparison principle have been established for~\eqref{eq:ac} 
(see for example chapters 14 and 15 in \cite{Ambrosio2000} and the references therein).

Let $u_\eps$ solve \eqref{eq:ac} with the initial condition
\[
  u_\eps(x,0) = q\left(\frac{d(x, \Omega_0)}{\eps}\right),
\]
where $d(x,\Omega)$ denotes the signed distance of a point $x$ to the set $\Omega$
and where the profile $q$ is defined by

\begin{eqnarray*}
 q &=& \argmin \left\{ \int_\R \left( \frac{1}{2} {\gamma'}^2(s)
    + W(\gamma(s)) \right)   ds ;\gamma \in H^1_{loc}(\R),\ \gamma(-\infty) = +1, \right. \\
  &~& \quad  \quad \quad \quad  \quad  \quad \quad \quad  \quad \quad \quad  \quad  \quad \quad  \quad \quad  \quad \quad \quad   \left.  \gamma(+\infty) = -1,\ \gamma(0) = \frac{1}{2} \right\}.
\end{eqnarray*}

Then, for smooth motion by mean curvature~\cite{Chen1992, Bellettini1995},
or for generalized motion by mean curvature without fattening~\cite{Barles1994, Evans1992},
the set
\[
  \Omega_\eps(t) =  \Set{x \in \R^d}{u_\eps(x,t) \ge \frac{1}{2}},
\]
approximates $\Omega(t)$ at the rate of convergence $O(\eps^2 \abs{\log \eps}^2)$. 
\vspace*{5mm}

The Bence-Merriman-Osher algorithm~\cite{BenceMerrimanOsher} is yet another approximation to motion by mean curvature. Given a closed set $E \subset \R^d$, and denoting $\chi_E$ its characteristic function,
one defines
$$ T_hE = \Set{x \in \R^d}{ u(x,h) \geq \frac{1}{2}}, $$   
where $u$ solves the heat equation
$$ \begin{cases}
   \label{eq:heat}
    \frac{\partial u}{\partial t}(x,t) = \Delta u(x,t), & t > 0 \quad x \in \R^d \\
    u(x,0) = \chi_{E}(x).
   \end{cases}
$$
Setting $E_h(t) = T^{[t/h]}E$, where $[t/h]$ is the integer part of $t/h$,
Evans~\cite{Evans_BMO}, and Barles and Georgelin~\cite{Georgelin_Barles} have shown that 
$E_h(t)$ converges to $E_t$, the evolution by mean curvature from $E$.

\subsection{Motion by anisotropic mean curvature}
  
We use the framework of the Finsler geometry as described in~\cite{bellettini_1996}. 
Let $\phi : \R^d \to [0,+\infty[$  denote a strictly convex function in 
$C^2(\R^d \setminus \set{0}))$, which is 1-homogeneous and bounded, i.e.,
\[
\left\{ \begin{array}{ll}
\phi(t \xi) = |t| \phi(\xi) & \xi \in \R^d, \; t \in \R, 
\\
\lambda |\xi| \leq  \phi(\xi) \leq \Lambda |\xi| & \xi \in \R^d,
\end{array} \right.
\]
for two positive constants $0< \lambda \leq \Lambda < + \infty$. 
We assume that its dual function $\phi^o : \R^N \to [0,+\infty[$, defined by 
 $$ \phi^o(\xi^*) = \sup\Set{\xi^*.\xi}{ \phi(\xi) \leq 1}$$
is also in $C^2(\R^N \setminus \set{0}))$. 
Given a smooth set $E$ and a smooth function $u : \R^d \to \R$ such that 
$\partial E = \Set{x \in \R^d}{ u(x) = 0}$, we define 
 \begin{itemize}
  \item  
  the Cahn-Hoffman vector field $n_{\phi} =  \phi^o_{\xi}(\nabla u)$.
  \item  
  the $\phi$-curvature $\kappa_{\phi} = \div(n_{\phi})$.   
 \end{itemize}
We say that $E(t)$ is the evolution from $E$ by $\phi$-curvature, if at each time $t$,
the normal velocity $V_n$ is given by
$$ V_n = - \kappa_{\phi} n_{\phi}.$$

As in the case of isotropic flows, one can define motion by $\phi$-curvature using
a level set formulation, i.e., following the level lines of the solution to the
anisotropic Hamilton-Jacobi equation
\begin{equation} \label{eq:hamilton_jacobi_phi}
  u_t =  \phi^o(\nabla u )~ \phi^o_{\xi\xi}( \nabla u ) : \nabla^2 u.
\end{equation}
Existence, uniqueness and a comparison principle have been etablished in  \cite{Crandall_Ishii_Lions,chen_giga_goto,MR1115933,MR1205984}. 
\vspace*{5mm}

The anisotropic surface energy 
\[
  J(\Omega) = \int_{\partial \Omega} \phi^o(n) \dsigma.
\]
can be approximated by the Ginzburg-Landau-like energy
\[
  J_{\eps,\phi}(u)
  = \int_{\R^d} \left( \frac{\eps}{2} \phi^o(\nabla u)^2
    + \frac{1}{\eps} W(u)\right) \dx,
\]
and its gradient flow leads to the anisotropic Allen--Cahn equation \cite{Allen1979}
\begin{equation}
\label{eq:ac_ani}
  \diffp{u}{t}
  = \Delta_{\phi} u - \frac{1}{\eps^2} W'(u).
\end{equation} 
The operator $\Delta_{\phi} := \div \left( \phi^o_{\xi}(\nabla u ) \phi^o(\nabla u) \right)$
is called the anisotropic Laplacian.
\vspace*{5mm}
 
The Bence-Merriman-Osher algorithm has also been extended to anisotropic motion by mean curvature.
One generalization was proposed by Chambolle and Novaga~\cite{Chambolle_novaga2006} as follows:
Given a closed set $E$, let $T_h(E) = \Set{x \in \R^d}{u(x,h) \geq \frac{1}{2}}$, 
where $u(x,t)$ is the solution to  
$$ \begin{cases}
   \label{eq:anisotropic heat}
    \dfrac{\partial u}{\partial t}(x,t) = \Delta_{\phi} u(x,t), & t > 0 \quad x \in \R^d \\
    u(x,0) = \chi_{E}(x).  
   \end{cases}
 $$
Define then $E_h(t) = T_h^{[t/h]}E$.
The convergence of $E_h(t)$ to the generalized anisotropic mean curvature flow from $E$
is established in~\cite{Chambolle_novaga2006}. The result holds for very general anisotropic
surface tensions and even in the cristalline case.
However, because of the strongly nonlinear character of $\Delta_\phi$, the numerical
resolution of~(\ref{eq:anisotropic heat}) is much harder than in the isotropic case. 
 
Another generalization of the Bence-Merriman-Osher algorithm has been studied by Ishii, Pires and Souganidis ~ 
\cite{Ishhi_pires_souganidis}. 
The main idea is to represent the solution $u$ of~(\ref{eq:anisotropic heat})
as the convolution of $\chi_E$ with a geometric kernel. 
More precisely, Let $f : \R^d \to \R$ be a function which satisfies the following conditions
  \begin{itemize}
  \item[$(A_1)$] 
  Positivity and symmetry : 
  $$ f(x) \geq 0, \quad f(-x)=f(x), \quad \text{and } \int_{\R^d} f(x) dx = 1$$
  \item[$(A_2)$] 
  Boundedness of the moments :
  $$ \int_{\R^d} |x|^2 f(x) dx < + \infty,$$
   $$ 0 < \int_{p^{\perp}} (1 + |x|^2) f(x) d \H^{d-1} < \infty, \quad \text{ for all  } p \in \S^{d-1}.$$
  \item[$(A_3)$] 
  Smoothness :
   $$ p \to  \int_{p^{\perp}} f(x) d \H^{d-1}  \text{ and }  
   p \to  \int_{p^{\perp}} x_i x_j f(x) d \H^{d-1}
   \quad \text{are continous  on}\;\S^{d-1}.
   $$
  \end{itemize}
Given $E \subset \R^d$, define $T_hE = \Set{x \in \R^d}{u(x,h) \geq \frac{1}{2}}$,
where 
$$ u(x,h) = \int_{\R^d} \tilde{K}_h(y) \,\chi_E(y-x) \,dy,$$
with the kernel  
$$ \tilde{K}_t(x) = \frac{1}{t^{d/2}} f( \sqrt{t} x), \quad x \in \R^d. $$
They showed  \cite{Ishhi_pires_souganidis}  that $T_h^{[t/h]}E$ converges to the set $E(t)$
obtained from $E$ as the generalized motion by anisotropic mean curvature via
the Hamilton Jacobi equation
 $$ u_t = F(D^2 u, D u)   $$
where 
$$ F(X,p) = \left( \int_{p^{\perp}} f(x) d \H^{d-1}(x)  \right)^{-1} \left( - \frac{1}{2} \int_{p^{\perp}} \Scal{Xx}{x} f(x) d \H^{d-1}(x)  \right). $$

This result raises a natural question: Given an anisotropy $\phi^o$, can one find a
kernel $f$, so that the generalized fromt $\partial E(t)$ defined by the associated 
Hamilton Jacobi equation evolves by $\phi$-mean curvature ? 
This problem has been addressed by Ruuth and Merriman~\cite{Ruuth_merriman1} in dimension~2:
They propose a class of kernels and study the corresponding numerical schemes, which
prove very efficient. However, their appraoch cannot be generalized to higher dimensions.
In contrast, our algorithm is not specific to the dimension~2.
\vspace*{5mm}

\subsubsection{A new algorithm for motion by anisotropic mean curvature}

In this work, our objective is to extend Ishii-Pires-Souganidis' analysis to study 
the following algorithm.
Starting from a bounded closed set $E \subset \R^d$, we define an operator $T_h E$ by 
\begin{eqnarray} \label{def_Th} 
T_h E = \Set{x \in \R^d}{ u(x,h) \geq \frac{1}{2}}, 
\end{eqnarray}
where $u$ solves the following parabolic equation:
$$ 
(2)~  \begin{cases}
    \frac{\partial u}{\partial t}(x,t) = \tilde{\Delta}_{\phi} u(x,t), & t > 0 \quad x \in \R^d \\
    u(x,0) = \chi_{E}(x).  
   \end{cases}
$$
Denoting by $\F(u)$ the Fourier transform of a function $u$,
\begin{eqnarray*}
\F(u)(\xi) &=& \ds\int_{\R^d} u(x) e^{-2\pi i x \cdot \xi} \, dx,
\end{eqnarray*}
the operator $\tilde{\Delta}_{\phi}$ is defined by
$$ 
\tilde{\Delta}_{\phi} u = \F^{-1} \left(  -4 \pi^2 \phi^o(\xi)^2 \F(u)(\xi) \right).
$$
and can be seen as a linearization of $\Delta_{\phi}$ in the Fourier space. 
The solution  $u$ of $(2)$ can be expressed  as a convolution product of 
the characteristic function of $E$ and of the anisitropic  kernel
$$
K_{\phi,t}(x) =  \F^{-1}\left( e^{-4\pi^2 t \phi^o(\xi)^2} \right)(x). 
$$
However, this kernel (more precisely $K_{\phi,t=1}$) does not satisfy the hypotheses $(A_1)$
and $(A_2)$ above: $K_{\phi,1}$ is not positive and 
$x \to  \int_{\R^d} |x|^2 K_{\phi}(x)$ is not in $L^1(\R)$.
In section~2, we etablish some properties of the anisotropic heat kernel $ K_{\phi}$. 
Precisely, we prove that the associated Hamiltonian flow is

\begin{eqnarray*}
 F(X,p) &=& \left(  \int_{p^{\perp}} K_{\phi} d\H^{d-1}   \right)^{-1} \left( \frac{1}{2} \int_{p^{\perp}} <Xx,x> K_{\phi}(x) d\H^{d-1}  \right) \\
&=& \phi^o(p) \phi^o_{\xi \xi}(p):X,
\end{eqnarray*}

which establishes a link between $K_{\phi}$ and  $\phi$-anisotropic mean curvature flow.

In section~3, we establish the consistency of a Bence-Merriman-Osher 
scheme based on~(\ref{def_Th}). We have however not been able to prove the convergence of the algorithm
to $\phi$-anisotropic mean convergence in the general setting of uniformly bounded
and continuous functions. The main difficulty in trying to extend the
argument of \cite{Ishhi_pires_souganidis}. is the thresholding and the lack of monotonicity of our sheme that may not preserve the continuity of the front. 

Therefore, in the last section, we present numerical evidence of the convergence
of a modified scheme. In this scheme, the thresholding is obtained via a reaction
term, in the spirit of phase-field approximation. Computationally, the scheme
proves very efficient and very fast, even when the anisotropy  $\phi^o$  is not smooth.


\section{The operator $\tilde{\Delta}_{\phi}$ and properties of the anisotropic kernel $K_{\phi}$}

Let $\phi = \phi(\xi)$ denote a strictly convex smooth Finsler metric 
and let $\phi^o$ denote its dual~(see \cite{bellettini_1996}). 
We assume that that $\phi^o$ is a $1$-homogenous, symmetric function 
in~$C^{\infty}(\R^d \setminus \set{0})$ that satisfies 
\begin{eqnarray} \label{bounds_phi^o}
\lambda |\xi| \leq \phi^o(\xi) \leq \Lambda |\xi|.
\end{eqnarray}
In particular, it follows that for any $\xi \in \R^d$ and $t \in \R$,
$$ \begin{cases}
      \phi^o(t \xi) = |t| \phi^o(\xi) \\
      \phi^o_{\xi}(t \xi) = \frac{t}{|t|} \phi^o_{\xi}(\xi) \\
      \phi^o_{\xi}(\xi).\xi = \phi^o(\xi). 
     \end{cases}
$$
The associated anisotropic mean curvature is defined as the anisotropic Laplacian operator
$$ \triangle_{\phi} u =  \div \left( \phi^o(\nabla u) \phi^o_{\xi}(\nabla u) \right), 
\quad \forall u \in H^2(\Omega)
$$ 

A direct computation shows that for any $\xi \in \R^d$, 
$$ \begin{cases} \label{prop:vecteur_propre}
      \triangle_{\phi} \left[ \cos(2 \pi \xi.x ) \right]  = -4 \pi^2 \phi^o(\xi)^2 \cos(2 \pi \xi.x ) \\
      \triangle_{\phi} \left[ \sin(2 \pi \xi.x ) \right]  = -4 \pi^2 \phi^o(\xi)^2 \sin(2 \pi \xi.x ), \\
    \end{cases}
$$
i.e., that plane waves are eigenfunctions of the anisotropic Laplacian (albeit nonlinear).
We define $\tilde{\triangle}_{\phi} : H^2(\R^d) \rightarrow L^2(\R^d)$ by 
$$ 
\tilde{\triangle}_{\phi} u = \F^{-1}\left[-4\pi^2\phi^o(\xi)^2 \F[u](\xi)  \right], 
$$
Given an initial condition $u_0 \in L^2(\R^d)$, we study the solution $u$ of, 
$$ \begin{cases}
    u_t(t,x) = \tilde{\triangle}_{\phi} u(t,x), \\
    u(0,x) =  u_0  
   \end{cases}
$$
The function $u$ can also be  expressed as the convolution product $u = K_{\phi,t}*u_0$, where the anisotropic heat kernel $K_{\phi,t}$  is defined by
$$ K_{\phi,t} = \F^{-1} \left[ e^{-4 \pi^2 t \phi^o(\xi)^2} \right].$$
We also set $K_\phi = K_{\phi,1}$.
In the rest of this section, we establish some properties of this operator.
\vspace*{5mm}
 
\begin{prop}[Regularity of $\hat{K}_{\phi}$] \label{prop_regularite}~\\
The function  
$\hat{K_{\phi}}:\xi \to e^{-4 \pi^2 \phi^o(\xi)^2}$
is in $W^{d+1,1}(\R^d)$, and  the distribution $ D^{d+2} \hat{K_{\phi}} $ is a regular function.
\end{prop}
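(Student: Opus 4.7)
The plan is to combine the Faà di Bruno formula with the homogeneity of $\phi^o$. Writing $\hat{K_\phi} = e^{\rho}$ with $\rho(\xi) = -4\pi^2\phi^o(\xi)^2$, the function $\rho$ is $C^\infty$ on $\R^d\setminus\{0\}$ and $2$-homogeneous, since $\phi^o$ is $1$-homogeneous and smooth off the origin. Consequently $D^\beta\rho$ is $(2-|\beta|)$-homogeneous, and compactness of $\S^{d-1}$ yields $|D^\beta\rho(\xi)|\le C_\beta|\xi|^{2-|\beta|}$ for every $\xi\ne 0$.

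By Faà di Bruno one can write $D^\alpha \hat{K_\phi} = e^{\rho}\sum_\pi c_\pi\prod_{B\in\pi}D^{\alpha_B}\rho$, where $\pi$ runs through partitions of the positions of $\alpha$ into non-empty blocks $B$, with $\sum_B |\alpha_B|=|\alpha|$. A partition with $j$ blocks contributes a factor of homogeneity $\sum_B(2-|\alpha_B|)=2j-|\alpha|$, so the dominant singularity at the origin corresponds to the one-block partition and gives the pointwise estimate
\[ |D^\alpha \hat{K_\phi}(\xi)| \,\le\, C|\xi|^{2-|\alpha|}\, e^{-4\pi^2\lambda^2|\xi|^2}, \qquad \xi \ne 0, \]
where the Gaussian factor uses $\phi^o(\xi)\ge\lambda|\xi|$. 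Integrating: the exponential handles the tail, and near the origin $|\xi|^{2-|\alpha|}$ is locally integrable exactly when $2-|\alpha|+d>0$, i.e.\ when $|\alpha|\le d+1$. Hence $D^\alpha \hat{K_\phi}\in L^1(\R^d)$ for all $|\alpha|\le d+1$, proving $\hat{K_\phi}\in W^{d+1,1}(\R^d)$.

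For $|\alpha|=d+2$ the pointwise bound degrades to $C|\xi|^{-d}$ near $0$, which sits at the edge of $L^1_{\mathrm{loc}}$, so the previous $L^1$ argument is no longer available. On the other hand $\hat{K_\phi}$ is still smooth on $\R^d\setminus\{0\}$, so the classical $(d+2)$-th derivative is a smooth function there. To conclude that the distribution $D^{d+2}\hat{K_\phi}$ is a regular function globally, I would argue by duality: for $\varphi\in C_c^\infty(\R^d)$, integrate by parts on $\{|\xi|>\epsilon\}$ using that $D^{d+1}\hat{K_\phi}$ is $L^1$ with sharp bound $C|\xi|^{1-d}$ on $\{|\xi|=\epsilon\}$, and check that the spherical boundary integrals do not produce any distribution supported at the origin as $\epsilon\to 0$. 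The main obstacle is precisely this borderline analysis at the origin: one must carefully estimate those boundary contributions to rule out concentrated, delta-type distributional terms, so that $D^{d+2}\hat{K_\phi}$ is represented away from $0$ by the classical pointwise derivative.
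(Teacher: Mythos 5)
Your strategy is the same as the paper's: a homogeneity bound $|D^\alpha \hat{K}_\phi(\xi)|\le C_\alpha|\xi|^{2-|\alpha|}e^{-c|\xi|^2}$ (which the paper asserts directly and you justify via Fa\`a di Bruno), integration of that bound to obtain $W^{d+1,1}$, and an integration by parts on $\{|\xi|>\eps\}$ to exclude mass concentrating at the origin. The first assertion is complete: for $1\le|\alpha|\le d+1$ the function being differentiated is $O(|\xi|^{3-|\alpha|})=o(|\xi|^{1-d})$ near $0$, so the boundary term on $\partial B(0,\eps)$ is $O(\eps^{d+2-|\alpha|})\to 0$, the distributional derivative coincides with the pointwise one, and the latter is integrable.

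For the second assertion you stop exactly where the real difficulty begins, and that is a genuine gap, not a routine verification. With $|\beta|=d+1$ the boundary term is $\int_{\partial B(0,\eps)}D^\beta\hat{K}_\phi\,\varphi\, n_j\,d\sigma$, and the bound $|D^\beta\hat{K}_\phi|\le C|\xi|^{1-d}$ only gives $O(\eps^{d-1}\cdot\eps^{1-d})=O(1)$: this excludes derivatives of $\delta_0$ but not $\delta_0$ itself. (The paper's own estimate $C\eps^{d-1-n}$ degenerates in exactly the same way at the value $n=d-1$ needed for this step, so you cannot simply defer to it.) Moreover the limit of the boundary term exists and can be identified: writing $\hat{K}_\phi=e^\rho$ with $\rho=-4\pi^2\phi^o(\xi)^2$, the only Fa\`a di Bruno term exactly homogeneous of degree $1-d$ is $e^\rho D^\beta\rho$, the others being $O(|\xi|^{3-d})$, so the coefficient of the potential Dirac mass is $\varphi(0)\int_{\S^{d-1}}D^\beta\rho(\theta)\,\theta_j\,d\H^{d-1}(\theta)$. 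Nothing in your argument shows this spherical integral vanishes. Parity closes the gap when $d$ is odd: $\rho$ is even, so $D^\beta\rho$ has parity $(-1)^{d+1}$ and $D^\beta\rho(\theta)\,\theta_j$ is then odd on $\S^{d-1}$. For $d$ even the integrand is even and there is no such cancellation; for instance in $d=2$, perturbing the Euclidean case by a quartic harmonic, $(\phi^o)^2=|\xi|^2+t\,|\xi|^2\cos(4\theta)$, one computes $\int_{\S^{1}}\partial_1^3[(\phi^o)^2]\,\theta_1\,d\H^{1}=6\pi t\neq 0$. So the ``borderline analysis'' you flag is not a technicality: completing the proof requires either showing that this spherical integral vanishes for the anisotropies considered or exhibiting another cancellation mechanism, and without it the claim that $D^{d+2}\hat{K}_\phi$ carries no singular part at the origin remains unproved.
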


\begin{proof}
First, we claim that the Hessian of $\hat{K_{\phi}}$ is a regular distribution since
$$ D \hat{K_{\phi}}(\xi) =  -8 \pi^2 \phi_{\xi}^o(\xi) \phi^o(\xi) e^{-4 \pi^2 \phi^o(\xi)^2},$$
and 
\begin{eqnarray*}
  D^2 \hat{K_{\phi}}(\xi) &=&  64 \pi^4  \phi^o(\xi)^2 \left(  \phi_{\xi}^o(\xi)\otimes  \phi_{\xi}^o(\xi) \right) e^{-4 \pi^2 \phi^o(\xi)^2} \\
 &~& \quad \quad- 8 \pi^2  \left(  \phi^o(\xi) \phi_{\xi \xi}^o(\xi)  +  \phi_{\xi}^o(\xi)\otimes  \phi_{\xi}^o(\xi) \right) e^{-4 \pi^2 \phi^o(\xi)^2}.
\end{eqnarray*}
We note that $\phi^o_\xi$ is discontinuous at $\xi = 0$.
Nevertheless, we next prove that the $d-1^{th}$ derivative of  $D^2 \hat{K_{\phi}}$ is a regular distribution,  without Dirac mass at $\xi=0$. 
Assume that $f = D^{n+2}  \hat{K_{\phi}}$ is an integrable function on $\R^d$ 
for some integer $n<d$. 
The homogeneity of $\phi^o$ shows the existence of a constant  $C_n$ such that 
$$ | D^{n+2}  \hat{K_{\phi}}| \leq C_n  \frac{1}{|\xi|^n} e^{- \lambda |\xi|^2}, \quad \text{for all } \xi \in \R^d \setminus \set{0}.$$
Since $f$ is smooth away from $\xi=0$,
the distributional  derivative  of $f$  is the sum of a regular function and of possibly
a Dirac mass at $\xi = 0 $ : 
$$ 
D f   =    \left\{  \nabla f \right\}  + c~\delta,
$$
where $c$ is a constant and $\nabla f$ denotes the pointwise derivative of $f$. 
Let  $\varphi \in \D(\R^d)^{d^{n+2}}$  and let $\epsilon > 0$. Then 
\begin{eqnarray*}
 \Scal{Df}{ \varphi} &=& -\Scal{f}{ \div \varphi } = - \int_{\R^d} f. \div \varphi dx \\
  &=& - \int_{\R^d \setminus B(0, \epsilon)}   f.\div \varphi dx -  \int_{B(0, \epsilon)}   f.\div \varphi dx \\
  &=&  \int_{\R^d \setminus B(0, \epsilon)}   \nabla f.\varphi dx - \int_{\partial  B(0, \epsilon) }  f.(\varphi.\vec{n}) d\sigma -   \int_{B(0, \epsilon)}   f.\div \varphi dx,
\end{eqnarray*}
Since we assumed that $f \in L^1(\R^d)^{d^{n+2}}$, the last integral above tends to $0$,
as $\e \rightarrow 0$.
Moreover  as  $n < d$, we have 
\begin{eqnarray*}
 \left| \int_{\partial  B(0, \epsilon) }  f ~ \varphi.\vec{n} d\sigma \right| &\leq&    \| \varphi \|_{L^{\infty}} \int_{\partial B(0, \epsilon)} C_n  \frac{1}{|\xi|^n} e^{- \lambda |\xi|^2} d\sigma  \\ &\leq&   \| \varphi \|_{L^{\infty}} C_n  \int_{\partial B(0, \epsilon)} \epsilon^{-n} d\sigma \leq C_n \| \varphi \|_{L^{\infty}} \eps^{d-1-n},  
 \end{eqnarray*}
 so that 
 $$ \lim_{\epsilon \to 0}  \left| \int_{\partial  B(0, \epsilon) }  f ~ \varphi.\vec{n} d\sigma  \right| = 0.$$
 It follows that $c=0$, which concludes the proof.
\end{proof}


\begin{prop}[Decay properties of $K_{\phi}$]~\\ \label{prop_decrease}
Let $s \in [0,1[$. 
There exists a constant $C_{\phi^o,s}$, which only depends on the anisotropy $\phi^o$ and on $s$,  
such that
\begin{eqnarray} \label{estim_Kphi}
|K_{\phi}(x)| &\leq& \frac{C_{\phi^o,s}}{1 + |x|^{d+1+s}}, \quad \forall x \in \R^d. 
\end{eqnarray}
\end{prop}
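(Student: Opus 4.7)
The plan is to combine a trivial sup bound on a bounded neighborhood of the origin with an integration-by-parts argument at infinity, carefully managing the singularity of the derivatives of $\hat K_\phi$ at $\xi=0$ through a dyadic splitting at scale $1/|x|$.

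For $|x|\le 1$, since $\hat K_\phi(\xi)=e^{-4\pi^2\phi^o(\xi)^2}$ belongs to $L^1(\R^d)$ thanks to the lower bound $\phi^o(\xi)\ge \lambda|\xi|$, I immediately get $\|K_\phi\|_\infty\le\|\hat K_\phi\|_1<\infty$, which is enough to bound the right-hand side of \eqref{estim_Kphi} up to constants depending on $\phi^o$ and $s$. So the real work is in the regime $|x|\ge 1$.

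For $|x|\ge 1$, I would fix a coordinate direction $j$ with $|x_j|\ge |x|/\sqrt{d}$ and integrate by parts $d+1$ times in $\xi_j$ in the oscillatory integral representing $K_\phi(x)$. Using the homogeneity bound obtained in Proposition \ref{prop_regularite}, namely $|\partial_j^{k}\hat K_\phi(\xi)|\le C_k\,|\xi|^{2-k}e^{-\lambda|\xi|^2}$ for $k\ge 2$, the $(d+1)$-st derivative $\partial_j^{d+1}\hat K_\phi$ is in $L^1(\R^d)$ (its singularity at $0$ is only $|\xi|^{-(d-1)}$). This yields the baseline decay $|K_\phi(x)|\lesssim |x|^{-(d+1)}$, but without the extra $|x|^{-s}$ factor.

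To gain the missing $|x|^{-s}$, I would then split the resulting integral at the scale $|\xi|=1/|x|$. On the inner region $\{|\xi|\le 1/|x|\}$ the bound $|\xi|^{-(d-1)}$ integrates to $C/|x|$, contributing a term of order $|x|^{-(d+2)}$, which is stronger than what is needed. On the outer region $\{|\xi|>1/|x|\}$ I would perform one further integration by parts in $\xi_j$: the boundary term on the sphere $\partial B(0,1/|x|)$ is easily bounded by $O(1)$ (the pointwise bound $|\xi|^{-(d-1)}$ times the surface area $|\xi|^{d-1}$), giving an $|x|^{-(d+2)}$ contribution after dividing by $x_j$; the remaining volume integral involves $|\partial_j^{d+2}\hat K_\phi|\lesssim |\xi|^{-d}e^{-\lambda|\xi|^2}$, which integrated over $|\xi|\ge 1/|x|$ produces at worst a logarithm, $C\log|x|$. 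Altogether this yields $|K_\phi(x)|\le C\,|x|^{-(d+2)}\log|x|$ for $|x|\ge 1$, which absorbs into the claimed bound $|x|^{-(d+1+s)}$ for every $s<1$ since $\log|x|\le C_s|x|^{1-s}$.

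The only delicate step is the bookkeeping of the boundary term and the $|\xi|^{-d}$ singularity of $\partial_j^{d+2}\hat K_\phi$ near the origin, which is precisely what forces the restriction $s<1$: one more derivative of $\hat K_\phi$ would produce a non-integrable $|\xi|^{-(d+1)}$ singularity and the splitting argument would break down. Apart from that, the Gaussian-type decay $e^{-\lambda|\xi|^2}$ at infinity ensures that no difficulty arises for large $|\xi|$.
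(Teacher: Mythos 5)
Your argument is correct, and it follows a genuinely different route from the paper. The paper does not integrate by parts in $x$-space at all: it sets $u=\triangle^{\frac{d+1}{2}}\hat K_\phi$ and shows, by an explicit estimate of the $K$-functional (cutting $u$ off near the origin at scale $t$ with a smooth truncation $\rho_t$), that $u$ lies in the real interpolation space between $L^1$ and $W^{1,1}$ with a $k(t,u)\lesssim t(1+|\ln t|)$ bound; since the Fourier transform maps $L^1\to L^\infty$ and $W^{1,1}\to L^\infty_{(1+|x|)}$, and since $[L^\infty,L^\infty_{(1+|x|)}]_{s,\infty}=L^\infty_{(1+|x|)^s}$ (Lemma~\ref{lemma:interpolation_L_infty}), it concludes that $(1+|x|^{d+1})(1+|x|)^sK_\phi$ is bounded. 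Your proof replaces this machinery by $d+1$ integrations by parts in the coordinate $\xi_j$ with $|x_j|\ge|x|/\sqrt d$, followed by a split at the scale $|\xi|=1/|x|$ and one further integration by parts on the outer region. Both proofs rest on exactly the same inputs from Proposition~\ref{prop_regularite} (the pointwise homogeneity bounds $|D^k\hat K_\phi|\lesssim|\xi|^{2-k}e^{-c|\xi|^2}$ and the absence of a singular part at $\xi=0$, which is what legitimizes integrating by parts across the origin), and your logarithmic loss on the outer annulus $1/|x|\le|\xi|\le 1$ is the exact counterpart of the $|\ln t|$ in the paper's $K$-functional estimate. What your version buys is that it is more elementary and self-contained (no interpolation theory), and it actually yields the slightly stronger endpoint bound $|K_\phi(x)|\lesssim|x|^{-(d+2)}\log|x|$ for large $|x|$, from which \eqref{estim_Kphi} follows for every $s<1$ simultaneously; what the paper's version buys is that the weighted-$L^\infty$ formulation plugs directly into the trace and dominated-convergence arguments used later (Corollary~\ref{cor_1} and Propositions~\ref{proposition:positivity}--\ref{prop:moment_order_2}). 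The only points you should make explicit in a final write-up are (i) the justification that the one-dimensional integrations by parts produce no contribution from $\xi=0$ (absolute continuity of $\xi_j\mapsto\partial_j^{k}\hat K_\phi$ on almost every line, which follows from $\hat K_\phi\in W^{d+1,1}$), and (ii) the sign conventions in the boundary term on $\partial B(0,1/|x|)$; neither causes any difficulty.
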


\begin{remark} The case $s = 0$ is easy: According to proposition~\ref{prop_regularite}, 
the function $\triangle^{\frac{d+1}{2}}\hat{K}_{\phi}(\xi)$ is in $L^1(\R^d)$. 
The continuity of the Fourier transform from $L^1$ to $L^{\infty}$ shows that  
$$ 
\| (1 + |x|^{d+1}) K_{\phi} \|_{L^{\infty}} \leq  
C \| \hat{K}_{\phi}(\xi) + \triangle^{\frac{d+1}{2}}\hat{K}_{\phi}(\xi) \|_{L^1(\R^d)},
$$
and since $\hat{K}_\phi(\xi) = e^{-4\pi^2 \phi^o(\xi)^2}$, 
$$ 
 |K_{\phi}(x)| \leq \frac{C_{\phi^o,0}}{1 + |x|^{d+1}}, \quad \forall x \in \R^d. 
$$
\end{remark}
 
The proof  uses properties of interpolation spaces~\cite{BerghLofstrom}. 
Consider $X$, $Y$ two Banach spaces, and for $u \in X + Y$ and $t \in \R^+$, let  
$$
k(t,u) = \inf_{u = u_0 + u_1} \left\{ \| u_0\|_{X} + t \|u_1\|_{Y}  \right\}.
$$
For $s \in [0,1]$ and $p \geq 1$, the interpolation space $[X,Y]_{s,p}$ beetween $X$ and $Y$ is
defined  by
$$
[X,Y]_{s,p} = \Set{u \in X + Y}{ t^{-s} K(t,u) \in L^p \left ( \frac{1}{t} \right)}.
$$
In particular, given a strictly positive function $h : \R^d \to \R$, consider the weighted space
$L^{\infty}_{h}$ defined by
$$
L^{\infty}_{h}(\R^d) = \Set{ u \in L^{\infty}(\R^d) }{ \sup_{x \in \R^d} \set{ h(x) u(x)} < \infty }.  $$
One can interpolate between $L^{\infty}$ and $L^{\infty}_h$ according to the following lemma.
\begin{lemma} \label{lemma:interpolation_L_infty}
Let $h$ be a strictly positive function $\R^d \to \R$, and let $s \in ]0,1[$.
Then
$$  
[L^{\infty}(\R^d),L_{h}^{\infty}(\R^d)]_{s,\infty} = L_{h^{s}}^{\infty}(\R^d) 
$$
\end{lemma}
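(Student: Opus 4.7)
The plan is to prove both inclusions by exploiting the fact that, for weighted $L^\infty$ spaces, the K-functional can be computed pointwise.

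First I would handle the easy direction $[L^\infty,L^\infty_h]_{s,\infty}\subset L^\infty_{h^s}$. For any admissible splitting $u=u_0+u_1$ and any point $x_0\in\R^d$, the pointwise bound
\[
|u(x_0)|\;\le\;|u_0(x_0)|+|u_1(x_0)|\;\le\;\|u_0\|_{L^\infty}+h(x_0)^{-1}\|u_1\|_{L^\infty_h}
\]
holds. Taking the infimum over all decompositions with parameter $t=h(x_0)^{-1}$ gives $|u(x_0)|\le k\bigl(h(x_0)^{-1},u\bigr)$. If $\sup_{t>0} t^{-s}k(t,u)=M<\infty$, we conclude $|u(x_0)|\le M\, h(x_0)^{-s}$, i.e. $h^s u\in L^\infty$.

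For the reverse inclusion $L^\infty_{h^s}\subset [L^\infty,L^\infty_h]_{s,\infty}$, I would use an explicit splitting at a level depending on $t$. Given $u$ with $\|h^s u\|_{L^\infty}\le M$, for each $t>0$ set $\alpha=\alpha(t)>0$ and
\[
u_1=u\,\chi_{\{h<\alpha\}},\qquad u_0=u\,\chi_{\{h\ge\alpha\}}.
\]
On $\{h\ge\alpha\}$ we have $|u|\le M h^{-s}\le M\alpha^{-s}$, and on $\{h<\alpha\}$ we have $h|u|\le M h^{1-s}\le M\alpha^{1-s}$, so
\[
k(t,u)\le M\alpha^{-s}+tM\alpha^{1-s}.
\]
Choosing $\alpha=1/t$ (the order of the optimizer) yields $k(t,u)\le 2M t^{s}$, hence $\sup_{t>0} t^{-s}k(t,u)<\infty$.

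No step looks genuinely hard: the only tiny care needed is that $h$ is merely assumed strictly positive (not bounded away from $0$ or $\infty$), but both arguments only use $h^{-1}$ and $h$ at a single point or in level-set form, so this causes no difficulty. The main conceptual point, which I would emphasize, is that weighted $L^\infty$ K-functionals decouple pointwise, so the interpolation identity reduces to the elementary inequality $\min(a,t b)$ being comparable to $a^{1-s}(tb)^s$ up to constants — applied with $a=h(x)^{-s}$ and $b=h(x)^{1-s}$.
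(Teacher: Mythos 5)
Your proof is correct and follows essentially the same route as the paper's: you get the inclusion $[L^\infty,L^\infty_h]_{s,\infty}\subset L^\infty_{h^s}$ by testing a near-optimal decomposition pointwise at $t=h(x)^{-1}$, and the converse by an explicit truncation whose threshold is then optimized against $t$. The only (cosmetic) difference is that you truncate on level sets of $h$ at level $\alpha=1/t$, whereas the paper truncates on level sets of $h|u|$ at level $t^{s-1}$; your choice has the small advantage of giving $k(t,u)\le 2Mt^s$ uniformly, without the paper's separate treatment of $t\ge 1$ and $t<1$.
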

\begin{proof} 1) 
Assume that $u \in L_{h^{s}}^{\infty}(\R^d)$. 
There exists  a constant $C$ such that for a.e. $x \in \R^d$,   
\begin{eqnarray} \label{estim_u}
|u(x)| \leq  \frac{C}{ h(x)^s }. 
\end{eqnarray}
To estimate 
$k(t,u) =  \inf_{u = u_0 + u_1} \left\{ \| u_0\|_{L^{\infty}} + t \|u_1\|_{L^{\infty}_{h}}  \right\}$, we note that
\begin{itemize}
\item 
If $t \geq 1$, the choice $u_0 = u$ and $u_1=0$, shows that 
$K(t,u) \leq \| u\|_{L^{\infty}}$.
\item 
If $t<1$, we consider the set $A = \Set{x \in \R^d}{ |u(x)|h(x) \leq t^{s-1} }$, 
and we choose $u_0 = \chi_{A^{c}}~u$ and $u_1 = \chi_{A}~u $, so that 
$\| u_1 \|_{L^{\infty}_h} \leq t^{s-1}$. 
Moreover, we remark that for all $x \in A^{c}$,
$ |u(x)| h(x) \geq t^{s-1}$ so that, in view of~(\ref{estim_u})
\begin{eqnarray*}
|u_0(x)| &\leq& C h(x)^{-s} \;\leq\; C |u_0(x)|^s t^{s(1-s)},
\end{eqnarray*} 
and thus $k(t,u) \leq (C+1)t^{s}$. 
\end{itemize}
In summary, these estimates show that  
$$
K(t,u) \leq \begin{cases}
             \| u\|_{L^{\infty}} & \text{ if } t \geq 1 \\
             (C + 1)t^{s}  &   \text{ if } t < 1, 
            \end{cases}
$$
which proves that $u \in [L^{\infty},L_{h}^{\infty}]_{s,\infty}$.
\\
2) Conversely, we consider $u \in [L^{\infty},L_{h}^{\infty}]_{s,\infty}$. 
For all $t>0$, there exists a decomposition  $u = u_{0,t} + u_{1,t}$ such that 
$$
|u_{0,t}|_{L^{\infty}} + t| u_{1,t}|_{L^{\infty}_{h}} \leq C t^{s}.  
$$ 
It follows that for all $t>0$, we have  
\begin{eqnarray*}
h(x)^s |u(x)| &\leq& 
\left|h(x)^s \, |u_{0,t}(x) + u_{1,t}(x) \right|  
\;\leq\;  C \left( h(x)^s  t^{s} +  h(x)^{s-1} t^{s-1} \right).  
\end{eqnarray*}
Choosing $t = h(x)^{-1}$ in the above inequality shows that for all $x \in \R^d$, \\
$h(x)^s \, |u(x)| \leq 2C$, which concludes the proof.
\end{proof}
\vspace*{5mm}


We use the following properties of interpolation spaces:
\begin{itemize}
 \item[$(P_1)$]   if $T$ is continous from $X \to \tilde{X}$ and from $Y \to \tilde{Y}$,  then $T$ is continous from $[X,Y]_{s,p}$  to $[\tilde{X},\tilde{Y}]_{s,p}$.
 \item[$(P_2)$] if $p<p'$, then  $[X,Y]_{s,p} \subset [X,Y]_{s,p'}$ \quad  for any $0< s< 1$ and  $p\geq 1 $.
 \item[$(P_3)$] $[L^{\infty}(\R^d),L^{\infty}_{(1+|x|)}(\R^d)]_{s,\infty} =  L^{\infty}_{(1+|x|)^s}(\R^d)$\quad  for any $0< s< 1$.
\end{itemize}
In the following, we consider the case where  $T$ is the Fourier transform, $X = L^1(\R^d)$, 
$Y = L^{\infty}(\R^d)$, $\tilde{X} = W^{1,1}(\R^d)$ and $\tilde{Y} =L^{\infty}_{(1+|x|)}(\R^d)$. 
\vspace*{5mm}

\begin{proof}[Proof of Proposition~\ref{prop_decrease}]
We claim that it suffices to show that for any  $0<s<1$
\begin{eqnarray} \label{claim_u}
u(\xi) &:=& \triangle^{\frac{d+1}{2}}\hat{K}_{\phi}(\xi) \;\in [X,Y]_{s,1}.
\end{eqnarray}
Indeed, the inclusion  $[X,Y]_{s,1} \subset [X,Y]_{s,\infty}$ implies then that
$u \in [X,Y]_{s,\infty}$, so that in view of $(P_1)$ and $(P_3)$ we obtain
$$ 
\hat{u} \in [\tilde{X},\tilde{Y}]_{s,\infty} 
= [L^{\infty}(\R^d),L^{\infty}_{(1+|x|)}(\R^d)]_{s,\infty} =  L^{\infty}_{(1+|x|)^s}(\R^d),
$$
and consequently
$$
|(1 + |x|^s) \hat{u}(x)| = |(1 + |x|^{d+1}) K_{\phi}(x) (1+|x|)^s| \leq C_{\phi^o,s}, 
\quad  \text{ for all } x \in \R^d.
$$
It follows that for some constant $C_{\phi^o,s}$
$$ |K_{\phi}(x)| \leq \frac{C_{\phi^o,s}}{1 + |x|^{d+1+s}}, \quad  \text{for all } x \in \R^d.$$
\medskip

We now prove~(\ref{claim_u}). 
The homogeneity  of $\phi^o$ shows that for some $c_1>0$ and $c_2>0$, 
and for $\xi \in \R^d \setminus \set{0}$,
\begin{eqnarray*}
| u(\xi)| \leq  \frac{c_1}{|\xi|^{d-1}} e^{-\lambda|\xi|^2} 
&\quad \textrm{and} \quad&
|\nabla u(\xi)| \leq  \frac{c_2}{|\xi|^{d}} e^{-\lambda |\xi|^2},
\end{eqnarray*}
which shows that $u \in X = L^1(\R^d)$. 
However, $u$ may not belong to $Y = L^\infty(R^d)$.  
We now estimate $k(u,t)$, for $t \in \R^+$. 
If $t \geq 1 $,  we set $u_0 =u$, $u_1=0$, so that 
\begin{eqnarray} \label{estimation_K_phi_t_1}
k(t,u) \leq \| u \|_X, \quad \forall t \geq 1.
\end{eqnarray}
If $t < 1$, consider the function $\rho_t(\xi)$ defined by 
$$ \rho_t(\xi) = 
\begin{cases} 
0 & \text{if} \quad |x| \leq t \\
1 & \text{if} \quad |x| > 2 t \\
\sin\left( \frac{\pi}{2} \frac{ |\xi| - t }{t} \right) & \text{otherwise}. 
\end{cases}
$$ 
We choose $ u_0 = (1 - \rho_t) u$ and $u_1 =  \rho_t u$ and check that
\begin{eqnarray*}
|u_0|_{L^1(\R^d)}  &\leq& 
\int_{B(0,2t)} |u(\xi)|  d \xi \leq  \int_{B(0,2t)}\frac{C}{|\xi|^{d-1}} d\xi 
\leq 2 C  |\mathbf{S}^d|  t.
\end{eqnarray*}
Moreover, 
\begin{eqnarray*}
\| \nabla u_1 \|_{L^1(\R^d)} 
&\leq&   
\| \nabla \rho_t  u +   \rho_t  \nabla  u \|_{L^1(\R^d)} 
\\
&\leq& 
\int_{R^d \setminus B(0,t)}  |\nabla \rho_t| u(\xi) d\xi 
+ \int_{R^d \setminus B(0,t)}  |\nabla u(\xi)| d\xi 
\\
&\leq&  
\frac{\pi}{2 t } \int_{  B(0,2t)\setminus B(0,t)}  
\frac{C}{|\xi|^{d-1}} e^{-\lambda|\xi|^2}  d\xi 
+ \int_{R^d \setminus B(0,t)}  \frac{C}{|\xi|^{d}}e^{-\lambda|\xi|^2} d\xi.
\end{eqnarray*}
First, we have
\begin{eqnarray*}
\frac{\pi}{2 t }  \int_{ B(0,2t)\setminus B(0,t)}   \frac{C}{|\xi|^{d-1}} e^{-\lambda|\xi|^2} d\xi 
&\leq& 
\frac{ C \pi}{2 t } |\mathbf{S}^d| \int_{t}^{2t}  dr \leq  \frac{ |\mathbf{S}^d| C \pi}{2 }.
\end{eqnarray*}
Second,
\begin{eqnarray*}
\int_{R^d \setminus B(0,t)}  \frac{C}{|\xi|^{d}}e^{-\lambda|\xi|^2} d\xi 
&\leq&   
\int_{B(0,1) \setminus B(0,t)} 
\frac{C}{|\xi|^{d}}e^{-\lambda|\xi|^2} d\xi + 
\int_{\R^d \setminus B(0,1)}  
\frac{C}{|\xi|^{d}}e^{-\lambda|\xi|^2} d\xi 
\\
&\leq&  
C  |\mathbf{S}^d| \int_{t}^{1} \frac{1}{r} dr 
+ C  |\mathbf{S}^d| \int_{1}^{\infty} e^{-\lambda r^2 } dr 
\\
&\leq&  
C  |\mathbf{S}^d| \left( |\ln(t)| +   \frac{1}{\sqrt{\lambda}} \frac{\sqrt{\pi}}{2} \right),
\end{eqnarray*}
so that
\begin{eqnarray*}
\|  u_1 \|_{Y} 
&\leq& C \left[
|\mathbf{S}^d| \left( \frac{\pi}{2 } 
+ \frac{1}{\sqrt{\lambda}} \frac{\sqrt{\pi}}{2} + |\ln(t)| \right)
\right].
\end{eqnarray*}
Consequently, this decomposition of $u$ shows that 
\begin{eqnarray} \label{estimation_K_phi_t_2}
k(u,t) \leq C(1 +  |\ln(t)| ) t , \quad \forall t< 1,
\end{eqnarray}
for some constant $C > 0$.
In summary, 
$$
k(u,t) \leq 
\begin{cases}
\| u \|_X & \text{if} \quad  t \geq 1 \\
C( 1 +  |\ln(t)| )  t  &  \text{if} \quad t < 1, 
\end{cases}
$$
and therefore we obtain
\begin{eqnarray*}
\|t^{-s} \, k(t,u)\|^1_{L^1(1/t)} 
&=& 
\int_{\R^+} |k(t,u) t^{-s}|  \frac{1}{t}  dt 
\\
&\leq& 
\int_{0}^1 \frac{(C_0 + C_1 |\ln(t)|  )}{t^{ s } } dt 
+ \int_{1}^\infty  \frac{\| u \|^1_X}{t^{1 + s}}dt  < + \infty,
\end{eqnarray*}
which proves that $u \in [X,Y]_{s,1}$ as claimed.
\end{proof} 
\vspace*{5mm}

\begin{corollary} \label{cor_1}
For any $s \in [0,1[$ and $p \in \mathbf{S}^d$,  
\begin{eqnarray*}
|x|^{1+s} K_{\phi} \in L^1(\R^d),
&\quad
\left(  K_{\phi} \right)_{| p^{\perp}} \in  L^1(\R^{d-1}),
\quad &
\left( x \otimes x K_{\phi} \right)_{| p^{\perp}} \in  L^1(\R^{d-1}). 
\end{eqnarray*}
\end{corollary}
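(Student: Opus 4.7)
The plan is to deduce all three integrability statements directly from the pointwise decay bound $|K_\phi(x)| \leq C_{\phi^o,s}/(1+|x|^{d+1+s})$ given by Proposition~\ref{prop_decrease}, combined with a polar-coordinates computation in the relevant dimension. The key observation is that Proposition~\ref{prop_decrease} is available for \emph{every} $s \in [0,1[$, so whenever we need a little bit of extra decay we can apply it with a slightly larger exponent.

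First, for $|x|^{1+s} K_\phi \in L^1(\R^d)$ with a given $s \in [0,1[$, I would pick an auxiliary exponent $s' \in (s,1)$ and use Proposition~\ref{prop_decrease} with $s'$ in place of $s$. The resulting bound
\[
|x|^{1+s}|K_\phi(x)| \;\leq\; \frac{C_{\phi^o,s'}\,|x|^{1+s}}{1+|x|^{d+1+s'}}
\]
is locally bounded near the origin and behaves like $|x|^{s-s'-d}$ at infinity; integrating over $\R^d$ in polar coordinates produces the tail $\int^\infty r^{s-s'-1}\,dr$, which converges because $s-s'<0$.

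Second, for $(K_\phi)_{|p^\perp} \in L^1(\R^{d-1})$, I would use the bound already with $s=0$: on the hyperplane $p^\perp$, parametrized by $(d-1)$-dimensional polar coordinates, the estimate $|K_\phi(x)|\lesssim (1+|x|^{d+1})^{-1}$ gives the tail $\int^\infty r^{d-2}/r^{d+1}\,dr = \int^\infty r^{-3}\,dr$, clearly finite.

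Third, for $(x\otimes x\, K_\phi)_{|p^\perp} \in L^1(\R^{d-1})$, I would pick any $s\in (0,1[$ and apply Proposition~\ref{prop_decrease}; on $p^\perp$ the integrand has magnitude controlled by $|x|^2/(1+|x|^{d+1+s})$, whose radial tail integrates as $\int^\infty r^{d-2}\cdot r^{2-(d+1+s)}\,dr = \int^\infty r^{-1-s}\,dr < \infty$. Near the origin the integrand is bounded since $K_\phi$ is continuous.

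There is no real obstacle here; the only point requiring a bit of care is the first item, where the exponent $1+s$ forces one to spend a bit of the decay, so one has to invoke Proposition~\ref{prop_decrease} with a larger exponent $s'>s$ rather than with $s$ itself.
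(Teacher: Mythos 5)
Your proof is correct and follows exactly the route the paper intends: the corollary is stated without proof as an immediate consequence of the decay bound in Proposition~\ref{prop_decrease}, and your polar-coordinate computations (including the necessary choice of a strictly larger exponent $s'>s$ for the first item and a strictly positive $s$ for the third) fill in precisely the routine details the authors omitted.
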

\vspace*{5mm}

\begin{prop}[Decay of averages of $K_{\phi}$ on spheres]~\\ 
\label{prop_mean_decrease}
The integral 
$$I(R) = \int_{\partial B(0,R)} K_{\phi} d \H^{d-1},$$
is stricly positive, and decays rapidly as  
$$  
\frac{R^{d-1} |S^{d-1}|  }{ (4\pi)^{d/2} \Lambda^d} e^{- \frac{R^2}{ 4 \Lambda^2 }}  
\leq I(R) \leq 
\frac{R^{d-1} |S^{d-1}|  }{ (4\pi)^{d/2} \lambda^d} e^{- \frac{ R^2}{4 \lambda^2 }}
$$
where $\lambda$ and $\Lambda$ are bounds for $\phi^o$ as in~(\ref{bounds_phi^o}).
\end{prop}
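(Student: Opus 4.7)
The plan is to convert $I(R)$ into an explicit, manifestly positive sphere integral, after which both positivity and the quantitative bounds fall out by an elementary pointwise comparison on $\S^{d-1}$. Since $K_\phi \in L^1(\R^d)$ by Proposition~\ref{prop_decrease}, Fourier inversion and Fubini allow me to swap the Fourier integral defining $K_\phi$ with the surface integral over $\partial B(0,R)$, giving
$$
I(R) \;=\; R^{d-1} \int_{\R^d} e^{-4\pi^2 \phi^o(\xi)^2} \biggl[ \int_{\S^{d-1}} e^{2\pi i R \omega\cdot\xi} \, d\omega \biggr] d\xi,
$$
where the inner bracket is a radial function of $R|\xi|$ only. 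Passing then to polar coordinates $\xi = \rho\eta$ with $\rho > 0$ and $\eta \in \S^{d-1}$, the one-homogeneity $\phi^o(\rho\eta) = \rho\,\phi^o(\eta)$ is the key structural fact: after the substitution $u = \rho\,\phi^o(\eta)$, the radial Gaussian $e^{-4\pi^2 u^2}$ decouples completely from the angular variable $\eta$.

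To eliminate the remaining oscillatory Bessel-type factor coming from the inner sphere integral over $\omega$, I would run the identical computation in the isotropic case $\phi^o(\xi) = |\xi|$, where $K_\phi$ reduces to the classical Gaussian $(4\pi)^{-d/2}e^{-|x|^2/4}$ and $I(R)$ can be evaluated directly in closed form. Equating the two expressions produces exactly the radial identity needed, and yields the closed-form representation
$$
I(R) \;=\; \frac{R^{d-1}}{(4\pi)^{d/2}} \int_{\S^{d-1}} \frac{1}{\phi^o(\eta)^d}\, e^{-R^2/(4\phi^o(\eta)^2)} \, d\eta.
$$

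With this identity in hand, positivity of $I(R)$ is immediate since the integrand is strictly positive on $\S^{d-1}$. The two-sided bounds then follow by controlling each of the two factors $\phi^o(\eta)^{-d}$ and $e^{-R^2/(4\phi^o(\eta)^2)}$ separately using $\lambda \le \phi^o(\eta) \le \Lambda$, picking the relevant endpoint in each case; the resulting constants are precisely the values of $I(R)$ obtained for the isotropic surface tensions $\phi^o = \lambda|\cdot|$ and $\phi^o = \Lambda|\cdot|$, which are exactly what appears on the two sides of the claimed inequality. The only genuinely non-routine step is the elimination of the inner sphere integral by reduction to the isotropic model case; this reduction works precisely because the one-homogeneity of $\phi^o$ lets the isotropic computation be reused as an auxiliary identity, and everything else is a pointwise estimate on the unit sphere.
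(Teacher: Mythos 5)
Your derivation of the closed form for $I(R)$ is exactly the paper's argument: pair $K_\phi$ with the surface measure of $\partial B(0,R)$ via Plancherel/Fubini, observe that the Fourier transform of that measure is radial of the form $R^{d-1}J(R|\xi|)$, pass to polar coordinates and use the $1$-homogeneity of $\phi^o$, and calibrate the unknown radial integral against the isotropic case where $K_\phi$ is the explicit Gaussian. This part is correct and is the same route as the paper, and positivity of $I(R)$ does follow immediately from the resulting formula
$I(R)=\frac{R^{d-1}}{(4\pi)^{d/2}}\int_{\S^{d-1}}\phi^o(\eta)^{-d}e^{-R^2/(4\phi^o(\eta)^2)}\,d\eta$.

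The gap is in your last step. The two factors $\phi^o(\eta)^{-d}$ and $e^{-R^2/(4\phi^o(\eta)^2)}$ are monotone in \emph{opposite} directions as functions of $\phi^o(\eta)$: the first decreases, the second increases. So ``picking the relevant endpoint in each case'' gives
$\frac{R^{d-1}|\S^{d-1}|}{(4\pi)^{d/2}\Lambda^d}\,e^{-R^2/(4\lambda^2)}\le I(R)\le\frac{R^{d-1}|\S^{d-1}|}{(4\pi)^{d/2}\lambda^d}\,e^{-R^2/(4\Lambda^2)}$,
which is \emph{not} what the proposition asserts: the stated bounds pair $\lambda^{-d}$ with $e^{-R^2/(4\lambda^2)}$ and $\Lambda^{-d}$ with $e^{-R^2/(4\Lambda^2)}$, i.e.\ they sandwich $I(R)$ between the two isotropic values $g(\Lambda)$ and $g(\lambda)$ of $g(t)=t^{-d}e^{-R^2/(4t^2)}$. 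That sandwich would require $g$ to be monotone decreasing on $[\lambda,\Lambda]$, but $g'(t)$ changes sign at $t=R/\sqrt{2d}$, so $g$ is increasing on all of $[\lambda,\Lambda]$ once $R>\Lambda\sqrt{2d}$; for a genuinely anisotropic $\phi^o$ the true large-$R$ decay of $I(R)$ is governed by $e^{-R^2/(4\max_{\S^{d-1}}\phi^o)^{\,2\,-1}\cdot\ldots}$, more precisely by $e^{-R^2/(4(\max\phi^o)^2)}$, which is much larger than the claimed upper bound $Ce^{-R^2/(4\lambda^2)}$. So your assertion that the separate endpoint bounds produce ``exactly what appears on the two sides of the claimed inequality'' is false. (To be fair, the paper's own proof ends with the same unjustified appeal to $\lambda\le\phi^o\le\Lambda$, and the inequality as printed in the proposition appears to have the exponentials attached to the wrong endpoints; the honestly provable two-sided bound is the crossed one displayed above.)
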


\begin{proof}

Since the measure $\mu := \delta_{\partial B(0,R)}$ has finite mass, its Fourier transform
is the continuous and bounded function
\begin{eqnarray*}
\hat{\mu}(\xi) &=& 
 \ds\int_{\R^d} e^{- 2\pi i x \cdot \xi} d\mu
\;=\;  \ds\int_{\partial B(0,R)} e^{- 2 \pi i x \cdot \xi}.
\end{eqnarray*}
As $\mu$ is radially symmetric, $\hat{\mu}$ can be expressed in the form
\begin{eqnarray*} 
\hat{\mu}(\xi) &=& R^{d-1} J(R |\xi|),
\end{eqnarray*}
where $J$ is a function $\R^+ \to \R$.
It follows that    
\begin{eqnarray} \label{expression_IR}
I(R) &=&  \Scal{\delta_{\partial B(0,R)}}{ K_{\phi}} 
=  \Scal{  R^{d-1} J(R|\xi|) }{ e^{-4\pi^2 \phi^o(\xi)^2}} 
\nonumber \\
&=&  
R^{d-1} \int_{S^{d-1}} \int_0^{+\infty} 
r^{d-1} J(R r) e^{-4\pi^2 \phi^o(\theta)^2 r^2} dr d \H^{d-1}.
\end{eqnarray}
We use the particular case when $\phi^o(\xi)$ is isotropic, i.e., 
$\phi^o(\xi) = |\xi|$ to estimate the previous integral. 
In this case, $K_\phi = \frac{1}{(4\pi)^{d/2}} e^{- \frac{x^2}{4}}$ is the heat kernel,
and by a direct calculation we see that the corresponding integral is
$I(R) =  < \delta_{B(0,R)}, K_\phi> =
\frac{R^{d-1} |S^{d-1}|}{ (4\pi)^{d/2}} e^{- \frac{R^2}{4}}$. 
Comparing this expression to~(\ref{expression_IR}) and using 
the radial symmetry of $K_{\phi}$ shows that  
\begin{eqnarray*}
\int_{0}^{+\infty} r^{d-1}  J(R r) e^{-4\pi^2  r^2} dr 
&=& 
\frac{1}{(4\pi)^{d/2}} e^{- \frac{R^2}{4}},
\end{eqnarray*}
or, after a change of variable, that
\begin{eqnarray} \label{intJ}
 \int_{0}^{+\infty} r  J(Rr) e^{-4\pi^2 \phi^o(\theta)^2 r^2} dr = \frac{1}{ (4\pi)^{d/2} \phi^o(\theta)^d } e^{- \frac{R^2}{4 \phi^o(\theta)^2}}.
\end{eqnarray}

Returning to a general kernel $K_\phi$, 
we deduce from~(\ref{expression_IR}) and~(\ref{intJ}) that
\begin{eqnarray*}
I(R) &=& \frac{R^{d-1}}{(4\pi)^{d/2} } \int_{S^{d-1}} 
\frac{1}{\phi^o(\theta)^d }  e^{- \frac{R^2}{4 \phi^o(\theta)^2}} d\H^{d-1},  
\end{eqnarray*}
which  in view of~(\ref{bounds_phi^o}) concludes the proof.
\end{proof}
\vspace*{5mm}

\begin{prop}[Positivity on hyperplanes]~\\ 
\label{proposition:positivity}
For all $p \in \S^{d}$, the integral  $ \int_{p^{\perp}} K_{\phi}  d\H^{d-1}$ 
is well defined, and satisfies
$$ 
\int_{p^{\perp}} K_{\phi}  d\H^{d-1}  
=  \frac{1}{2 \sqrt{\pi} \phi^o(p)} .
$$
In particular, we have
$$
\frac{1}{2 \sqrt{\pi} \Lambda}  \leq  
\int_{p^{\perp}} K_{\phi}  d\H^{d-1} \leq \frac{1}{2 \sqrt{\pi} \lambda}.
$$
\end{prop}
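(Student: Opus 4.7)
The approach is to invoke the Fourier projection--slice (a.k.a.\ Fourier slice) identity, which relates the integral of a function along the hyperplane $p^\perp$ to the one-dimensional Fourier transform of $\hat K_\phi$ restricted to the line $\R p$. The payoff is that along that line $\hat K_\phi$ is a Gaussian whose variance scales by $\phi^o(p)^2$, and so the computation reduces to a one-dimensional Gaussian integral.

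Concretely, I would argue as follows. First, write $\R^d = p^\perp \oplus \R p$, and set $g(\tau) = \int_{p^\perp} K_\phi(y + \tau p)\,d\H^{d-1}(y)$. Corollary~\ref{cor_1} together with the decay of $K_\phi$ (Proposition~\ref{prop_decrease}) guarantees that $g$ is well defined, bounded and integrable on $\R$, so that $g(0)$ is exactly the quantity we want. By Fubini applied to the Fourier inversion formula $K_\phi(x) = \int_{\R^d} e^{-4\pi^2\phi^o(\xi)^2} e^{2\pi i x\cdot\xi}\,d\xi$ (valid since $\hat K_\phi\in L^1$), one obtains the slice identity
\[
\widehat{g}(\tau) \;=\; \hat K_\phi(\tau p) \;=\; e^{-4\pi^2\phi^o(p)^2 \tau^2},
\]
where in the last equality I used the $1$-homogeneity of $\phi^o$. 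Applying the inverse Fourier transform on $\R$ at the point $0$ gives
\[
\int_{p^\perp} K_\phi\,d\H^{d-1} \;=\; g(0) \;=\; \int_{\R} e^{-4\pi^2\phi^o(p)^2 \tau^2}\,d\tau \;=\; \frac{1}{2\sqrt{\pi}\,\phi^o(p)},
\]
by the standard Gaussian integral $\int_\R e^{-a\tau^2}d\tau = \sqrt{\pi/a}$ with $a = 4\pi^2\phi^o(p)^2$. The claimed two-sided bound then follows at once from $\lambda \leq \phi^o(p) \leq \Lambda$ on $\S^{d-1}$ given by~(\ref{bounds_phi^o}).

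The only delicate point is the rigorous justification of the slice identity, i.e.\ of the exchange of integrations in
\[
\int_{p^\perp}\!\!\int_{\R^d} e^{-4\pi^2\phi^o(\xi)^2}e^{2\pi i y\cdot\xi}\,d\xi\,d\H^{d-1}(y) \;=\; \int_\R e^{-4\pi^2 \phi^o(\tau p)^2}\,d\tau.
\]
Since the hyperplane measure on $p^\perp$ is infinite, Fubini does not apply directly. I would circumvent this either by a small perturbation argument, replacing $\hat K_\phi(\xi)$ with $\hat K_\phi(\xi) e^{-\eps|\xi|^2}$ (where Fubini applies and both sides are continuous in $\eps$, with the limit $\eps\to 0^+$ controlled by Corollary~\ref{cor_1}), or equivalently by testing the tempered distribution $\delta_{p^\perp}$ against the Schwartz function $\hat K_\phi$ and using that the Fourier transform of $\delta_{p^\perp}$ is (a constant multiple of) the line measure $\delta_{\R p}$. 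Either way, the identity is established and the proof concludes as above.
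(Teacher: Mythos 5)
Your argument is essentially the paper's: both reduce the hyperplane integral, via the Fourier projection--slice identity, to the one-dimensional Gaussian $\int_\R e^{-4\pi^2\phi^o(p)^2\tau^2}\,d\tau = \frac{1}{2\sqrt\pi\,\phi^o(p)}$, and both justify the slice identity by regularizing and passing to the limit (the paper multiplies $K_\phi$ by a Gaussian in physical space and controls the Fourier-side limit with a trace theorem in $W^{d-1,1}$, while you place the Gaussian cutoff on the Fourier side --- the dual form of the same device, with the burden of the limit shifted to the physical side, where Proposition~\ref{prop_decrease} and Corollary~\ref{cor_1} supply the needed domination). One caveat on your closing remark: $\hat K_\phi(\xi) = e^{-4\pi^2\phi^o(\xi)^2}$ is \emph{not} a Schwartz function in general, since $\phi^o(\xi)^2$ fails to be smooth at $\xi=0$ unless $\phi^o$ is a quadratic norm (this is precisely why Proposition~\ref{prop_regularite} only establishes $W^{d+1,1}$ regularity), so the ``equivalently'' shortcut of pairing $\delta_{p^\perp}$ directly with $K_\phi$ does not stand as stated and the regularization route is the one to keep.
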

\begin{proof}
Let $p \in \S^{d}$. We already know from Corollary~\ref{cor_1} that 
$ \int_{p^{\perp}} K_{\phi}  d\H^{d-1}$ is well defined. 
Consider for $\mu > 0$, the approximating functions $f_{\mu}$, defined by 
$$
\begin{cases}
f_{\mu}(x) = K_{\phi}(x) e^{-\pi|x|^{2}/\mu^2},
\\
\hat{f}_{\mu}(\xi) = e^{-4 \pi^2 \phi^o(\xi)^2}*\frac{1}{\mu^2}e^{-\pi \mu^2|\xi|^2}.
\end{cases}
$$
The function $ f_{\mu}$ belongs to the Schwartz space ${\cal S}(\R^d)$.
Moreover, $\hat{f}_{\mu} \to \hat{K_{\phi}}$ in $W^{d-1,1}(\R^d)$, and
the trace trace  theorem~\cite{Grisvard} shows that one also has
\begin{eqnarray} \label{eqn:lim1}
\lim_{\mu \to \infty} \int_{\R} \hat{f}_\mu(s p) ds &=& \int_{\R} \hat{K_{\phi}}(s p) ds.
\end{eqnarray} 
On the other hand, it follows from the Lebesgue dominated convergence theorem 
and from~(\ref{estim_Kphi}) that 
\begin{eqnarray} \label{eqn:lim2}
\lim_{\mu \to \infty} \int_{p^{\perp}} f_{\mu} d\H^{d-1} &=& \int_{p^{\perp}} K_{\phi} d\H^{d-1}.
\end{eqnarray}
As  $f_{\mu} \in {\cal S}(\R^d)$, we infer that 
\begin{eqnarray*}
\int_{p^{\perp}} f_{\mu} d\H^{d-1} = \Scal{\delta_{p^{\perp}}}{f_{\mu}} 
= \Scal{\delta_{p}}{\F\left[ f_{\mu} \right]} = \int_{\R}  \hat{f}_{\mu}(sp) ds. 
\end{eqnarray*}
so that  (\ref{eqn:lim1}) and (\ref{eqn:lim2}) yield
\begin{eqnarray*}
\int_{p^{\perp}} K_{\phi}d\H^{d-1}  
&=& \int_{\R} \hat{K_{\phi}}(sp)  \, ds  
=  \int_{\R}  e^{-4\pi^2 s^2 \phi^o(p)^2} \, ds 
\\ 
&=&  
\int_{\R} e^{- \pi \left( 2 \sqrt{\pi}  \phi^o(p) s \right)^2 } \, ds  
= \frac{1}{ 2 \sqrt{\pi} \phi^o(p)},
\end{eqnarray*}
which concludes the proof.
\end{proof}
\vspace*{5mm}

\begin{prop}[Moments of order $2$]~\\ 
\label{prop:moment_order_2}
Let $p \in \S^{d}$. Then $ \frac{1}{2}  \int_{p^{\perp}}x \otimes x K_{\phi}  d\H^{d-1}$ 
is well defined and satisfies
$$  
\frac{1}{2}  \int_{p^{\perp}} x \otimes x K_{\phi}  d\H^{d-1} 
=   \phi^o_{\xi \xi}(p)  \frac{1}{2 \sqrt{\pi}}. 
$$
\end{prop}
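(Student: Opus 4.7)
My plan is to mirror the proof of Proposition~\ref{proposition:positivity} (positivity on hyperplanes), replacing the identity that converts the surface integral of $f$ on $p^{\perp}$ into a line integral of $\hat{f}$ by its natural analogue for quadratic moments. Namely, for any Schwartz function $f\in\mathcal{S}(\R^d)$ and any $p\in\S^{d-1}$,
\[
\int_{p^{\perp}} x_i x_j\, f(x)\,d\H^{d-1}(x)
\;=\;-\frac{1}{4\pi^2}\int_{\R} \partial^2_{\xi_i\xi_j}\hat{f}(sp)\,ds,
\]
which follows from $\widehat{x_ix_j f}=-\frac{1}{4\pi^2}\partial^2_{\xi_i\xi_j}\hat f$ combined with the Fourier slice formula $\int_{p^{\perp}}g\,d\H^{d-1}=\int_{\R}\hat g(sp)\,ds$.

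I would then regularize exactly as before: set $f_\mu(x)=K_{\phi}(x)e^{-\pi|x|^2/\mu^2}$, so that $f_\mu\in\mathcal{S}(\R^d)$ and
$\hat{f}_\mu=\hat{K_\phi}*\mu^{-2}e^{-\pi\mu^2|\xi|^2}$. Apply the identity above to $f_\mu$ and pass to the limit $\mu\to\infty$. For the left-hand side, dominated convergence combined with the decay estimate~\eqref{estim_Kphi} (via Corollary~\ref{cor_1}) yields $\int_{p^{\perp}}x_ix_j f_\mu\,d\H^{d-1}\to \int_{p^{\perp}}x_ix_j K_\phi\,d\H^{d-1}$. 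For the right-hand side, Proposition~\ref{prop_regularite} asserts $\hat{K_\phi}\in W^{d+1,1}(\R^d)$, so $\partial^2_{ij}\hat{f}_\mu\to \partial^2_{ij}\hat{K_\phi}$ in $W^{d-1,1}(\R^d)$, and the trace theorem gives convergence of the 1D slices $\int_{\R}\partial^2_{ij}\hat f_\mu(sp)\,ds\to \int_{\R}\partial^2_{ij}\hat K_\phi(sp)\,ds$.

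It remains to evaluate $\int_\R \partial^2_{\xi_i\xi_j}\hat{K_\phi}(sp)\,ds$ explicitly. Using the formula for $D^2\hat{K_\phi}$ already derived in the proof of Proposition~\ref{prop_regularite}, together with the homogeneities $\phi^o(sp)=|s|\phi^o(p)$, $\phi^o_\xi(sp)=\mathrm{sign}(s)\phi^o_\xi(p)$ and $\phi^o_{\xi\xi}(sp)=|s|^{-1}\phi^o_{\xi\xi}(p)$, one finds
\[
\partial^2_{\xi_i\xi_j}\hat{K_\phi}(sp)
=\Bigl[-8\pi^2\phi^o_{\xi_i\xi_j}(p)\phi^o(p)-8\pi^2\phi^o_{\xi_i}(p)\phi^o_{\xi_j}(p)+64\pi^4 s^2\phi^o(p)^2\phi^o_{\xi_i}(p)\phi^o_{\xi_j}(p)\Bigr]e^{-4\pi^2 s^2\phi^o(p)^2}.
\]
Integrating in $s$ via $\int_\R e^{-as^2}ds=\sqrt{\pi/a}$ and $\int_\R s^2 e^{-as^2}ds=\frac{1}{2a}\sqrt{\pi/a}$ with $a=4\pi^2\phi^o(p)^2$, the two terms proportional to $\phi^o_{\xi_i}\phi^o_{\xi_j}$ cancel exactly, leaving $-\frac{4\pi^2}{\sqrt{\pi}}\phi^o_{\xi_i\xi_j}(p)$. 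Multiplying by $-\tfrac{1}{4\pi^2}$ gives the desired formula $\frac{1}{2}\int_{p^{\perp}}x\otimes x\,K_\phi\,d\H^{d-1}=\phi^o_{\xi\xi}(p)/(2\sqrt{\pi})$.

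The main obstacle is the justification of the limit passage on the Fourier side: one needs that $\partial^2_{ij}\hat{K_\phi}$ has a well-defined restriction to the line $\R p$, despite the discontinuity of $\phi^o_\xi$ at the origin. This is precisely where Proposition~\ref{prop_regularite} is needed: the $W^{d+1,1}$ regularity of $\hat{K_\phi}$, together with the trace theorem in $\R^d$, is enough to take one-dimensional traces of $\partial^2_{ij}\hat{K_\phi}$. All the other steps are routine Gaussian manipulations once homogeneity is invoked.
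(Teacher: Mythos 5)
Your proposal is correct and follows essentially the same route as the paper's proof: the same regularization $f_\mu = K_\phi e^{-\pi|x|^2/\mu^2}$, the same Fourier slice identity $\int_{p^\perp} x\otimes x\, f\, d\H^{d-1} = -\frac{1}{4\pi^2}\int_\R D^2\hat f(sp)\,ds$ justified by $W^{d-1,1}$ convergence plus the trace theorem on one side and dominated convergence on the other, and the same Gaussian computation in which the $\phi^o_\xi\otimes\phi^o_\xi$ terms cancel (the paper exhibits this cancellation via an integration by parts rather than the explicit second-moment formula, which is an immaterial difference).
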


\begin{proof}
Corollary~\ref{cor_1} states that the integral  
$\int_{p^{\perp}} |x|^{2} K_{\phi}d\H^{d-1}$ is well defined. 
Recalling the sequence $f_{\mu}$ used in the previous proposition, we observe  that 
$ D^2 \hat{f}_{\mu} \to D^2 \hat{K_{\phi}}$ in $W^{d-1,1}(\R^d)$, so that the trace theorem 
implies  
\begin{eqnarray} \label{eqn:lim3}
\lim_{\mu \to \infty} \int_{\R} D^2 \hat{f}_\mu(s p) ds  
&=& \int_{\R} D^2 \hat{K_{\phi}}(s p) ds.
\end{eqnarray}
From proposition \ref{prop_decrease}  and the Lebesque dominated convergence, we obtain
\begin{eqnarray} \label{eqn:lim4}
\lim_{\mu \to \infty}  \int_{p^{\perp}} x \otimes x~f_\mu(x)~ d\H^{d-1}  
&\to&  \int_{p^{\perp}} x \otimes x ~K_{\phi}(x)~  d\H^{d-1}. 
\end{eqnarray}
Moreover, we have 
\begin{eqnarray*}
\int_{p^{\perp}} x \otimes x f_{\mu}(x) d\H^{d-1} 
&=&  
\Scal{ \delta_{p^\perp} }{ x \otimes x f_{\mu}}
=  - \frac{1}{4 \pi^2} \Scal{\delta_{p}}{D^2 \hat{f}_{\mu}} \\
&=& - \frac{1}{4 \pi^2} \int_{\R} D^2 \hat{f}_{\mu}(s p) ds,
\end{eqnarray*}
so that in view of~(\ref{eqn:lim3})
$$  
\int_{p^{\perp}} x \otimes x K_{\phi}(x) d\H^{d-1}  
= - \frac{1}{4 \pi^2}  \int_{\R} D^2 \hat{K_{\phi}}(s p) ds.
$$

We next estimate the above right-hand side by a direct calculation:
\begin{eqnarray*}
- \frac{1}{4 \pi^2} \int_{\R} D^2 \hat{K_{\phi}}(sp) \,ds  
&=&  \left[ 
2 \phi^o (p) \phi^o_{\xi \xi} (p) +  2 \phi_{\xi}^o (p) \otimes \phi^o_{\xi} (p)  
\right] 
\int_{\R}  e^{-4\pi^2 s^2 \phi^o(p)^2} \,ds 
\\
&&  
- \left[ 2 \phi_{\xi}^o (p) \otimes \phi^o_{\xi} (p) \right]  
\int_{\R} 8 \pi^2 s^2 \phi^o(p)^2  e^{-4\pi^2 s^2 \phi^o(p)^2} \,ds. 
\end{eqnarray*}
Further, we see by integration by parts that 
\begin{eqnarray*}
\int_{\R} 8 \pi^2 s^2  \phi^o(p)^2  e^{-4\pi^2 s^2 \phi^o(p)^2}  \,ds 
&=&  
\int_{\R} \left\{  4 \pi^2 2 s  \phi^o(p)^2 e^{-4\pi^2 s^2 \phi^o(p)^2} \right\} \{ s \}  \,ds
\\
&=& 
\int_{\R} e^{-4\pi^2 s^2 \phi^o(p)^2} d s 
\;=\; \ds\frac{1}{2 \sqrt{\pi} \phi^o(p)},
\end{eqnarray*}
and we conclude that
$$ 
\frac{1}{2} \int_{p^{\perp}} x \otimes x K_{\phi}(x) d\H^{d-1}  
= \phi^o_{\xi \xi}(p)  \frac{1}{2 \sqrt{\pi}}.
$$
\end{proof}
\vspace*{5mm}


\begin{corollary}[The operator $F(X,p)$]~\\
Given $X \in \R^{d \times d}$ and $p \in \S^{d}$, let 
\begin{eqnarray} \label{def_F}
F(X,p) &=& \left(  \int_{p^{\perp}} K_{\phi}(x)  d\H^{d-1} \right)^{-1} 
\left( \frac{1}{2} \int_{p^{\perp}} <Xx,x> K_{\phi}(x) d\H^{d-1}  \right).
\end{eqnarray}
This operator is elliptic and satisfies  
\begin{eqnarray} \label{form_F} 
F(X,p) &=& \phi^o(p) \phi^o_{\xi \xi}(p):X.
\end{eqnarray}
\end{corollary}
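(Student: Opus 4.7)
The plan is to combine the two previous propositions directly: Proposition~\ref{proposition:positivity} computes the normalizing integral in the denominator, and Proposition~\ref{prop:moment_order_2} computes the tensorial integral in the numerator. Both are already shown to be well defined, which takes care of the absolute convergence required to exchange the sum and the integral below.

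First I would observe that $\langle Xx, x\rangle = X : (x\otimes x)$, so that by linearity the integral can be pulled through the trace pairing with the fixed matrix $X$:
\[
\frac{1}{2} \int_{p^{\perp}} \langle Xx, x\rangle \, K_{\phi}(x) \, d\H^{d-1}
\;=\;
\left( \frac{1}{2} \int_{p^{\perp}} x \otimes x \, K_{\phi}(x) \, d\H^{d-1} \right) : X.
\]
By Proposition~\ref{prop:moment_order_2}, the matrix on the right is $\phi^o_{\xi \xi}(p)/(2\sqrt{\pi})$, hence the numerator of $F(X,p)$ equals $\phi^o_{\xi \xi}(p):X / (2\sqrt{\pi})$. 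By Proposition~\ref{proposition:positivity}, the denominator is $1/(2\sqrt{\pi}\phi^o(p))$. Taking the quotient, the factors $2\sqrt{\pi}$ cancel and one $\phi^o(p)$ passes to the numerator, yielding~\eqref{form_F}.

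For ellipticity, I would argue that $\phi^o_{\xi\xi}(p)$ is positive semi-definite because $\phi^o$ is convex, and $\phi^o(p)>0$ by~\eqref{bounds_phi^o}. Hence if $X \geq Y$ in the sense of symmetric matrices, then $\phi^o_{\xi\xi}(p):(X-Y) \geq 0$, which is exactly the degenerate ellipticity of $F(\cdot,p)$. Note that $F$ is only degenerate elliptic (not strictly elliptic): Euler's identity applied to the $1$-homogeneous function $\phi^o$ gives $\phi^o_{\xi\xi}(p)\, p = 0$, so that the direction $p\otimes p$ belongs to the kernel of $X \mapsto \phi^o_{\xi\xi}(p):X$. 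This is the usual degeneracy of Finsler mean curvature and is consistent with the geometric meaning of the flow (motion along the normal direction).

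The only step that needs a little care is the exchange of the pairing with $X$ and the integration, but Corollary~\ref{cor_1} guarantees that $(x\otimes x K_\phi)_{|p^\perp} \in L^1(\R^{d-1})$, so the manipulation is fully justified. Everything else is a direct algebraic substitution, and I do not expect any real obstacle.
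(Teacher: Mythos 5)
Your proposal is correct and follows essentially the same route as the paper, which simply states that \eqref{form_F} follows directly from Propositions~\ref{proposition:positivity} and~\ref{prop:moment_order_2} and that ellipticity follows from the convexity of $\phi^o$. Your added details (linearity of the pairing $X:(x\otimes x)$, the $L^1$ justification via Corollary~\ref{cor_1}, and the remark on the degeneracy in the direction $p$ via Euler's identity) are all sound elaborations of the same argument.
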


\begin{proof} 
Equation~(\ref{form_F}) is a direct consequence of  propositions~\ref{proposition:positivity} 
and~\ref{prop:moment_order_2}, while the ellipticity of $F$ follows from the convexity of $\phi^o$.
\end{proof}

\begin{remark}
In the next section, we introduce an algorithm for motion by anisotropic mean
curvature, and show its consistency with an evolution equation of the
form $u_t = - F(D^2u,\ds\frac{\nabla u}{|\nabla u|})$ where $F$ is defined by~(\ref{def_F}).
The expression~(\ref{form_F}) shows that this operator is precisely 
the one corresponding to motion by anisotropic mean curvature
(see~\cite{bellettini_1996}).
\end{remark}

\vspace*{5mm}

\begin{prop}[Positivity of order moment $s$] \label{prop:moment_order_s}
Let $V$ be a subspace of $\R^d$ of dimension $1 \leq m \leq d$, and let $0 < s < 2$.
Then
$$ 
\int_{V} |x|^s K_{\phi} d\H^{m} > 0.
$$
\end{prop}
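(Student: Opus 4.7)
The plan is to combine a subordination formula for $|x|^s$ with the log-concavity of the Fourier symbol $\hat{K}_\phi$. For $0<s<2$ and $x\in V$, I would first invoke the L\'evy-type representation
\[
|x|^s \;=\; c_{s,m}\int_V \frac{1-\cos(2\pi t\cdot x)}{|t|^{s+m}}\,d\H^m(t),\qquad c_{s,m}>0,
\]
whose validity follows from the homogeneity of both sides: the inner integral converges for $0<s<2$ (at $0$ because $1-\cos \sim |t|^2$, at $\infty$ because $s+m>m$), and a rotation argument identifies it with a positive constant times $|x|^s$. Inserting this into $\int_V|x|^s K_\phi\,d\H^m$ and applying Fubini---legitimate because the integrand is nonnegative and $|x|^s|K_\phi|\in L^1(V)$ by Proposition~\ref{prop_decrease}---yields
\[
\int_V |x|^s K_\phi\,d\H^m \;=\; c_{s,m}\int_V \frac{F_V(0)-F_V(t)}{|t|^{s+m}}\,d\H^m(t),
\]
where $F_V(t):=\int_V \cos(2\pi t\cdot x)\,K_\phi(x)\,d\H^m(x)$.

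Next I would compute $F_V(t)$ by Fourier duality, following the trace argument of Proposition~\ref{proposition:positivity}: the evenness of $K_\phi$ kills the sine part of $e^{2\pi i t\cdot x}$, and approximating $K_\phi$ by the Schwartz regularizations $f_\mu$ used there gives
\[
F_V(t) \;=\; \int_{V^\perp} e^{-4\pi^2\phi^o(t+\eta)^2}\,d\H^{d-m}(\eta).
\]
In particular $F_V(0)=\int_V K_\phi\,d\H^m>0$, and $F_V$ is $C^2$ by differentiation under the integral (the pointwise Hessian of $e^{-4\pi^2\phi^o(\cdot)^2}$ is bounded near $0$ because $\phi^o\,\phi^o_{\xi\xi}$ is bounded by the $1$- and $(-1)$-homogeneities, while the decay at infinity is Gaussian).

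The crucial step is to show $F_V(0)\geq F_V(t)$ for every $t\in V$, with strict inequality on a set of positive $\H^m$-measure. For this I would appeal to the Pr\'ekopa-Leindler theorem: convexity of $\phi^o$ and nonnegativity imply convexity of $\phi^o(\cdot)^2$, so $\eta\mapsto e^{-4\pi^2\phi^o(\eta)^2}$ is log-concave on $\R^d$. Viewing $F_V(t)$ as the marginal of the log-concave function $(t,\eta)\mapsto e^{-4\pi^2\phi^o(t+\eta)^2}$ on $V\times V^\perp$, one concludes that $F_V$ is log-concave on $V$. The evenness of $\phi^o$ gives $F_V(-t)=F_V(t)$, and the bound $\phi^o(\eta)\geq\lambda|\eta|$ together with $|t+\eta|^2=|t|^2+|\eta|^2$ for $t\in V,\eta\in V^\perp$ yields $F_V(t)\leq C\,e^{-4\pi^2\lambda^2|t|^2}\to 0$ as $|t|\to\infty$. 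The inequality $F_V(0)\geq\sqrt{F_V(t)F_V(-t)}=F_V(t)$ is immediate from log-concavity and evenness; moreover the superlevel set $\{F_V=F_V(0)\}$ is a compact convex body (bounded because $F_V\to 0$), so its complement has positive $\H^m$-measure.

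Finally I would verify integrability: near $t=0$, the $C^2$ smoothness and evenness of $F_V$ give $F_V(0)-F_V(t)=O(|t|^2)$, so the integrand behaves like $|t|^{2-s-m}$, integrable on $V$ (dimension $m$) precisely when $s<2$; at infinity, boundedness of $F_V(0)-F_V(t)$ combined with $s>0$ ensures integrability of $|t|^{-s-m}$. Combined with the nonnegativity of the integrand and the positive-measure set where it is strictly positive, this gives $\int_V |x|^s K_\phi\,d\H^m>0$. The main technical obstacle I expect is justifying the $C^2$ regularity of $F_V$ at $0$, since $\phi^o$ itself is not $C^2$ at the origin; this should be handled by splitting the $\eta$-integration into a small ball (where the integrand is controlled directly using $\phi^o(\eta)\sim|\eta|$) and its complement (where Taylor expansion and dominated convergence apply).
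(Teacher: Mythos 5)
Your argument is correct and is essentially the paper's own proof in different notation: the L\'evy subordination formula for $|x|^s$ is the real-variable form of the finite-part identity $\F[|x|^s]=C_{s,m}\,Pf\left(|2\pi\xi|^{-m-s}\right)$ that the paper pairs against the symbol, your $F_V$ is exactly the paper's marginal $h(\xi')=\int_{V^\perp}e^{-4\pi^2\phi^o(\xi',\xi'')^2}\,d\xi''$, and your Pr\'ekopa--Leindler step is the functional version of the Brunn--Minkowski level-set argument the paper uses to show that $h$ is maximal at the origin (you are in fact more careful than the paper about the second-order bound $F_V(0)-F_V(t)=O(|t|^2)$ needed when $s$ is close to $2$). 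One small slip: in the Fubini step the double integrand is not nonnegative, since $K_\phi$ changes sign, but the absolute integrability you also invoke, $\int_V|x|^s\,|K_\phi|\,d\H^m<\infty$ from Proposition~\ref{prop_decrease}, is exactly what is needed.
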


\begin{proof} 
We first consider the case $m=d$ and $V = \R^d$. 
we consider the finite part $Pf\left( \frac{1}{|x|^{d+s}} \right)$ as a temperate
distribution, defined for $\varphi \in {\cal S}(\R^d)$ by
$$ 
\Scal{Pf\left( \frac{1}{|x|^{d+s}} \right)}{\varphi} 
= \lim_{\eps \to 0} 
\left\{  
\int_{\R^d\setminus B(0,\eps)} \frac{ \varphi(x) - \varphi(0) }{ |x|^{d+s}} dx 
\right\}. 
$$
This function happens to be the Fourier transform of the distribution $|x|^s$.
More precisely, 
\begin{eqnarray} \label{def_Pf}
\F\left[ |x|^s \right] = C_{s,d} Pf\left( \frac{1}{|2 \pi \xi|^{d+s}} \right),
& \quad \textrm{with}\;&
C_{s,d} = 2^{s + d} \pi^{d/2} \frac{\Gamma((s+d)/2)}{\Gamma(-s/2)},
\end{eqnarray}  
(see for instance~\cite{GelfandShilov}, $\Gamma$ denotes the Gamma function).
We can thus write
\begin{eqnarray}
\ds\int_{\R^d} |x|^s K_\phi \,dx
&=& \Scal{|x|^s}{K_\phi} 
\;=\; 
\Scal{C_{s,d} Pf \left( \frac{1}{|2 \pi \xi|^{d+s}} \right) }{e^{-4\pi^2 \phi^o(\xi)^2}}
\\
&=&
C_{s,d} \, \lim_{\e \to 0}
\ds\int_{\R^d \setminus B(0,\e)}
\ds\frac{e^{-4\pi^2 \phi^o(\xi)^2}-1}{|2 \pi \xi|^{d+s}}
\;>\;0,
\end{eqnarray}
a stricly positive quantity, in view of the sign of $C_{s,d}$.
\medskip

Suppose now that $m < d$ and consider the subspace $V = \textup{Vect}\{e_1, \dots, e_m\}$.
We write $x= (x^\prime, x^{\prime\prime})$, $\xi = (\xi^\prime, \xi^{prime\prime})$,
with $x^\prime, \xi^\prime \in V$. 
A straightforward computation shows that
\begin{eqnarray*}
\ds\int_V |x^\prime|^s K_\phi \, d\H^{m} 
&=&
\Scal{|x^\prime|^s}{K_\phi(x^\prime,0)}_{{\cal D}^\prime(\R^m),{\cal D}(\R^m)}
\\
&=&
\Scal{\H^{d-m}_{\llcorner \{ \xi^{\prime\prime}=0 \}} \otimes |x^\prime|^s}
{K_\phi(x^\prime,x^{\prime\prime})}_{{\cal D}^\prime(\R^d),{\cal D}(\R^d)}
\\
&=&
\Scal{C_{s,m} Pf \left( \ds\frac{1}{|2 \pi \xi^\prime|^{m+s}} \right)}{h(\xi^\prime)}
_{{\cal D}^\prime(\R^m),{\cal D}(\R^m)},
\end{eqnarray*}
where the function  $h : \R^{m} \to \R$ is defined by  
$$ 
h(\xi') = \int_{ \R^{d-m}} e^{-4\pi^2 \phi^o((\xi^{\prime}, \xi^{\prime\prime}))^2} 
d\xi^{\prime\prime}.
$$
The next lemma states that $h$ is $C^1$ and maximal at $\xi^\prime = 0$,
which in view of~(\ref{def_Pf}) and of the sign of $C_{s,m}$ concludes the proof.
\end{proof}
\vspace*{5mm}


\begin{lemma} The function  $h : \R^{m} \to \R$,  defined by   
$$ 
h(\xi') = \int_{ \R^{d-m}} 
e^{-4\pi^2 \phi^o((\xi', \xi^{\prime\prime})^2} d\xi^{\prime\prime}
$$
is $C^1$, with fast decay as $|\xi^\prime| \to \infty$, and is maximal at $\xi'=0$.
\end{lemma}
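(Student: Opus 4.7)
The plan is to prove the three assertions---$C^1$-regularity, fast decay as $|\xi'|\to\infty$, and maximality at the origin---separately, each by rather short arguments that exploit the structure of $\phi^o$.

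For the fast decay, I would simply use the lower bound $\phi^o((\xi',\xi''))^2 \geq \lambda^2(|\xi'|^2+|\xi''|^2)$ coming from~(\ref{bounds_phi^o}). This gives immediately
\[
h(\xi') \;\leq\; e^{-4\pi^2\lambda^2|\xi'|^2}
\int_{\R^{d-m}} e^{-4\pi^2\lambda^2|\xi''|^2}\,d\xi''
\;=\; C_\lambda\, e^{-4\pi^2\lambda^2|\xi'|^2},
\]
i.e., Gaussian decay, which is stronger than needed.

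For the $C^1$-regularity, I would differentiate under the integral sign. Although $\phi^o$ is only $C^2$ away from the origin, the product $\phi^o_\xi(\xi)\,\phi^o(\xi)$ (which is $1$-homogeneous and continuous on the unit sphere) extends continuously to $\xi=0$ with value $0$. Consequently the formal derivative
\[
\partial_{\xi'_i} h(\xi') \;=\; -8\pi^2 \int_{\R^{d-m}}
\phi^o_{\xi_i}\bigl((\xi',\xi'')\bigr)\,\phi^o\bigl((\xi',\xi'')\bigr)\,
e^{-4\pi^2 \phi^o((\xi',\xi''))^2}\,d\xi''
\]
has an integrand that is pointwise continuous in $\xi'$ and dominated by $C\,|(\xi',\xi'')|\,e^{-4\pi^2\lambda^2|(\xi',\xi'')|^2}$, which is integrable in $\xi''$ uniformly for $\xi'$ in a bounded set. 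The standard dominated convergence argument then gives both the existence and the continuity of $\partial_{\xi'_i}h$.

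The maximality at $\xi'=0$ is the most interesting point. The key observation is that since $\phi^o$ is nonnegative and convex, the composition $(\phi^o)^2$ is convex on $\R^d$ (convex nondecreasing function of a nonnegative convex function), so that $e^{-4\pi^2\phi^o(\xi)^2}$ is log-concave on $\R^d$. By the Prékopa--Leindler theorem, the marginal $h(\xi')=\int_{\R^{d-m}} e^{-4\pi^2\phi^o((\xi',\xi''))^2}\,d\xi''$ is itself log-concave on $\R^m$. Moreover, the symmetry $\phi^o(-\xi)=\phi^o(\xi)$ and the change of variable $\xi''\mapsto -\xi''$ show that $h$ is even. For any even log-concave function $h$ and any $\xi'\in\R^m$, writing $0=\tfrac12\xi'+\tfrac12(-\xi')$ and applying log-concavity gives
\[
h(0)\;\geq\;\sqrt{h(\xi')\,h(-\xi')}\;=\;h(\xi'),
\]
which is the desired maximality. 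The main (mild) obstacle is invoking Prékopa correctly: since the density $e^{-4\pi^2\phi^o(\xi)^2}$ has Gaussian decay, it is integrable on every affine subspace, so the marginal is well defined and the Prékopa inequality applies without trouble.
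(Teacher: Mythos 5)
Your proposal is correct, and the decay and $C^1$ parts coincide with the paper's argument (domination by the Gaussian $e^{-4\pi^2\lambda^2|\xi|^2}$ and differentiation under the integral via Lebesgue; your remark that $\phi^o_\xi\,\phi^o$ extends continuously by $0$ at the origin is a useful point the paper leaves implicit). The maximality step is where you diverge. The paper argues directly on the superlevel sets $A_{\xi',t}=\{\xi''\,:\,e^{-4\pi^2\phi^o((\xi',\xi''))^2}\geq t\}$: these are slices of the convex, symmetric Frank shape $\{\phi^o\leq \frac{1}{2\pi}\sqrt{-\ln t}\}$, so convexity gives $\tfrac12\bigl(A_{\xi'_0,t}+A_{-\xi'_0,t}\bigr)\subset A_{0,t}$, symmetry gives $|A_{\xi'_0,t}|=|A_{-\xi'_0,t}|$, and Brunn--Minkowski yields $|A_{0,t}|\geq|A_{\xi'_0,t}|$ for every $t$, whence $h(0)\geq h(\xi'_0)$ by the layer-cake formula. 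You instead observe that $(\phi^o)^2$ is convex, hence the density is log-concave, invoke Pr\'ekopa's theorem to get log-concavity of the marginal $h$, and conclude by evenness. The two arguments are close relatives --- the paper's level-set/Brunn--Minkowski computation is essentially the standard proof of the Pr\'ekopa marginal theorem specialized to this density --- so yours is shorter at the cost of citing a bigger theorem, while the paper's is self-contained modulo Brunn--Minkowski. Both rest on exactly the same two structural facts: convexity and evenness of $\phi^o$. Your version is a legitimate, complete proof.
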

\begin{proof}
recalling~\eqref{bounds_phi^o}, we first remark that 
\[
e^{-4\pi^2 \phi^o(\xi', \xi_{\prime\prime})^2}
\leq e^{-4\pi^2 \lambda^2 |\xi|^2}
\leq e^{-4\pi^2 \lambda^2 |\xi^\prime|^2},
\]
so that the functions $\xi^\prime \to e^{-4\pi^2 \phi^o(\xi', \xi_{\prime\prime})^2}$
and their derivatives are uniformly bounded in $L^1(\R^{d-m})$. 
The $C^1$ regularity of $h$ is thus a consequence
of the Lebesgue theorem. The above estimate also shows that
\begin{eqnarray*}
 |h(\xi')| \leq \int_{\R^{d-m}} e^{-4\pi^2 \lambda^2 (\xi_1^2 + \xi_2^2 + ... + \xi_d^2)} 
d\xi_{m+1} ... d\xi_d \leq  \frac{1}{2 \lambda^m \sqrt{\pi}^{m}} e^{-4 \pi^2 \lambda^2 \xi'^2 }. 
\end{eqnarray*}
\medskip

To determine the maximal value of $h$, we consider 
the sets  $A_{\xi^\prime,t}$, defined for all  $\xi^\prime \in \R^{m}$ and  $t \in ]0,1[$ by 
 \begin{eqnarray*}
  A_{\xi^\prime,t} &=& 
\Set{ \xi^{\prime\prime} \in \R^{d-m}}{ e^{- 4 \pi^2 \phi^o((\xi^\prime,\xi^{\prime\prime}))^2} \geq t}
 \end{eqnarray*}
 
\begin{figure}[htbp]
\centering
\includegraphics[width=4cm]{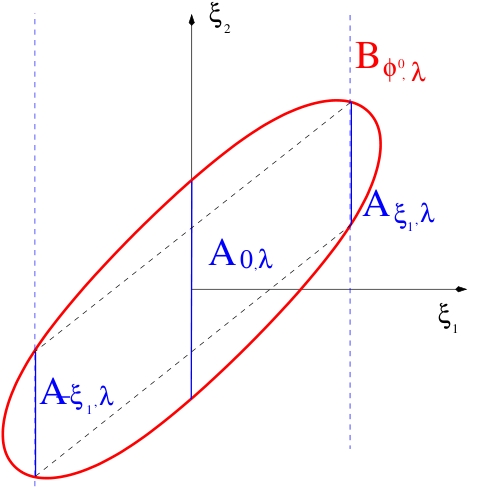}
\caption{ }
\label{fig:anisotropie_2}
\end{figure}

Fix $\xi^\prime_0 \in \R^{m}$. The set  $A_{\xi^\prime_0,t}$ can be defined as 
the intersection of the hyperplane  $\Set{ \xi \in \R^d }{\xi^\prime = \xi_0^\prime}$ 
with the Frank shape  
  $$B_{\phi^o,t} = \Set{\xi \in \R^d}{ \phi^o(\xi) \leq \frac{1}{2\pi}\sqrt{-ln(t)}}.$$
The set $B_{\phi^o,t}$  is convex since $\phi^o$ is convex. 
Moreover, from the symmetry of $\phi^o$, ($\phi^o(\xi) = \phi^o(-\xi)$), we have  
$$ 
|A_{\xi^\prime_0,t}| = | A_{-\xi^\prime_0,t}|.
$$
Next, let
\begin{eqnarray*}
 \tilde{A}_{\xi^\prime_0,t} &=& 
\frac{1}{2} \left( A_{\xi^\prime_0,t} + A_{\xi^\prime_0,t} \right) \\
&=& \Set{\xi^{\prime\prime} \in \R^{d-m}}
{ \exists (\xi_1^{\prime\prime},\xi_2^{\prime\prime}) 
\in  A_{\xi^\prime_0,t} \times A_{-\xi^\prime_0,t}, 
\quad \xi^{\prime\prime} = \frac{1}{2} \left(\xi_1^{\prime\prime} + \xi_2^{\prime\prime} \right) }.
\end{eqnarray*}

We remark that the convexity of $\phi^o$ implies  that 
$\tilde{A}_{\xi^\prime_0,t} \subset A_{0,t}$. 
Indeed, let  $\xi^{\prime\prime} \in \tilde{A}_{\xi^\prime_0,t}$, 
\begin{eqnarray*}
\phi^o\left( (0,\xi^{\prime\prime}) \right) 
&=& \phi^o\left( \frac{1}{2} 
\left( (\xi_0^\prime,\xi_1^{\prime\prime}) + (-\xi^\prime_0,\xi_2^{\prime\prime})  
\right) \right)
\\
&\leq& 
\frac{1}{2} \left( \phi^o\left( (\xi_0^\prime,\xi_1^{\prime\prime})\right) +  
\phi^o\left( (-\xi_0^\prime,\xi_2^{\prime\prime})\right) \right) 
\leq \frac{1}{2\pi} \sqrt{-ln(t)},
\end{eqnarray*}
so that $e^{-4 \pi^2 \phi^o\left( (0,\xi'') \right)^2} \geq t$, 
i.e.  $\xi^{\prime\prime} \in A_{0,t} $. 
Invoking the Brunn-Minkowski inequality, we obtain
\begin{eqnarray}
 |\tilde{A}_{\xi^\prime_0,t}|^{1/(d-m)} &=&  
\frac{1}{2} |A_{\xi^\prime_0,t} +  A_{-\xi^\prime_0,t}|^{1/(d-m)} 
\\
& \geq& 
\frac{1}{2} \left( |A_{\xi^\prime_0,t}|^{1/(d-m)} + |A_{-\xi^\prime_0,t}|^{1/(d-m)}  \right) 
\geq  |A_{\xi^\prime_0,t}|^{1/(d-m)},
\end{eqnarray}
and finally that, 
$$ |A_{0,t} | \geq |\tilde{A}_{\xi^\prime_0,t}| \geq  |A_{\xi^\prime_0,t}|.$$
As this equality holds for any $\xi^\prime_0 \in \R^{m}$, it follows that $h$
is maximal at $\xi^\prime = 0$. 
  \end{proof}


\section{The Bence-Merriman-Osher-like algorithm}

Barles and Souganidis~\cite{MR1115933} have studied the convergence of a general
approximation scheme to viscosity solutions of nonlinear second-order parabolic PDE's of the type
\begin{eqnarray} \label{eq:EDP}
    u_t + F(D^2 u, D u) = 0.
\end{eqnarray}
The main assumption on the function $F$ is its ellipticity, i.e., $F$
satisfies
\begin{eqnarray} \label{def_ellipticity}
\forall\; p \in \R^{d}\setminus \{0\},
\forall X, Y \in {\mathbf M}^{d \times d}_s, 
\quad X \leq Y &\Leftarrow& 
F(X,p) \leq F(Y,p).
\end{eqnarray}

Barles and Souganidis study a family of operators $G_h : BUC(\R^d) \to BUC(\R^d) $ for $h >0$, 
which satisfy, for all $ u,v \in BUC(\R^d)$  
\begin{itemize} 
 \item  {\it Continuity  }  
 \begin{eqnarray}\label{continuity}
  \forall\; c \in \R, \quad G_h(u + c) = G_h u, 
 \end{eqnarray}
 \item  {\it Monotonicity  }
  \begin{eqnarray} \label{Monotonicity}
   u \leq v \;\Leftarrow\; G_h u \leq  G_h v + o(h) 
  \end{eqnarray}
  (see remark 2.1 in~\cite{MR1115933})
 \item  {\it Consistency } 
  \begin{eqnarray} \label{Consistent}
  \forall\; \varphi \in {\cal C}^\infty(\R^d), \quad
 \begin{cases}
  \lim_{h \to 0} h^{-1} (G_h(\varphi) - \varphi)(x) & \leq - F_{*}(D^2 \varphi(x), D \varphi(x)) \\
  \lim_{h \to 0} h^{-1} (G_h(\varphi) - \varphi)(x) &\geq - F^{*}(D^2 \varphi(x), D \varphi(x))
 \end{cases}.
  \end{eqnarray}
 \end{itemize}
For all $T > 0$ and for all partitions $P = \{  O = t_0 < ... < t_n = T \}$  of $[0,T]$, 
one can then define a sequence of fonctions $u_P: \R^d \times [0,T] \to \R$ by
\begin{eqnarray} \label{eqn_def_scheme}
u_P(.,t) =
\begin{cases}
 G_{t-t_i}(u_P(.,t_i)) & \text{if} \quad t \in (t_i,t_{i+1}], \\
 g & \text{if} \quad  t = 0,
\end{cases}
\end{eqnarray}
If additionnally the following condition holds,
\begin{itemize}
 \item {\it Stability}
 \begin{eqnarray} \label{Stability}
  \begin{cases}
   \text{there exists } \omega \in C([0,\infty],[0,\infty]), \text{ independent of } P \text{ and depending } \\
  \text{ on } g \text{ only through the modulus of continuity of } g, \\
  \text{such that } \omega(0) = 0 \text{ and for all } t \in [0,t], \\ 
  \| u_P(.,t) - g \|_{L^{\infty}} \leq \omega(t),
  \end{cases}
 \end{eqnarray}
\end{itemize}
then the following theorem holds~\cite{MR1115933} : 
\begin{theorem} \label{thm_BarlesSouganidis}
Assume that $G_h: BUC(\R^d) \to BUC(\R^d)$ satisfies \eqref{continuity}, \eqref{Monotonicity}, \eqref{Consistent}, and \eqref{Stability} for all $T>0$, $g \in BUC(\R^d)$ and all partitions $P$ of $[0,T]$. Then, $u_P$ defined in~(\ref{eqn_def_scheme}) converges uniformly in $\R \times[0,T]$
to the viscosity solution of \eqref{eq:EDP}.
\end{theorem}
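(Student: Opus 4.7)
The plan is to use the method of half-relaxed limits (discontinuous viscosity solutions), which is the standard toolkit for this kind of convergence result. The four hypotheses \eqref{continuity}, \eqref{Monotonicity}, \eqref{Consistent}, \eqref{Stability} are tailored exactly to make each step of that argument work. A comparison principle for the PDE \eqref{eq:EDP} is assumed as part of the framework (one invokes it to identify the upper and lower limits); if $F$ is only discontinuous, one uses $F_\ast$ and $F^\ast$ as in \eqref{Consistent}.

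First I would define, for $(x,t) \in \R^d \times [0,T]$, the half-relaxed limits
\[
\bar{u}(x,t) = \limsup_{\substack{(y,s)\to(x,t)\\ |P|\to 0}} u_P(y,s),
\qquad
\underline{u}(x,t) = \liminf_{\substack{(y,s)\to(x,t)\\ |P|\to 0}} u_P(y,s).
\]
By the stability assumption \eqref{Stability}, the family $\{u_P\}$ is uniformly bounded, so $\bar{u}$ and $\underline{u}$ are finite upper/lower semicontinuous functions with $\underline{u} \leq \bar{u}$. The modulus estimate in \eqref{Stability} further gives $\bar{u}(x,0) = \underline{u}(x,0) = g(x)$, so both limits match the initial datum.

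The core step is to show $\bar{u}$ is a viscosity subsolution and $\underline{u}$ a viscosity supersolution of \eqref{eq:EDP}. I treat the subsolution case; the other is symmetric. Let $\varphi \in C^\infty(\R^d \times [0,T])$ touch $\bar{u}$ from above at $(x_0,t_0)$ with a strict local maximum (one can always perturb $\varphi$ by a small quadratic to enforce strictness). By a standard extraction argument there exist partitions $P_n$ with $|P_n|\to 0$ and points $(x_n,t_n)\to (x_0,t_0)$ at which $u_{P_n} - \varphi$ attains a local maximum and $u_{P_n}(x_n,t_n) \to \bar{u}(x_0,t_0)$. Writing $t_n \in (t_i^{(n)},t_{i+1}^{(n)}]$ and setting $h_n = t_n - t_i^{(n)}$, the scheme \eqref{eqn_def_scheme} gives
\[
u_{P_n}(\cdot,t_n) \;=\; G_{h_n}\bigl(u_{P_n}(\cdot,t_i^{(n)})\bigr).
\]
Since $u_{P_n}(\cdot,t_i^{(n)}) \leq \varphi(\cdot,t_i^{(n)}) + c_n$ for $c_n := u_{P_n}(x_n,t_i^{(n)}) - \varphi(x_n,t_i^{(n)}) + o(1)$ locally (with $c_n \to 0$), monotonicity \eqref{Monotonicity} together with the translation invariance \eqref{continuity} yields
\[
u_{P_n}(x_n,t_n) \;\leq\; G_{h_n}\bigl(\varphi(\cdot,t_i^{(n)})\bigr)(x_n) + c_n + o(h_n).
\]
Subtracting $\varphi(x_n,t_n)$, dividing by $h_n$, and passing to the limit using consistency \eqref{Consistent} produces
\[
\varphi_t(x_0,t_0) + F_\ast(D^2\varphi(x_0,t_0),D\varphi(x_0,t_0)) \;\leq\; 0,
\]
which is exactly the subsolution inequality.

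Finally, invoking the comparison principle available for \eqref{eq:EDP} (with $F_\ast$ versus $F^\ast$, which coincide when $F$ is continuous), $\bar{u} \leq \underline{u}$ on $\R^d\times[0,T]$, so $\bar{u} = \underline{u} =: u$, which is the unique continuous viscosity solution of \eqref{eq:EDP}. The identity $\bar u = \underline u$ combined with local boundedness upgrades pointwise convergence to locally uniform convergence on $\R^d\times[0,T]$. The main delicate point is the passage from the pointwise touching of $\varphi$ to a global inequality valid for the scheme: the monotonicity hypothesis is designed precisely so that adding a constant (allowed by \eqref{continuity}) converts a local ordering at the contact point into an ordering that survives one application of $G_h$, modulo the $o(h)$ error. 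Once this step is in place, the rest is a routine semicontinuous-envelope argument.
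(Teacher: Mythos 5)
Your argument is the standard Barles--Souganidis half-relaxed-limits proof, which is exactly the proof of the cited result: the paper itself states this theorem without proof, referring to \cite{MR1115933}, where the argument is precisely the one you outline (semicontinuous envelopes, sub/supersolution property via monotonicity plus consistency at a touching point, then comparison to collapse $\bar u=\underline u$ and upgrade to uniform convergence). Your sketch is correct, including the caveat that a comparison principle for \eqref{eq:EDP} must be assumed as part of the framework.
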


This result was used by H. Ishii, G. Pires and P.E. Souganidis in~\cite{Ishhi_pires_souganidis} 
to study anisotropic mean curvature flow.
These authors introduce a kernel $f$, which satisfies:  
 \begin{itemize}
 \item[($H_1$)]   $f(x) \geq 0, \quad f(-x) = f(x) \quad \text{for all } x \in \R^d, \quad \text{and} \quad \int_{\R^d} f(x) dx = 1$
 \item[($H_2$)]  $\int_{p^{\perp}} (1 + |x|^2 ) |f(x)| d \H^{d-1} < \infty \quad \text{for all} \quad p \in S^{d}$
 \item[($H_3$)]  $\begin{cases}
                 \text{the functions } p \to  \int_{p^{\perp}} f(x) d\H^{d-1} \quad p  \to \int_{p^{\perp}} x_i x_j f(x) d\H^{d-1},\\
                  1 \leq i,j \leq d, \quad \text{are continuous on } {\mathbf S}^{d}
                 \end{cases}
$
\item[($H_4$)]  $  \int_{\R^d} |x|^{2} |f(x)| dx < \infty  $
\item[($H_5$)] For all collections $\set{R(\rho)}_{0<\rho<1} \subset \R$ such that 
 $R(\rho) \to \infty \quad \text{and} \quad \rho R(\rho)^{2} \to 0 \quad \text{as} \quad  \rho \to 0$, and for all 
 functions $g: \R^{d-1} \to \R$ of the form
$ g(\xi) = a + \Scal{A \xi}{\xi}$ with  $a \in \R$ and $A \in \S^{d-1}$, 
$$ \lim_{\rho \to 0 } \sup_{U \in {\mathbf O}(d)} \sup_{0<r<\rho} \left| \int_{B(0,R(\rho))} f_{U}(\xi,rg(\xi)) g(\xi) d\xi -  \int_{\R^{d-1}}f_{U}(\xi,0) g(\xi) d\xi   \right| = 0,$$ where ${\mathbf O}(n)$ denotes the group of $d \times d$ orthogonal matrices, and where $f_U: \R^d \to \R$ is defined for all $U \in {\mathbf O}(d) $ by $ f_{U}(x) = f(U^{*}x) $. 
\end{itemize}
Theorem~\ref{thm_BarlesSouganidis} has been applied to schemes for anisotropic mean curvature motion
(see theorem 3.3 in~\cite{Ishhi_pires_souganidis})
with $G_h$  defined by 
 \begin{eqnarray}
  G_h \Psi(x) &=& \sup\Set{\lambda \in \R}{S_h \1_{\Psi \geq \lambda}(x) \geq \theta_h}  \\
              &=&   \inf\Set{\lambda \in \R}{S_h \1_{\Psi \geq \lambda}(x) <\theta_h} 
 \end{eqnarray}
where
$$ S_h g(x) = h^{-d/2} f(./\sqrt{h}) * g (x)
= h^{-d/2} \ds\int_{\R^d} f(y/\sqrt{h})  g(x-y)\,dy, 
\quad  \theta_h = \frac{1}{2} + c \sqrt{h},$$
and where $F(X,p)$ is given by
$$ F(X,p) = -  \left( \int_{p^{\perp}} f(x) d \H^{d-1}(x)  \right)^{-1} \left(\frac{1}{2} \int_{p^{\perp}} \Scal{Xx}{x} f(x) d \H^{d-1}(x) + c|p|\right),$$ 
(the last term in this integral models a forcing term).

In this section, we follow the proof in \cite{Ishhi_pires_souganidis} to show a consistency
result in our case when f is a non positive kernel and does not have moments of
order two ( ie.  $x \to |x|^2 f(x) \notin L^1(\R^d)$). 
We introduce two operators  $G_h^{+}$ and $G_h^{-}$ defined by 
 \begin{eqnarray}
  G_h^{+} \Psi(x) &=& \sup\Set{\lambda \in \R}{S_h \1_{\Psi \geq \lambda}(x) \geq \theta_h}  \\
  G_h^{-} \Psi(x) &=&   \inf\Set{\lambda \in \R}{S_h \1_{\Psi \geq \lambda}(x) <\theta_h} 
 \end{eqnarray}
which are not necessarly equal  as our kernel is not being  nonnegative. 
\subsection{A consistency result in the case where $f = K_{\phi}$}
To adapt these results to our context we modify the assumptions $(H_1)$, $(H_4)$ and $(H_5)$
as follows
\begin{itemize}  
\item[$(H^\prime_1)$]
$
\int_{p^{\perp}} f(x) d \H^{d-1} > 0 \;\text{for all}\; p \in S^{d}, 
\quad f(-x) = f(x) 
\quad \text{and}\; \int_{\R^d} f(x) dx = 1,
$
\item[$(H^\prime_4)$]
$
\int_{\R^d} |x|^{2-\mu} |f(x)| dx < \infty \quad \text{for}\; 0 < \mu < 2, 
$
\item[$(H^\prime_5)$]
Assume that $\mu \in ]0,1/2]$. Then  for all collections $\set{R(\rho)}_{0<\rho<1} \subset \R$ 
such that $R(\rho) \to \infty$ and $\rho R(\rho)^{2-\mu} \to 0$ as $\rho \to 0$, and for all functions $g: \R^{d-1} \to \R$ of the form $g(\xi) = a + \Scal{A \xi}{\xi}$ with $a \in \R$ and $A \in \S^{d-1}$, 

\begin{eqnarray*} 
\label{hyp:limite_rho}
\lim_{\rho \to 0}  \sup_{U \in O(d)} \sup_{0<r<\rho} 
\left| 
\int_{B(0,R(\rho))} f_{U}(\xi,rg(\xi)) g(\xi) d\xi 
-  \int_{\R^{d-1}}f_{U}(\xi,0) g(\xi) d\xi   
\right| = 0,
\\
\label{hyp:limite_rho_abs}
\lim_{\rho \to 0}  \sup_{U \in O(d)} \sup_{0<r<\rho} 
\left|
\int_{B(0,R(\rho))} \left|f_{U}(\xi,rg(\xi))\right|  g(\xi)  d\xi 
-  \int_{\R^{d-1}} \left| f_{U}(\xi,0) \right| g(\xi)  d\xi   
\right|
= 0,
\end{eqnarray*}
\end{itemize}

In this last statement, $B(0,R(\rho))$ denotes the $(n-1)$-dimensional ball, centered at $0$
and of radius $R(\rho)$. 


\subsubsection{ $K_{\phi}$ satisfies $(H_2, H_3)$ and $(H^\prime_1, H^\prime_4, H^\prime_5)$}
We remark that  $\hat{K}_{\phi}(\xi) = \hat{K}_{\phi}(-\xi)$ and $\F(K_{\phi})(0)=1$, 
so that
\begin{eqnarray*}
 K_{\phi}(-x) = K_{\phi}(x) \quad \text{for all } x \in \R^d, \quad \text{and} \quad \int_{\R^d} K_{\phi}(x) dx = 1.  
\end{eqnarray*}
Moreover, proposition~\eqref{proposition:positivity} shows that 
\begin{eqnarray*} 
 \int_{p^{\perp}} K_\phi(x) d \H^{d-1} \geq  \frac{1}{ (4 \pi)^{d/2} \Lambda^d } > 0    
 \quad \text{for all} \quad p \in S^{d},
\end{eqnarray*}
so that $(H^\prime_1)$ is satisfied.
Propositions  \eqref{proposition:positivity} and \eqref{prop:moment_order_2} also imply
that $K_\phi$ satisfies $(H_2)$, i.e., 
\begin{eqnarray}
\int_{p^{\perp}} (1 + |x|^2 ) |K_\phi(x)| d \H^{d-1} 
&<& \infty \quad \text{for all} \quad p \in S^{d}.
\end{eqnarray}
Concerning $(H_3)$, we note that
\begin{eqnarray*}
\frac{1}{2} \int_{p^{\perp}} x \otimes x K_{\phi}(x) d\H^{d-1}  
&=& 
\ds\frac{1}{2\sqrt{\pi}} \phi^o_{\xi\xi}(p),
\end{eqnarray*}
and that
\begin{eqnarray*}
\int_{p^{\perp}} K_{\phi}  d\H^{d-1}  
&=& \frac{1}{ 2 \sqrt{\pi} \phi^o(p) }.
\end{eqnarray*}
Since $\phi^o$ is smooth on $\R^d \setminus \{0\}$ and positive 
(in particular $\phi^o \geq \lambda$ on $\mathbf{S}^{d}$ ) we see that the functions 
\begin{eqnarray*} 
p \to  \int_{p^{\perp}} K_{\phi}(x) d\H^{d-1} 
&\quad& 
p  \to \int_{p^{\perp}} x_i x_j K_{\phi}(x) d\H^{d-1},
\quad 1 \leq i,j \leq d,
\end{eqnarray*}
are continuous on $\mathbf{S}^{d}$.

We next prove  that if $0 < \mu < 2$, then
\begin{eqnarray*}
 \int_{\R^d} |x|^{2-\mu} |f(x)| dx < \infty. 
\end{eqnarray*}
Indeed, proposition~\ref{prop_decrease} with $s = 1- \mu/2$ shows that 
\begin{eqnarray*}
\int_{\R^d} |x|^{2-\mu} |f(x)| \,dx  
&\leq&   \int_{\R^d}  \frac{C_{\phi^o,s} |x|^{2-\mu} }{1 + |x|^{d+1+ (1- \mu/2})} dx 
\;\leq\;   \int_{\R^d}  \frac{C}{1 + |x|^{d + \mu/2}} \,dx
\\
&\leq& 
C|\mathbf{S}^d| \int_{0}^{\infty} \frac{1}{(1 + r^{1 + \mu/2})} dr \;<\; \infty,
\end{eqnarray*}
for some generic constant $C$.
It remains to prove $(H^\prime_5)$:
Let $0 < \mu < 1/2$ and let $R~:~\R^+ \longrightarrow \R^+$ such that, as $\rho \to 0$,
$R(\rho) \to \infty$ and $\rho R(\rho)^{2-\mu} \to 0$.
Setting $f_U(x) = K_\phi(U^*x)$, we consider

\begin{eqnarray}\label{int_fU}
\int_{B(0,R(\rho))} f_{U}(\tilde{x},rg(\tilde{x})) g(\tilde{x}) d\tilde{x} =   \int_{B\left(0,R(\rho)^{\frac{2-\mu}{2}}\right)} f_{U}(\tilde{x},rg(\tilde{x})) g(\tilde{x}) d\tilde{x} 
\\
+ \int_{ B(0,R(\rho)) \setminus B\left(0,R(\rho)^{\frac{2-\mu}{2}}\right)} f_{U}(\tilde{x},rg(\tilde{x})) g(\tilde{x}) d\xi. 
\end{eqnarray}

Let $h_{\rho}, h  : \R^{d-1} \to \R$ denote the functions
$$ 
\begin{cases}
h_{\rho,r}(\tilde{x}) = f_{U}(\tilde{x},r g(\tilde{x})) g(\tilde{x}) 
\chi_{B(0,R(\rho))^{\frac{2-\mu}{2}}} 
\\
h(\tilde{x}) = f_{U}(\tilde{x},0) g(\tilde{x}). 
\end{cases}
$$
When $r < \rho$, $h_{\rho,r}(\tilde{x})$ converge to $h(\tilde{x}) $ pointwise as $\rho \to 0$, 
and 
$$ |h_{\rho,r}(\tilde{x})| \leq \frac{C}{ 1 + |x|^{d-1+s} } \in L^1(\R^{d-1}),$$
for some constant $C$ independent of $\rho$, $r$ and $U$.  
Invoking the Lebesque dominated convergence theorem, we conclude that 
$$ 
\lim_{\rho \to 0, \; r < \rho  }\int_{\R^{d-1}} h_{\rho,r}(\tilde{x}) \,d\tilde{x} 
\to  
\int_{\R^{d-1}} h(\tilde{x})  \,d\tilde{x},
$$
uniformly with respect to $U$  and $r$.  
The second term in~(\ref{int_fU}) converges to 0 uniformly with respect to 
$U$ and $r$ as $\rho \to 0$, since 
\begin{eqnarray*}
\lefteqn{
\int_{ B(0,R(\rho)) \setminus B\left(0,R(\rho)^{\frac{2-\mu}{2}}\right)} 
|f_{U}(\tilde{x},rg(\tilde{x})) g(\tilde{x})| \,d\tilde{x}}
\\ 
&\leq& 
C\, \int_{ B(0,R(\rho)) \setminus B\left(0,R(\rho)^{\frac{2-\mu}{2}}\right)} 
\frac{1}{ 1 + |\tilde{x}|^{d-1 + s} } d\tilde{x}   
\;\leq\;
C\, |\mathbf{S}^{d-1}| \,\int_{R(\rho)^{\frac{2-\mu}{2}}}^{R(\rho)}  
\frac{1}{ 1 + |r|^{1+ s} } dr 
\\
&\leq&
C\, |\mathbf{S}^{d-1}| \left( R(\rho)^{\frac{-(2-\mu)s}{2}} - R(\rho)^{-s} \right),  \\
\end{eqnarray*}
for some generic constant $C$. 
We conclude that 
\begin{eqnarray*}
\lim_{\rho \to 0} \sup_{U \in O(d)} \sup_{0<r<\rho} 
\left| 
\int_{B(0,R(\rho))} f_{U}(\tilde{x},rg(\tilde{x})) g(\tilde{x}) \,d\tilde{x} 
- \int_{\R^{d-1}}f_{U}(\tilde{x},0) g(\tilde{x}) \,d\tilde{x}   
\right| 
&=& 0.
\end{eqnarray*}
The second statement in $(H_5^\prime)$ is established similarly.

\subsubsection{The consistency proof}

\begin{prop}{\label{lemme_3}} 
Let  $\varphi \in C^2(\R^d)$. For all $z \in \R^d$ and $\epsilon > 0$,  there exists $\delta > 0$ such that for all  $x \in B(z,\delta)$ and $h \in (0,\delta]$, 
if $\nabla\phi(x) \neq 0$ we have
\begin{eqnarray*} 
G_h^{-} \varphi(x) &\leq& \varphi(x) + (-F(D^2 \varphi(z) , D\varphi(z)) + \epsilon)h 
\\
\text{and}\; G_h^{+} \varphi(x) &\geq& \varphi(x) + (-F( D^2\varphi,D\varphi(z)) - \epsilon)h.
\end{eqnarray*}
\end{prop}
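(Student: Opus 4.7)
The plan is to adapt the Ishii--Pires--Souganidis strategy (Theorem~3.3 in \cite{Ishhi_pires_souganidis}) to the non-positive, slowly decaying kernel $K_\phi$. By the very definitions of $G_h^\pm$, it suffices to prove that, with $\beta_\pm = -F(D^2\varphi(z),D\varphi(z)) \pm \epsilon$, one has
\[
S_h \1_{\{\varphi \,\geq\, \varphi(x)+\beta_+ h\}}(x) \,<\, \theta_h \quad \text{and} \quad S_h \1_{\{\varphi \,\geq\, \varphi(x)+\beta_- h\}}(x) \,\geq\, \theta_h
\]
whenever $x\in B(z,\delta)$ and $h\in(0,\delta]$. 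This reduces the whole statement to computing the $\sqrt h$-asymptotics of $S_h \1_{\{\varphi \geq \varphi(x)+\beta h\}}(x)$.

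Writing $S_h\1_A(x) = \int K_\phi(z)\, \1_{\{\varphi(x-\sqrt h z)-\varphi(x)\geq \beta h\}}\,dz$, I would rotate coordinates so that the last axis is aligned with $p:=D\varphi(x)/|D\varphi(x)|$, split $z=(\tilde z, z_d)$ with $\tilde z \in p^\perp$, and Taylor-expand:
\[
\varphi(x-\sqrt h z)-\varphi(x) \,=\, -\sqrt h\,|D\varphi(x)|\,z_d + \frac{h}{2}\,D^2\varphi(x)z\cdot z + o(h|z|^2).
\]
For $|z|$ bounded, the indicator condition becomes $z_d \leq \sqrt h\,G(z) + o(\sqrt h)$ with
\[
G(z) \,=\, \frac{1}{|D\varphi(x)|}\left(\frac12 D^2\varphi(x)z\cdot z - \beta\right).
\]
Using the symmetry $K_\phi(-z)=K_\phi(z)$, which yields $\int K_\phi\, \1_{z_d\leq 0}\,dz=\tfrac12$, one is reduced to estimating the thin-slab correction $\int K_\phi(z)(\1_{z_d\leq\sqrt h G(z)}-\1_{z_d\leq 0})\,dz$.

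The key technical step is to truncate the integration domain to a ball $B(0,R(\sqrt h))$ with $R(\sqrt h)\to\infty$ and $\sqrt h\,R(\sqrt h)^{2-\mu}\to 0$. The Taylor remainder is then uniformly $o(\sqrt h)$ on the bounded region, while the tail contribution is controlled via the decay estimate $|K_\phi(x)|\leq C/(1+|x|^{d+1+s})$ of Proposition~\ref{prop_decrease} with $s=1-\mu/2$. On the truncated region, hypothesis $(H^\prime_5)$, verified for $K_\phi$ in the preceding subsection, implies that the thin-slab integral converges (uniformly in the rotation and in the parameter $r<\sqrt h$) to $\sqrt h \int_{p^\perp} G(\tilde z,0) K_\phi(\tilde z,0)\,d\H^{d-1}$. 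Propositions~\ref{proposition:positivity} and~\ref{prop:moment_order_2} then identify this leading correction as
\[
\sqrt h\,\frac{\int_{p^\perp}K_\phi\, d\H^{d-1}}{|D\varphi(x)|}\,\bigl(F(D^2\varphi(x),D\varphi(x))-\beta\bigr).
\]
Continuity of $D\varphi$, $D^2\varphi$ and of $(X,p)\mapsto F(X,p)$ away from $p=0$ gives a strict sign for $x$ near $z$, and the choices $\beta = \beta_\pm$ yield the two desired inequalities.

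The main obstacle is exactly that $|x|^2 K_\phi$ is not integrable, so the truncation scale $R(\sqrt h)$ and the Taylor remainder must be carefully balanced against the merely polynomial decay of $K_\phi$. This is precisely why the modified assumptions $(H^\prime_4)$ and $(H^\prime_5)$ were introduced, and why the proof of \cite{Ishhi_pires_souganidis}, which relies on kernels with finite second moments, cannot be transferred verbatim.
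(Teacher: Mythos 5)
Your proposal is correct and follows essentially the same route as the paper: reduction to the threshold inequality $S_h \1_{\{\varphi\geq\varphi(x)+\beta h\}}(x)\lessgtr\theta_h$, rotation aligning $e_d$ with $D\varphi/|D\varphi|$, Taylor expansion trapping the super-level set between paraboloid epigraphs, truncation at a scale $R(\rho)$ balanced against the $(2-\mu)$-moment bound of $(H'_4)$, the symmetry identity $\int_{y_d\leq 0}K_\phi=\tfrac12$, and identification of the $O(\sqrt h)$ slab correction via $(H'_5)$ together with Propositions~\ref{proposition:positivity} and~\ref{prop:moment_order_2}. The only differences are cosmetic (the paper truncates on a cylinder $B_{d-1}(0,R(\rho))\times\R$ rather than a ball, and spells out the two-sided sandwich with $\epsilon^2$-perturbed Hessians explicitly), so no further comment is needed.
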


\begin{proof}
We closely follow the argument in~\cite{Ishhi_pires_souganidis}.

1. We only prove the first inequality. The other one is obtained similarly.

2. Without loss of generality, we can assume that $z=0$. Let us fix $a \in \R$, such that
$$ 
a> -F(D^2\varphi(0),D \varphi(0)).
$$
The inequality is proved if we can exhibit a $\delta>0$ 
such that, for all $x \in B(0,\delta)$ and $h \in (0,\delta]$,
$$ 
S_h \1_{\varphi\geq \varphi(x) + ah}(x)  < \theta_h. 
$$ 
3. Fix $\delta_1 > 0$, such that $D \varphi \neq 0$ on $B(0,\delta_1)$ 
and choose a continuous family  
$\set{U(x)}_{x \in B(0,\delta_1)} \subset O(d)$, 
such that for all $x \in B(0,\delta_1)$,
$$ 
U(x) \left( \ds\frac{D \varphi(x)}{|D \varphi(x)|} \right) = e_d,
$$
where $e_d$ denotes the unit vector with components $(0,0,...,0,1) \in \R^d$. 
Note that if $x \in B(0,\delta_1)$, then
$$
S_h \1_{\varphi \geq \varphi(x) + a h} 
= 
\int_{\R^d} f_{U(x)}(y) \1_{\varphi \geq \varphi(x) + a h}(x - \sqrt{h} U(x)^{*}y) dy.
$$
4. Choosing $\delta$ smaller if necessary, $(H^\prime_1)$ implies the inequality
$$  
a> -F(D^2\varphi,D \varphi) \quad \text{in} \quad B(0,\delta_1),
$$
or in other words, 
\begin{eqnarray} \label{eqn:reads}
\frac{1}{2} \int_{\R^{d-1}} \Scal{P^* U(x) D^2 \varphi(x) U(x)^{*} P \xi}{\xi} f_{U(x)}(\xi,0) d\xi 
&-& a \int_{\R^{d-1}} f_{U(x)}(\xi,0) d\xi \nonumber \\
 &<&  -c |D\varphi(x)|,
\end{eqnarray}
where $P$ denotes the $d \times (d-1)$ matrix with components $P_{ij} = \delta_{ij}$. 

5. We next fix $\epsilon>0$, and $\delta_2 \in (0,\delta_1[$, 
such that for all $x \in B(0,\delta_2)$,
\begin{eqnarray}
\nonumber
\frac{1}{2}  \int_{\R^{d-1}} \Scal{P^* U(0)(D^2 \varphi(0) + 3\epsilon^2 I ) U(0)^{*} P \xi}{\xi} f_{U(x)}(\xi,0) d\xi  \\
 - (a - \epsilon^2) \int_{\R^{d-1}} f_{U(x)}(\xi,0) d\xi < \; -(\xi +  \epsilon) |D\varphi(0)|. 
\end{eqnarray}
6. The Taylor theorem yields a $\gamma>0$ such that 
for all $h>0$, $y \in \R^d$, and $x \in B(0,\delta_2)$, 
if $\sqrt{h} |y| \leq \gamma$, then 
\begin{eqnarray*}
\varphi(x - \sqrt{h}U(x)^* y) 
&\leq& 
\varphi(x) - \sqrt{h} \Scal{D \varphi(x)}{U(x)^* y} \\
&& \quad \quad  + \frac{h}{2} 
\Scal{U(x) (D^2 \varphi(x) + \epsilon^2 I) U(x)^* y}{y} \\
&\leq&  
\varphi(x) - \sqrt{h}|D\varphi(x)| y_d + Ch y_d^2 
\\
&&  \quad \quad + \frac{h}{2}\Scal{P^* U(x) (D^2 \varphi(x) + 2\epsilon^2 I) U(x)^* P y'}{y'},
\end{eqnarray*}
and 
\begin{eqnarray*}
\varphi(x - \sqrt{h}U(x)^* y) 
&\geq& 
\varphi(x) - \sqrt{h} \Scal{D \varphi(x)}{U(x)^* y} \\
&& \quad \quad   + \frac{h}{2} 
\Scal{U(x) (D^2 \varphi(x) - \epsilon^2 I) U(x)^* y}{y} 
\\
&\geq& 
\varphi(x) - \sqrt{h}|D\varphi(x)| y_d - Ch y_d^2 
\\
&& \quad \quad
+ \frac{h}{2}\Scal{ P^* U(x) (D^2 \varphi(x) - 2\epsilon^2 I) U(x)^* P y'}{y'},
\end{eqnarray*}
where we write $y = (y',y_d) \in \R^{d-1} \times \R$, and where $C$ is a positive constant. 

7. Reducing $\gamma$ and $\delta_2$ if necessary, the previous inequalities imply 
that for $y \in B(0,\gamma/\sqrt{h})$ and $x \in B(0,\delta_2)$, 
\begin{itemize}
\item  
if  $ \varphi(x - \sqrt{h} U(x)^{*} y) \geq \varphi(x) + ah$,  then
\begin{eqnarray*}
y_d &\leq& 
\frac{\sqrt{h}}{ |D \varphi(x)| - C \sqrt{h} y_d} 
\left( -a + \frac{1}{2} \Scal{P^*U(x) (D^2 \varphi(x) + 2\epsilon^2 I) U(x)^* P y'}{y'} \right) 
\\
&\leq& 
\frac{\sqrt{h}}{ |D \varphi(0)|} 
\left( -a + \epsilon^2 + \frac{1}{2} 
\Scal{P^* U(0) (D^2 \varphi(0) + 3\epsilon^2 I) U(0)^*  P y'}{y'} \right)
\end{eqnarray*}
\item 
if 
$$ 
y_d \leq \frac{\sqrt{h}}{ |D \varphi(0)|} 
\left( 
-a - \epsilon^2 + \frac{1}{2} \Scal{P^* U(0) (D^2 \varphi(0) - 3\epsilon^2 I) U(0)^* P y'}{y'}    \right),
$$
then
$$ 
\varphi(x - \sqrt{h} U(x)^{*} y) \geq \varphi(x) + ah.
$$
\end{itemize}
We define
$$ \begin{cases}
  a^{\epsilon} = (a-\epsilon^2)|D\varphi(0)|^{-1} \\
  a_{\epsilon} = (a+\epsilon^2) |D\varphi(0) |^{-1} \\
A^{\epsilon} = |D\varphi(0)|^{-1} P^{*} U(0) \left( D^2 \varphi(0) + 3\epsilon^2 I \right) U(0)^{*}P   \\
A_{\epsilon} = |D\varphi(0)|^{-1} P^{*} U(0) \left( D^2 \varphi(0) - 3\epsilon^2 I \right) U(0)^{*}P,
   \end{cases}
$$
and for $y^\prime \in \R^{d-1}$  
\begin{eqnarray*}
g^{\epsilon}(y^\prime) = \left( - a^{\epsilon} + \frac{1}{2} \Scal{A^{\epsilon} y^\prime}{y^\prime} \right) 
&\quad&
g_{\epsilon}(y^\prime) = \left( - a_{\epsilon} + \frac{1}{2} \Scal{A_{\epsilon} y^\prime}{y^\prime}\right).
\end{eqnarray*}
We also set 
$$ 
V_{h,x} = \Set{y \in R^d}{\varphi(x - \sqrt{h} U(x)^{*} y) \geq \varphi(x) + ah }, 
$$
and
$$
\begin{cases}
E^{+}_{\epsilon,h,x} = 
\Set{y \in \R^d}{ y_d \leq \sqrt{h} g_\epsilon(y^\prime)} 
\\
E^{-}_{\epsilon,h,x} = 
\Set{y \in \R^d}{ y_d \leq \sqrt{h} g^\epsilon(y^\prime)}.
\end{cases}
$$
We check that for all $x \in B(0, \delta_2)$, 
$$ 
\begin{cases}
   \left(   V_{h,x}\cap B(0,\gamma/\sqrt{h}) \right) \quad  \subset \quad \left(  E^{+}_{\epsilon,h,x}\cap B(0,\gamma/\sqrt{h}) \right)  \\
    \left(  E^{-}_{\epsilon,h,x}\cap B(0,\gamma/\sqrt{h})  \right) \quad \subset \quad   \left(  V_{h,x}\cap B(0,\gamma/\sqrt{h}) \right) 
\end{cases}
$$
8. The assumption $(H_4)$ yields the existence  of a decreasing function  $\omega \in C([0,\infty),[0,\infty)) $ such that
$\omega(R) \to 0$ as $R \to \infty $, and

$$ \int_{B(0,R)^{c}} |f(y)| |y|^{2-\mu} dy \leq \omega(R)^2, \text{ for all } R \geq 0.$$
For each $0<t<1$, we define the family of sets $R(t) \in (0,\infty)$ by
\begin{eqnarray}
 \omega(R(t)) &=& t R(t)^{2-\mu},
\end{eqnarray}
which satify ($H^\prime_5$). We then choose $\tau \in (0,1)$ such that
\begin{eqnarray}
 R(t) \leq \gamma /t, \text{ for all } t \in (0,\tau]
\end{eqnarray}
9. Let  
$$ 
\rho = \sqrt{h}, \quad T(\rho) = B_{n-1}(0,R(\rho)) \times \R \subset \R^d.
$$
For all $h \in ]0,\tau^2)$ and for all $x \in B(0,\delta_2)$, we estimate
\begin{eqnarray*}
\int_{V_{h,x}} f_{U(x)}(y) dy  
&=&
\int_{\R^d} f_{U(x)}(y) \1_{\varphi \geq \varphi(x) + a h}(x - \sqrt{h}U^*(x)y) dy 
\\
&\leq&  
\int_{V_{h,x}\cap B(0,R(\rho))} f_{U(x)}(y) dy +  \int_{B(0,R(\rho))^c} |f_{U(x)}(y)| dy 
\\
&\leq& 
\int_{E^{+}_{\epsilon,h,x}\cap B(0,R(\rho))} f_{U(x)}(x) dx
+  \int_{B(0,R(\rho))^c} |f_{U(x)}(y)| dy  
\\
&& \quad \quad \quad +  \int_{\left(E^{+}_{\epsilon,h,x} \setminus E^{-}_{\epsilon,h,x}  \right)\cap B(0,R(\rho))} 
|f_{U(x)}(y)| dy 
\\
&\leq&
\int_{E^{+}_{\epsilon,h,x}\cap T(\rho)} f_{U(x)}(y) dy 
+  \int_{\left(E^{+}_{\epsilon,h,x} \setminus E^{-}_{\epsilon,h,x}\right)\cap T(\rho)} 
|f_{U(x)}(y)| dy 
\\
&& 
\quad \quad \quad +  3\int_{B(0,R(\rho))^c} |f_{U(x)}(y)| dy 
\\
\end{eqnarray*}
10. For the last integral above, we have 
\begin{eqnarray*}
\int_{B(0,R(\rho))^c} |f_{U(x)}|(y) dy 
&\leq&  
\frac{1}{R(\rho)^{2-\mu}} \int_{B(0,R(\rho))^c} |y|^{2-\mu} |f_{U(x)}|(y) dy   
\;\leq\; \omega(R(\rho)) \rho,
\end{eqnarray*}
and moreover, since $K_\phi$ is symmetric,
\begin{eqnarray*}
\frac{1}{2} &=& 
\int_{y_d \leq 0}f_{U(x)}(y) dy 
\;\leq\;
\int_{T(\rho) \cap \set{y_d \leq 0}}f_{U(x)}|(y) dy + \omega(R(\rho)) \rho.
\end{eqnarray*}
We note that
\begin{eqnarray*}
&&\int_{T(\rho) \cap E^{+}_{\epsilon,h,x}} f_{U(x)}(y) dy 
=
\int_{T(\rho) \cap \set{y_d \leq \rho g^{\epsilon}(y') } } f_{U(x)}(y) \,dy  
\\
&& \quad \quad \quad \quad  =   
\int_{T(\rho) \cap \set{y_d \leq 0}}f_{U(x)}(y) dy 
+ \int_{B_{n-1}(0,R(\rho))} d\xi \int_0^{\rho g^{\epsilon}(y')} f_{U(x)}(\xi,r) \,dr 
\\
&& \quad \quad \quad \quad  =  
\int_{T(\rho) \cap \set{y_d \leq 0}}f_{U(x)}(y) \,dy  
+ \int_0^{\rho} dr \int_{B_{n-1}(0,R(\rho))}  f_{U(x)}(\xi,rg(\xi)) g^{\epsilon}(\xi) \,d\xi.  
\end{eqnarray*}
It follows from  $(H^\prime_5)$ that as $\rho \to 0$,
\begin{eqnarray*}
\frac{1}{\rho} \left\{  \int_{T(\rho) \cap   E^{+}_{\epsilon,h,x}} f_{U(x)}(y) dy   
-  \int_{T(\rho) \cap \set{y_d \leq 0}}f_{U(x)}(y) dy \right\}  
&\to&  
\int_{\R^{d-1}}f_{U(x)}(\xi,0) g^{\epsilon}(\xi) d\xi,
\end{eqnarray*}
uniformly with respect to $x$.
Possibly reducing $\tau$  we may assume that for $x \in B(0, \delta_2)$,
\begin{eqnarray*}
\frac{1}{\rho} \left\{  \int_{T(\rho) \cap   E^{+}_{\epsilon,h,x}} f_{U(x)}(y) dy   
-  \int_{T(\rho) \cap \set{y_d \leq 0}}f_{U(x)}(y) dy \right\} 
&\leq& 
\int_{\R^{d-1}}f_{U(x)}(\xi,0) g^{\epsilon}(\xi) d\xi + \epsilon^2. 
\end{eqnarray*}

Using same argument, we also conclude that
 \begin{eqnarray*}
    \int_{T(\rho) \cap \left(E^{+}_{\epsilon,h,x} \setminus E^{-}_{\epsilon,h,x}  \right)}|f_{U(x)}(y)|dy &=& 
   \left\{  \int_{T(\rho) \cap \set{0 \leq y_d \leq \rho g^{\epsilon}(y') } }|f_{U(x)}(y)|dy \right\}  \\ 
   && \quad \quad - \left\{  \int_{T(\rho) \cap \set{0 \leq y_d \leq \rho g_{\epsilon}(y') } }|f_{U(x)}(y)|dy \right\} \\
   &\leq&  \rho \int_{\R^{d-1}} |f_{U(x)}|(\xi,0) (g^{\epsilon}(\xi) - g_{\epsilon}(\xi)) d\xi + \rho\epsilon^2 \\
   &\leq& \rho\epsilon^2 \left( 1 +  \int_{\R^{d-1}} \left( 2 + 3 |\xi|^2 \right) |f_{U(x)}|(\xi,0) d\xi    \right) \\
   &\leq&  C_0 \rho\epsilon^2, 
 \end{eqnarray*}
 where
 $$C_0 =  \sup_{x \in B(0,\delta_2)}\left\{ 1 +  \int_{\R^{d-1}} \left( 2 + 3 |\xi|^2 \right) |f_{U(x)}|(\xi,0) d\xi    \right\}.$$
11. Finally, noting that from \eqref{eqn:reads},
 $$ \int_{\R^{d-1}} f_{U(x)}(\xi,0) g^{\epsilon}(\xi) d\xi \leq -c -\epsilon,$$
 we get
 \begin{eqnarray*}
  \int_{\R^d} f(x) \1_{\varphi \geq \varphi(x) + a h}(x - \sqrt{h}z) dz  &\leq&  \frac{1}{2} + \int_{\R^{d-1}} f_{U(x)}(\xi,0) g(\xi) d\xi \\
&& \quad \quad \quad \quad + \rho\left(\epsilon^2 + 4\omega(R(\rho)) + C_0 \epsilon^2 \right) \\
  &\leq&  \frac{1}{2} + \rho \left( -c -\epsilon + \epsilon^2 +  4\omega(R(\rho))   + C_0 \epsilon^2  \right) \\
 &<&  \theta_h, 
\end{eqnarray*}
for $\eps$ sufficiently small.
 \end{proof}

Even if the function $\phi$ is regular, $G_h^+ \varphi$ and $G_h^- \varphi$ need not be equal and continuous.
However, it is easy to check that if $\varphi = \1_{\Omega}$ is a characteristic function then
 $G_h^+ \1_{\Omega} = G_h^- \1_{\Omega}$. The next proposition shows that if $\varphi$ is smooth,  $G_h^- \varphi(x) = G_h^+(x) \varphi + o(h)$, so that one could conceivably build a Bence Merriman
Osher type scheme using either $G_h^+$ or $G_h^-$.

\begin{prop}{\label{lemme_4}} 
Let  $\varphi \in C^2(\R^d)$. Let $x \in \R^d$ such as  $\nabla \varphi(x) \neq 0$, then 
$$ G_h^{-} \varphi(x) =   G_h^{+} \varphi(x) + o(h).$$
\end{prop}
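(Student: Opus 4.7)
The plan is to show that both $G_h^+\varphi(x)$ and $G_h^-\varphi(x)$ lie within $O(\epsilon h)$ of $\varphi(x) - F(D^2\varphi(x),\nabla\varphi(x))\,h$, for every $\epsilon > 0$. One half of each sandwich is already provided by Proposition~\ref{lemme_3}; the other half requires a uniformity observation in the parameter $a$ appearing in that proof.

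First I would record the general inequality $G_h^-\varphi(x) \leq G_h^+\varphi(x)$. Indeed, writing $A = \{\lambda : S_h\1_{\varphi\geq\lambda}(x)\geq\theta_h\}$ and $B = \R\setminus A$, one has $G_h^+\varphi(x) = \sup A$ and $G_h^-\varphi(x) = \inf B$. For any $\mu > G_h^+\varphi(x)$ one must have $\mu\in B$, hence $\mu \geq G_h^-\varphi(x)$; letting $\mu \downarrow G_h^+\varphi(x)$ yields the claim.

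Next I would revisit the proof of Proposition~\ref{lemme_3} and observe that its key estimate
$$S_h\1_{\varphi\geq\varphi(x)+ah}(x) < \theta_h, \qquad a > -F(D^2\varphi(x),\nabla\varphi(x)),$$
actually holds \emph{uniformly} in $a \in [-F+\epsilon,+\infty)$ once $h$ is small enough (depending on $\epsilon$ but not on $a$). For $a$ in a bounded interval $[-F+\epsilon,A]$ this follows from inspecting the chain of estimates in steps 4--11 of that proof: all constants can be chosen uniformly in $a$, and the leading term in step~11 is $\rho\int_{\R^{d-1}} f_{U(x)}(\xi,0)\,g^\epsilon(\xi)\,d\xi$, which is linear in $a$ and, by the strict positivity $\int_{p^\perp} K_\phi\,d\H^{d-1} > 0$ from Proposition~\ref{proposition:positivity}, becomes \emph{more} negative as $a$ grows. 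For $a \geq A$ chosen so that $Ah$ exceeds the oscillation of $\varphi$ on a fixed small ball around $x$, the level set $\{\varphi \geq \varphi(x)+ah\}$ is separated from $x$ by a distance bounded below, and the decay $|K_\phi(y)| \leq C/(1+|y|^{d+1})$ of Proposition~\ref{prop_decrease} forces $S_h\1_{\varphi\geq\varphi(x)+ah}(x) \to 0$ as $h \to 0$. The symmetric statement holds for $a \leq -F-\epsilon$.

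From this uniformity I conclude: every $\lambda > \varphi(x)+(-F+\epsilon)h$ lies in $B$, hence $G_h^+\varphi(x) = \sup A \leq \varphi(x)+(-F+\epsilon)h$; symmetrically, every $\lambda < \varphi(x)+(-F-\epsilon)h$ lies in $A$, so $G_h^-\varphi(x) = \inf B \geq \varphi(x)+(-F-\epsilon)h$. Combining with Proposition~\ref{lemme_3} and the first step gives
$$\varphi(x)+(-F-\epsilon)h \;\leq\; G_h^-\varphi(x) \;\leq\; G_h^+\varphi(x) \;\leq\; \varphi(x)+(-F+\epsilon)h,$$
so $0 \leq G_h^+\varphi(x) - G_h^-\varphi(x) \leq 2\epsilon h$. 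Since $\epsilon > 0$ is arbitrary, this is exactly $G_h^-\varphi(x) = G_h^+\varphi(x) + o(h)$. The main obstacle is the uniformity step: one must verify that the delicate bookkeeping of constants in the proof of Proposition~\ref{lemme_3} can be carried out with constants independent of $a$ on $[-F+\epsilon,A]$, and separately dispatch the large-$a$ regime using only the decay of the kernel—there is no single monotonicity argument available because $K_\phi$ changes sign.
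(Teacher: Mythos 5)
Your reduction is sound as a strategy: the elementary inequality $G_h^-\varphi(x)\leq G_h^+\varphi(x)$ is correct, and the sandwich $\varphi(x)+(-F-\epsilon)h\leq G_h^-\varphi(x)\leq G_h^+\varphi(x)\leq\varphi(x)+(-F+\epsilon)h$ would indeed give the result. The gap sits exactly where you flag it, but your proposed repair does not close it. To bound $\sup\Set{\lambda}{S_h\1_{\varphi\geq\lambda}(x)\geq\theta_h}$ you must show $S_h\1_{\varphi\geq\varphi(x)+ah}(x)<\theta_h$ for \emph{every} $a\geq -F+\epsilon$. Your dichotomy is (i) $a$ in a bounded interval, handled by tracking constants in the proof of Proposition~\ref{lemme_3}, and (ii) $a\geq A$ with $Ah$ exceeding the oscillation of $\varphi$ on a fixed ball $B(x,\sigma)$, handled by kernel decay. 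But (ii) forces $A\geq h^{-1}\operatorname{osc}_{B(x,\sigma)}\varphi$, so $A$ blows up as $h\to 0$ and the interval in (i) is not bounded: the regime $1\ll a\ll 1/h$ is covered by neither argument. There the level set $\set{\varphi\geq\varphi(x)+ah}$ still passes within distance $O(ah)=o(1)$ of $x$, so the decay of $K_\phi$ gives nothing; and steps 9--11 of Proposition~\ref{lemme_3} are not uniform over that range, because $(H_5')$ is a statement about a \emph{fixed} quadratic $g$, while your $g^\epsilon=-a^\epsilon+\frac{1}{2}\Scal{A^\epsilon\,\cdot}{\cdot}$ has an unbounded constant term. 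The non-uniformity is not cosmetic: for $a\gg 1/\sqrt h$ the putative leading term $\rho\int_{\R^{d-1}}f_{U}(\xi,0)\,g^\epsilon(\xi)\,d\xi\sim-\rho a\,c$ is unbounded, whereas the quantity it is meant to approximate is trivially bounded by $\int_{\set{y_d\leq 0}}|f|$, so the approximation must fail there. Since $K_\phi$ changes sign you cannot interpolate between values of $a$ by monotonicity (as you note yourself), so this regime genuinely needs a new ingredient that you do not supply (for instance, that the one-dimensional marginal $s\mapsto\int_{\R^{d-1}}K_\phi(y',s)\,dy'$ is a positive Gaussian, so that half-spaces pushed away from the origin receive strictly and monotonically less than half the mass).

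The paper takes a different route that sidesteps quantification over all thresholds. It tests only the two levels $G_h^\pm\varphi(x)$: splitting the kernel into a part near $x$ (where $\nabla\varphi\neq 0$ makes $\lambda\mapsto S_h\1_{\varphi\geq\lambda}(x)$ continuous) and a far tail of mass $o(\sqrt h)$, it shows this map has jumps of size $o(\sqrt h)$, hence $S_h\1_{\varphi\geq G_h^\pm\varphi(x)}(x)=\theta_h+o(\sqrt h)$. Subtracting, the integral of $K_{\phi,h}$ over the slab $\set{G_h^-\varphi(x)\leq\varphi< G_h^+\varphi(x)}$ is $o(\sqrt h)$; a Taylor expansion identifies that integral with $\frac{\epsilon(h)}{|\nabla\varphi(x)|\sqrt h}\int_{p^\perp}K_\phi\,d\H^{d-1}+o(\sqrt h)$, and the strict positivity of $\int_{p^\perp}K_\phi\,d\H^{d-1}$ (Proposition~\ref{proposition:positivity}) converts this into $\epsilon(h)=o(h)$. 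If you wish to keep your route you must prove the consistency estimate uniformly in $a$ over the whole half-line, which is a substantially stronger statement than Proposition~\ref{lemme_3}; otherwise, switch to an argument of the paper's type that only examines the two relevant levels and the thin slab between them.
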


\begin{proof}[proof]
Let $x \in \R^d$  such as  $\nabla\varphi(x) \neq 0$ and  for all $h>0$ let  
$$ \epsilon(h) =   G_h^{+} \varphi(x) - G_h^{-} \varphi(x).$$
Introduce  also  $g_{h}(\lambda):\R \to \R$ defined by 
$$ 
g_{h}(\lambda) = S_h \chi_{\varphi \geq \lambda}(x) = \int_{\R^d} K_{\phi,h}(y) \chi_{\varphi \geq \lambda}(x-y) dy.
$$
This function may not be continuous. We claim that  its jumps are bounded by $o(\sqrt{h})$.
Indeed, for all $\lambda \in \R$,  one can express  $g_{h}(\lambda)$  as 
\begin{eqnarray*}
  g_{h}(\lambda) &=& \int_{ B(0,\sigma)} K_{\phi,h}(y) \chi_{\set{\varphi \geq \lambda}}(x-y) dy +  \int_{ \R^d \setminus B(0,\sigma)} K_{\phi,h}(y) \chi_{\set{\varphi \geq \lambda}}(x-y) dy \\
&=&  \tilde{g}_{h}(\lambda) + R_h(\lambda),\\ 
\end{eqnarray*}
where $\sigma$ is chosen sufficiently small so that  $|\nabla \varphi(y)|>0$  for all $y \in  B(x,\sigma)$. 
Let $0 < \mu < 1$, let
$$
\omega(R) =  \int_{ B(0,R)^c } |y|^{2-\mu} |K_{\phi}(y)| dy, 
$$
and let $R(t)$ be defined by the equality  $\omega(R(t)) = t R(t)^{2-\mu}$. 
Note that  $(H^\prime_4)$ implies that $\sqrt{h} R(h)^{2-\mu} \to 0$ as $h \to 0$, so that 
$\sqrt{h}R(h)^{1-\nu/2} < \sigma$ for $h$ sufficiently small, it follows that
\begin{eqnarray*}
 |R_h(\lambda)|  &\leq&   \int_{\R^d \setminus B(0, \sqrt{h} R(h)^{1-\mu/2})} |K_{\phi,h}(y)| dy.
\end{eqnarray*}
Moreover, changing variables, we see that
\begin{eqnarray*}
\int_{\R^d \setminus B(0, \sqrt{h} R(h)^{1-\mu/2})} |K_{\phi,h}(y)| dy
&\leq&
\int_{\R^d \setminus B(0, R(h)^{1-\mu/2})} |K_{\phi}(y)| dy
\\
&\leq&
\ds\frac{1}{R(h)^{(2-\mu)^2/2}}
\int_{\R^d \setminus B(0, R(h)^{1-\mu/2})} |y|^{2-\mu}\,|K_{\phi}(y)| dy
\\
&\leq&
\ds\frac{\omega(R(h))}{R(h)^{(2-\mu)^2/2}}.
\end{eqnarray*}
Since $0 < (2-\mu)/2 < 1$, it follows that
\begin{eqnarray*}
|R_{h}(\lambda)| &\leq& \left( \ds\frac{\omega(R(h))}{R(h)^{2 - \mu}} \right)^{1- \mu/2}
\;=\; h^{1-\mu/2}
\;=\; o(\sqrt{h}).
\end{eqnarray*}

Further,  the fact that $|\nabla \varphi(y)| > 0$ on $B(x,\sigma)$ show that 
$\tilde{g}_h$ is continuous in $\lambda$, which proves the claim.


Recall that  
$$ \begin{cases}
    G_h^{-} \varphi(x) = \inf \Set{ s \in \R}{ S_h \chi_{\varphi \geq s}(x) < \theta_h } \\
   G_h^{+} \varphi(x) = \sup \Set{ s \in \R}{ S_h \chi_{\varphi \geq s}(x) \geq \theta_h },
   \end{cases}
$$
it follows  from the claim above  that
$$  S_h \chi_{\varphi \geq  G_h^{-} \varphi(x)}(x) = \theta_h + o(\sqrt{h}), \text{ and } \quad S_h \chi_{\varphi \geq  G_h^{+} \varphi(x)}(x) = \theta_h + o(\sqrt{h}), $$

and consequently
$$ \int_{\R^d} K_{\phi,h}(y) \chi_{ G_h^{-} \varphi(x) \leq  \varphi \leq G_h^{-} \varphi(x) + \epsilon(h) }(x-y) dy = o(\sqrt{h}).$$ 

One can use the same argument as in the consistency proof, (in particular see point 7) to show that 
asymptotically, the above integral behaves like 
$$  \int_{\R^d} K_{\phi,h}(y) \chi_{ G_h^{-} \varphi(x) \leq  \varphi \leq G_h^{-} \varphi(x) + \epsilon(h) }(x-y) dy = \frac{\epsilon(h)}{|\nabla \varphi(x)| \sqrt{h}} \int_{p^{\perp}} K_{\phi}(x) d\H^{d-1}(x) + o(\sqrt{h}), $$
where, $p = \frac{\nabla \phi(x)}{|\nabla \phi(x)|}$.
In conclusion, as $\int_{p^{\perp}} K_{\phi}(x) d\H^{d-1}(x)> 0$, we deduce that 
$$ \epsilon(h) =   \frac{|\nabla \varphi(x)| }{\int_{p^{\perp}} K_{\phi}(x) d\H^{d-1}(x)} o(h),$$
which proves the proposition.
\end{proof}

\subsection{Discution}
Our consistency result sheds light on the relationship between the kernel $K_{\phi}$
and the Hamilton Jacobi equation \eqref{eq:hamilton_jacobi_phi}. Proving convergence of a Bence Merriman
Osher type algorithm in our context seems to be very difficult (if true at all). The
argument of \cite{Ishhi_pires_souganidis} does not apply here. 
The main difficulty is that $G^{\pm}_h \varphi$ may not be continuous, even if $\varphi$ is regular.
 Further, we can only show monotonicity of the operators $G_h^{\pm}$ up to $o(h)$ for smooth functions whose gradients do not vanish. The source of these difficulties is really the thresholding in the definition of $G_h^{\pm}$.  \\

Thus, rather than advocating for a BMO algorithm, we have considered in the
next section a numerical scheme where instead of this thresholding, we modify
the convolution product $K_{\phi,h}*\varphi$ using a reaction operator, in the spirit of a phase
field algorithm. More precisely, given a small parameter $\eps > 0$, we may define
$$ G_{h,\eps} \varphi(x) = T_{h,\eps}(K_{\phi,h}*\varphi),$$
where $T_{h,\eps}$ is defined as follows: Given $\lambda \in \R$, $T_{h,\eps}(\lambda) = \psi(\lambda)$  where $\psi$ 
is the solution of the ODE
$$ \begin{cases}
    \psi_t  &= - \frac{1}{\eps^2 } W'(\psi) \\
    \psi(0) &= \lambda,
   \end{cases}
$$
and $W$ a double well potential with wells located at $\psi = 0$ and $\psi =1 $. Note that
if $\varphi = \1_{\Omega}$ is a characteristic function, then
  $$\lim_{\eps \to 0 } G_{h,\eps}  \1_{\Omega} = G^{+}_h \1_{\Omega} = G^{-}_h \1_{\Omega} $$

\section{Numerical simulations}

In the previous section, we proved a consistency result for a Bence Merriman
Osher-type algorithm. Here we numerically investigate the convergence proper-
ties of a related scheme, based on a phase-field discretization. We explained above
why we did not directly implement a BMA algorithm. In the next paragraph, we 
describe the phase-field algorithm based on the operator $\tilde{\Delta}_{\phi}$.


\subsection{The $\tilde{\Delta}_\phi$-phase field model and its discretisation}

As an approximation to the anisotropic Allen-Cahn equation~(\ref{eq:ac_ani}), we consider
the following phase-field model
\begin{eqnarray} \label{eqn:phase_field_model}
 \begin{cases}
    u_t = \tilde{\Delta}_{\phi} u - \frac{1}{\eps^2} W^{'}(u) \\
    u(x,0) = q \left( \frac{\textup{dist}(x,\partial E)}{\eps} \right)
   \end{cases}
\end{eqnarray}
We also report tests, where we estimate the $L^1$-error on anisotropic Wulff sets 
(the sets which minimize the anisotropic perimeter under a volume constraint).
To impose volume conservation, we consider a conserved phase-field model,
of the form 
 \begin{eqnarray}  \label{eqn:phase_field_model_conserved}
 \begin{cases}
    u_t(x,t) = \tilde{\Delta}_{\phi} u(x,t) - \frac{1}{\eps^2} W^{'}(u(x,t)) + \frac{1}{\eps} \lambda(t) \sqrt{2 W(u(x,t))},\\ 
    u(x,0) = q \left( \frac{\textup{dist}(x,\partial E)}{\eps} \right).
   \end{cases}
\end{eqnarray}
The parameter
$$\lambda(t) =  \frac{\int_{\R^d}W^{'}(u(x,t))dx }{ \eps \int_{\R^d}\sqrt{2 W(u(x,t))}dx},$$
can be seen as a Langange multiplier, which preserves the mass of $u$.
See~ \cite{BrasselBretin} where schemes of this form have been studied for 
isotropic mean curvature with a volume constraint.

We now describe the numerical method we use for solving the PDE's \eqref{eqn:phase_field_model} and  \eqref{eqn:phase_field_model_conserved}. 
Several studies of classical numerical schemes for the Allen--Cahn equation have already been conducted in the past: see for instance,
\cite{Deckelnick2005,Paolini1997,Chen1998,Chen1998a,VerdiPaolini,Garcke1,Garcke2}.
Here, the computational domain is the fixed box $Q = [-1/2,1/2]^d \subset \R^d$, $d = 2,3$.
The initial datum is $u_0 = q(\frac{\textup{dist}(x, \partial \Omega_0}{\eps})$,
where $\Omega_0$ is a smooth bounded set strictly contained $Q$.
We assume that during the evolution, the set 
$\Omega_{\eps}(t) := \{u_\eps(x,t) = 1/2\}$ remains strictly inside $Q$,
so that we may impose periodic boundary conditions on $\partial Q$.

Our strategy consists in representing $u$ as a Fourier series 
in $Q$, and in using a splitting method. First, one applies the
diffusion operator, which given the form of $\tilde{\Delta}_\phi$,
merely amounts to a multiplication in the Fourier space.
The interesting feature of our approach is that this step is fast
and very accurate. Next, the reaction term is applied.

More precisely, $u_\eps(x,t_n)$ at time $t_n = t_0 + n \delta t$
is approximated by
\begin{eqnarray*}
u_\e^P(x,t_n) &=& \sum_{\max_{1 \leq i \leq d}|p_i|  \leq P}
u_{\e,p}(t_n) e^{2i\pi p\cdot x}.
\end{eqnarray*}
In the diffusion step, we set
\begin{eqnarray*}
u_\e^P(x, t_n +1/2) = \sum_{\max_{1 \leq i \leq d}|p_i|  \leq P}
u_{\e,p}(t_n) e^{-4 \pi^2 \delta t \, \phi^o(p)^2}
e^{2i\pi p\cdot x}.
\end{eqnarray*}
We then integrate the reaction terms
\begin{eqnarray*}
u_\e^P(x,t_n+1) &=& u_\e^P(x,t_n+1)
- \displaystyle{\delta t}{\e^2} W^\prime_{i,\e}(u_\e^P(x, t_n +1/2)).
\end{eqnarray*}
In practice, the first step is performed via a fast Fourier transform,
with a computational cost $O(P^d \ln(P))$.

The corresponding numerical scheme turns out to be stable 
when solving~\eqref{eqn:phase_field_model}, under the condition
$\delta t \leq M\e^2$,
where $M =  \left[ \sup_{t \in [0,1]} \left\{  W^{''}(t)\right\} \right]^{-1}$.
Numerically, we observed that this condition is also sufficient for the
conserved potential in~(\ref{eqn:phase_field_model_conserved}).
In the simulations, we used $W(s) = \frac{1}{2} s^2(1-s)^2$.

The isotropic version of our splitting scheme has been studied in~\cite{BrasselBretin}.
It is shown there that this scheme converges with the same rate as phase-field approximations
based on a spatial discretization by finite differences or by finite elements. 
Its advantages are greater precision, and unconditionnal stability.


\subsection{Test of convergence in dimension $2$}

We consider following anisotropic densities
 $$ \begin{array}{l} 
     \phi^o_1(\xi)=  \| \xi \|_{\ell^4} = \left( |\xi_1|^4 +  |\xi_2|^4 \right)^{\frac{1}{4}}
     \\
     \phi^o_2(\xi)= \| \xi \|_{\ell^{\frac{4}{3}}} = \left( |\xi_1|^{\frac{4}{3}} +  |\xi_2|^{\frac{4}{3}} \right)^{\frac{3}{4}}
     \\
          \phi^o_3(\xi)= \left( |\xi_1|^{1,001} + |\frac{1}{2}\xi_1 + \frac{\sqrt{3}}{2} \xi_2|^{1,001} + |\frac{1}{2}\xi_1 - \frac{\sqrt{3}}{2} \xi_2|^{1,001} \right)^{\frac{1}{1,001}}. 
    \end{array}
 $$
See figure~\eqref{fig:forme_wulff_frank_ani1} for a representation of their
Wulff sets $B_{\phi_i}$ and Frank diagrams $B_{\phi^o_i}$. 

\begin{figure}
\begin{center}  \label{fig:forme_wulff_frank_ani1}
\includegraphics[width=4cm]{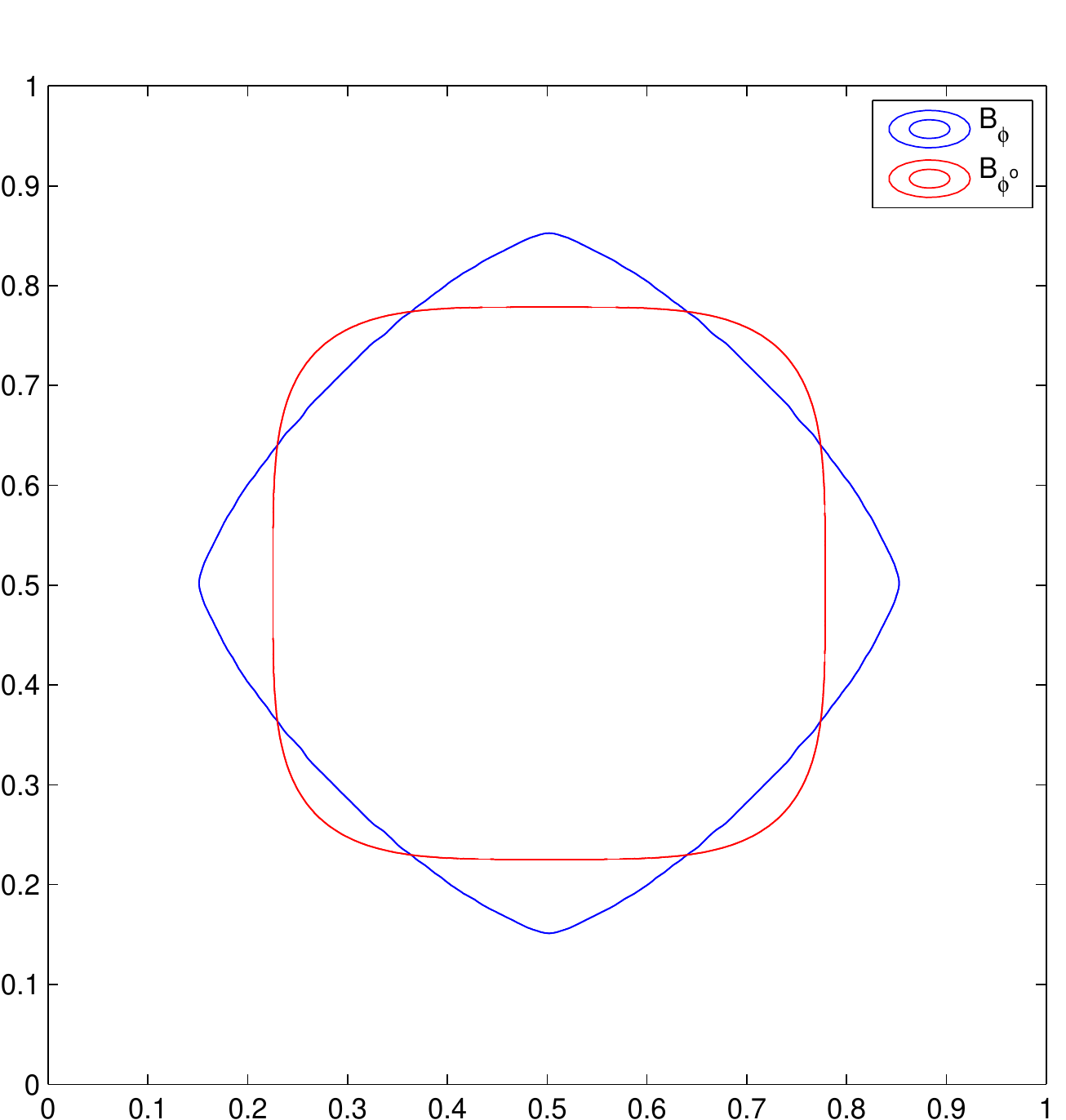}
\includegraphics[width=4cm]{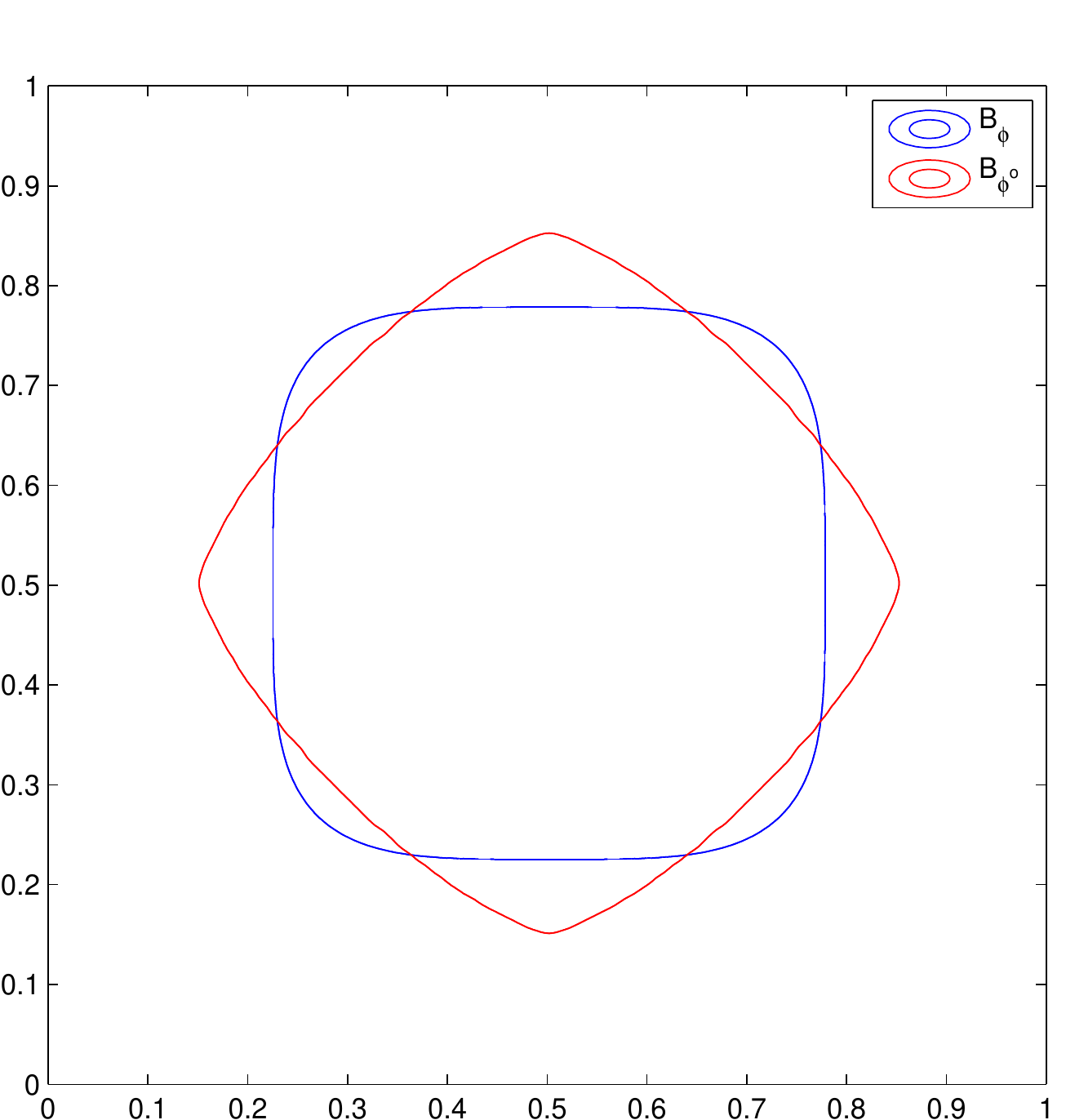}
\includegraphics[width=4cm]{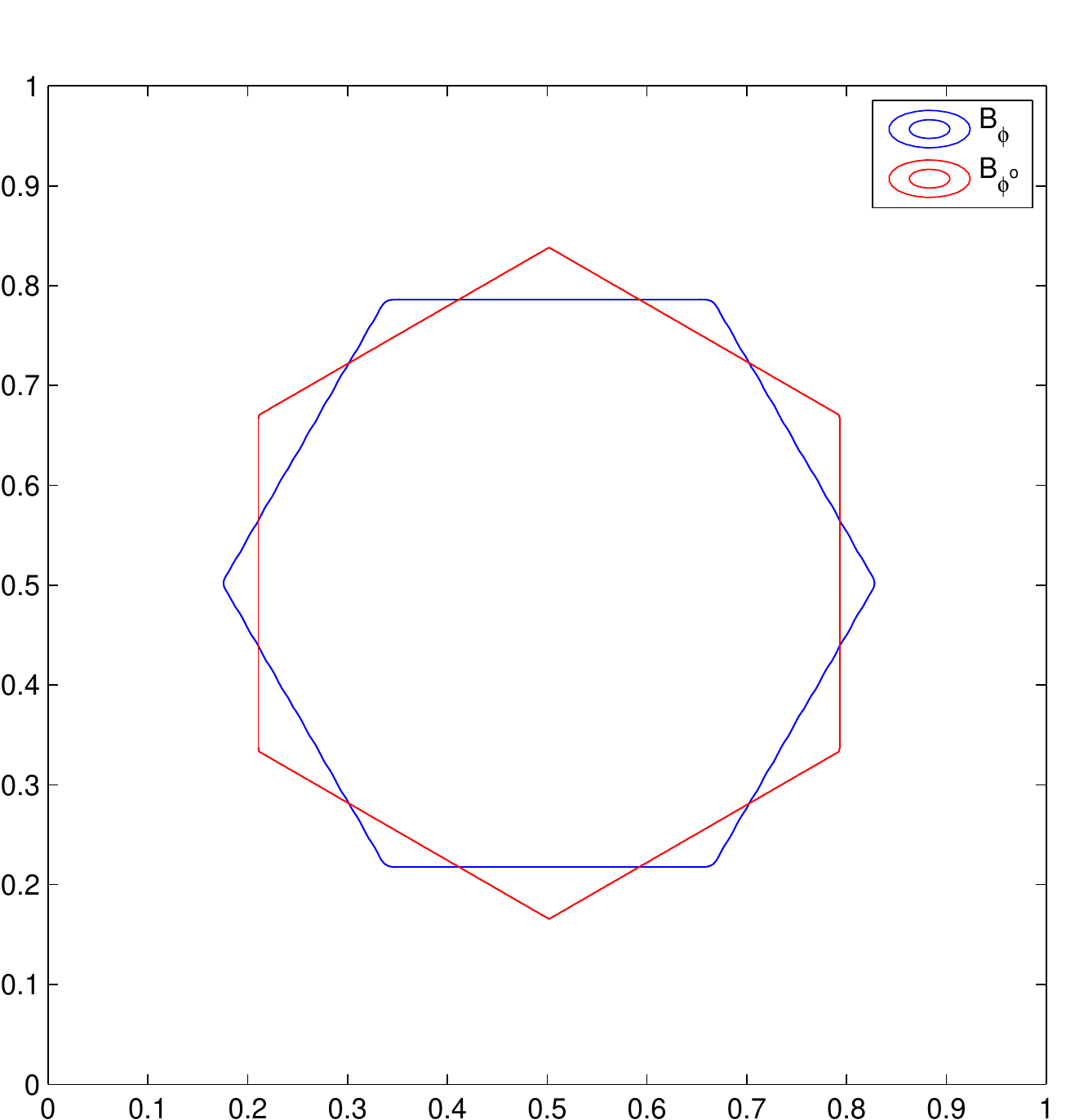}
\caption{ Wulff Set (blue) and Frank diagram (red) for the anisotropic densities 
$(\phi_1,\phi^o_1)$, $(\phi_2,\phi^o_2)$ and $(\phi_3,\phi^o_3)$    }
\end{center}
\end{figure}


{1. \it Evolution from a Wulff set.}~\\
We consider the equation 
 $$ \begin{cases}
      \partial_t u &= \tilde{\Delta}_{\phi} u - \frac{1}{\eps^2} W'(u) \\
      u(0,x) &= q \left( dist(x, \Omega_0) /\eps^2 \right),  
    \end{cases}
$$ 

where the initial set $\Omega_0$ is a Wulff set of radius $R_0 = 0.25$ 
$$ 
\Omega_0 = \Set{x \in \R^2}{ \phi(x) \leq R_0}.
$$ 
It is well known that the set $\Omega(t)$
obtained from $\Omega_0$ through evolution by anisotropic mean curvature
is a  Wulff set with radius $R(t) = \sqrt{R_0^2 - 2t}$, 
which decreases to a point at the extinction time $t_{ext} = \frac{R_0^2}{2}$.
In these simulations, the number of Fourier modes is $P = 2^8$, 
and the time step and phase-field parameter are chosen to be 
$\delta_t=1/P^2$ and $\epsilon = 1/P$. 
On figure~(\ref{evol_ani}) the interface $\Omega(t)$ is plotted at different times. 
We observe a good agreement between the theoretical and computed curves, in spite of
the smoothening of the corners of the latter.

\begin{figure} 
 \begin{center} \label{evol_ani}
 \includegraphics[width=4cm]{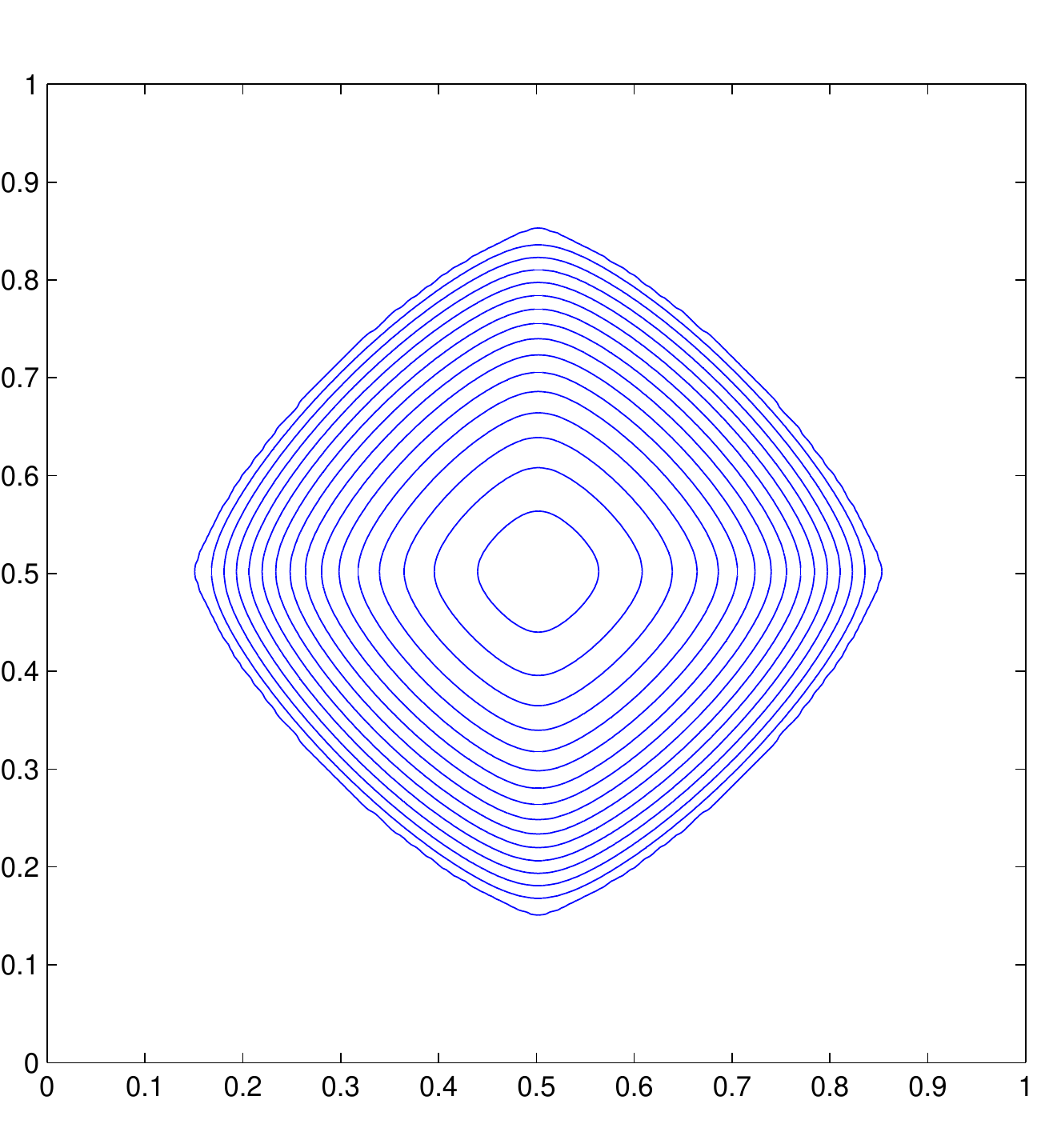}
 \includegraphics[width=4cm]{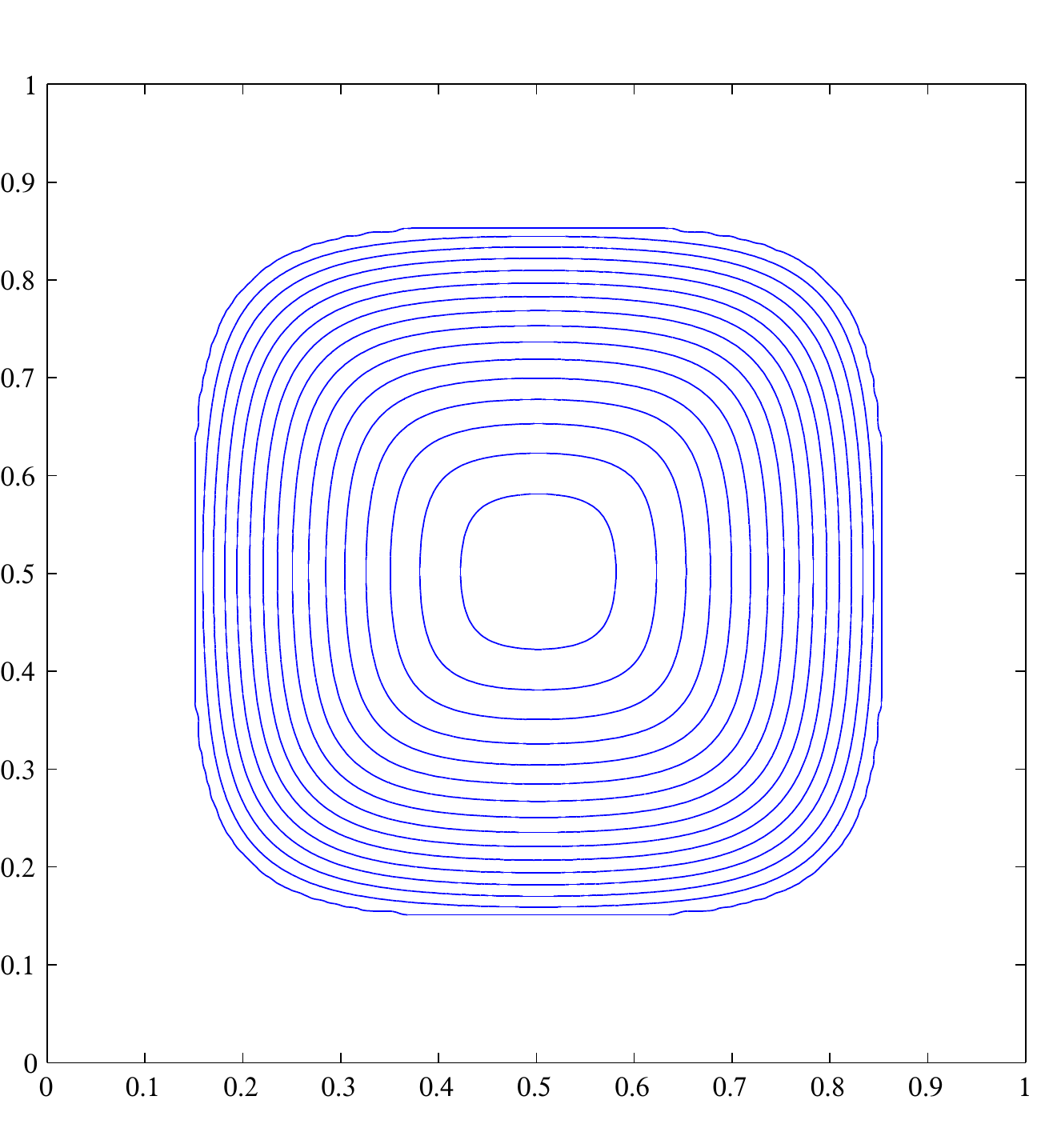}
 \includegraphics[width=4cm]{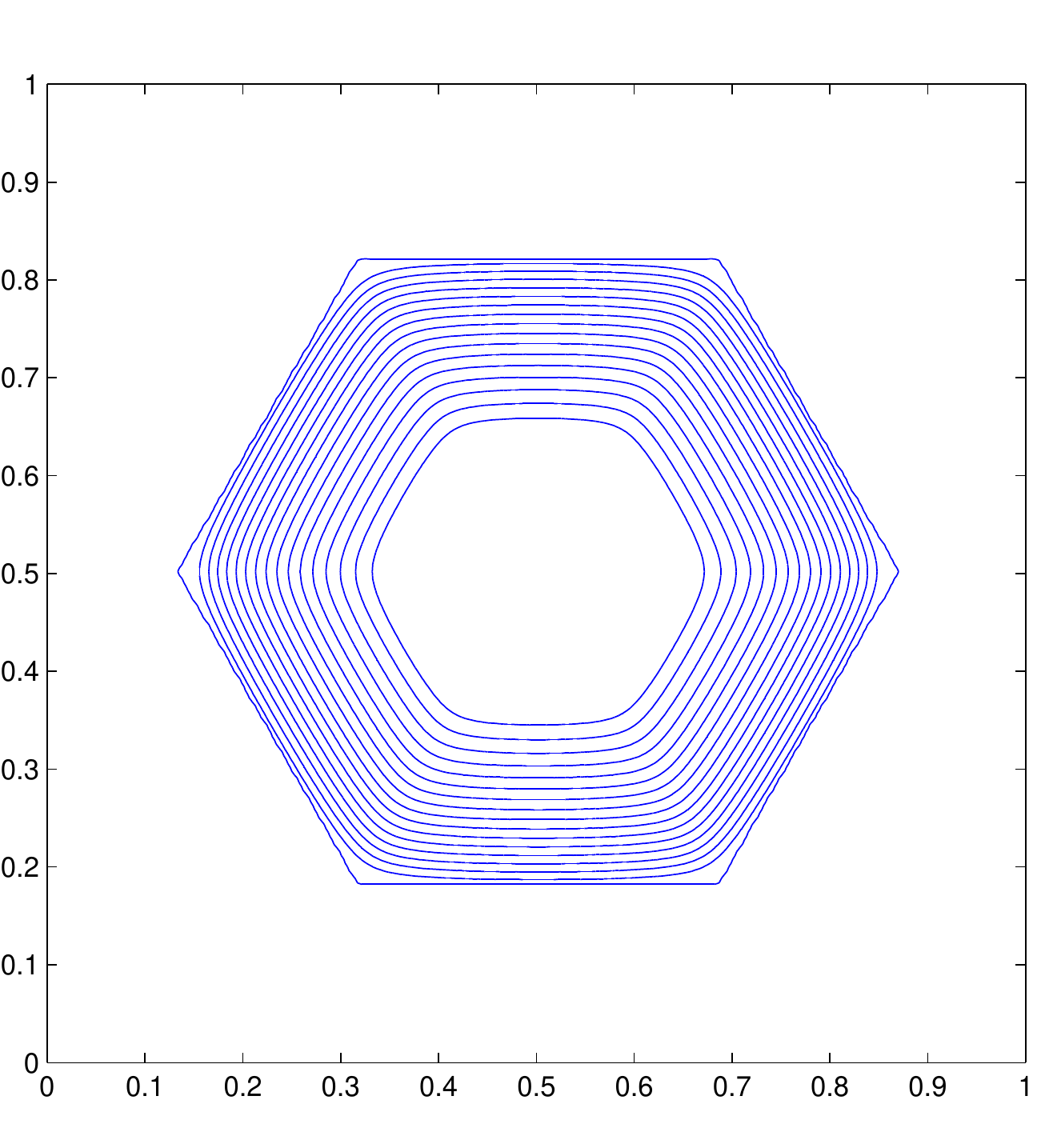}
  \caption{ $\Omega(t)$ at different times for the anisotropic densities $\phi_1, \phi_2, \phi_3$}
  \label{fig:evol_ani_eps_1}
 \end{center}
\end{figure}


{2.\it Convergence to the Wulff set }~\\

This smoothening of corners actually depends on the thickness $\eps$ of the diffuse interface,
as evidenced in the next series of tests, 
of evolution by anisotropic mean curvature under a volume constraint 
according to~(\ref{eqn:phase_field_model_conserved}).
The initial set $\Omega_0$ is a circle centered at $0$, of the same volume as
$\Omega^{*} = \Set{x \in \R^d}{ \phi(x) < R_0}$. 
The evolution $\Omega(t)$ from $\Omega_0$ is expected to converge 
to the Wulff set $\Omega^{*}$.

Figures~\ref{evol_ani2}-a,b represent the final sets $\Omega^{*}_{\epsilon}$  obtained from the resolution of anisotropic Allen-Cahn equation, with respective
anisotropic densities $\phi_1$ and $\phi_2$, and for differents value of $\epsilon$. 
We observe that the smaller $\eps$, the better the approximation of the Wulff set.
In figure~\ref{evol_ani2}-c, the $L^1$ error
$$ \eps \to  \| \1_{\Omega^{*}} - \1_{\Omega^{*}_{\epsilon}} \|_{L_1(\R^d)},$$
is plotted in a logarithmic scale.
This graph indicates that this error is of order $\eps$.

\begin{figure} 
 \begin{center} \label{evol_ani2}
 \includegraphics[width=4cm]{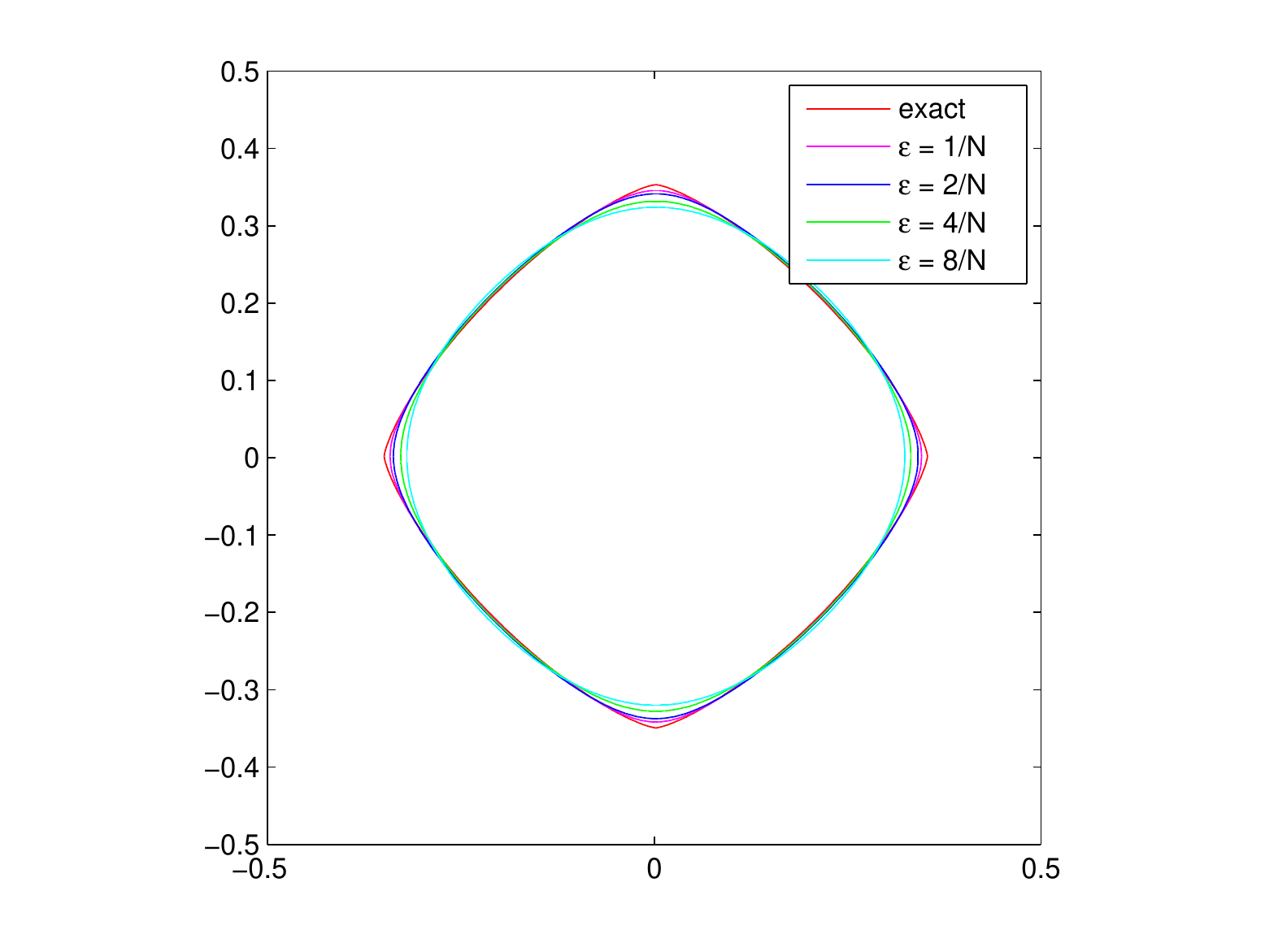}
 \includegraphics[width=4cm]{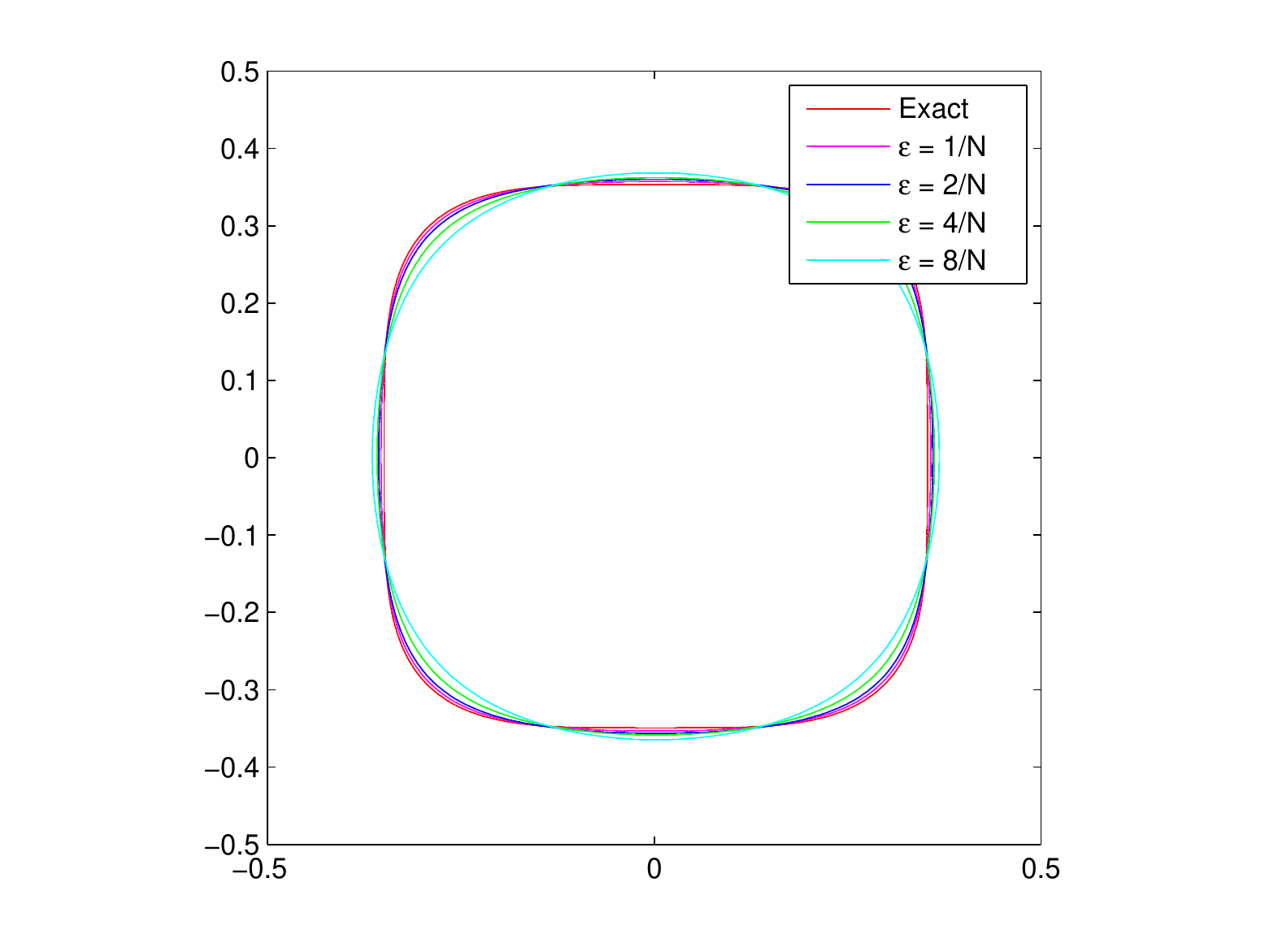}
 \includegraphics[width=4cm]{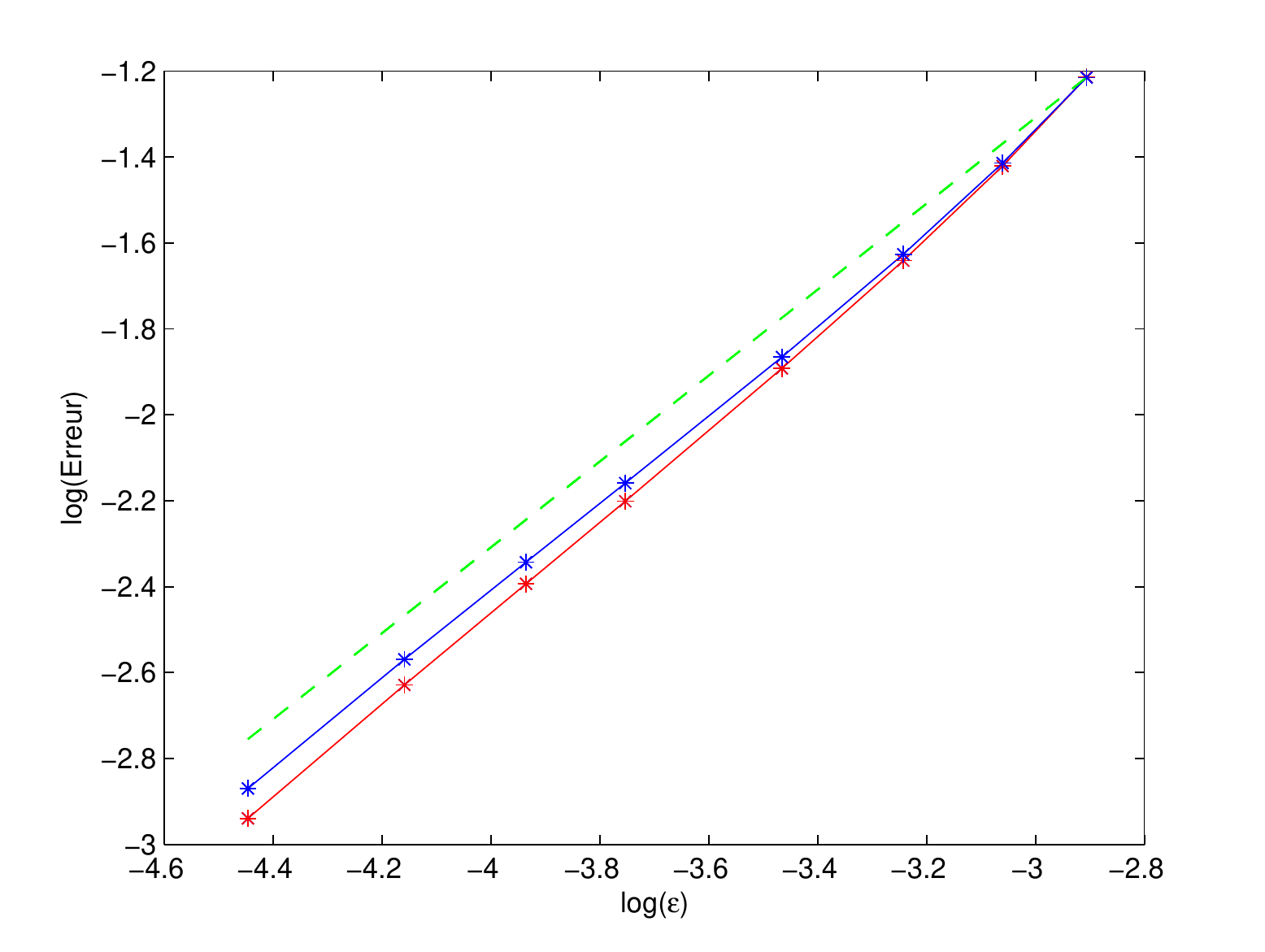}
  \caption{ From left to right :  $\Omega(t)$ at different times with anisotropy $\phi^o_1$,  
  $\Omega(t)$ at different times with anisotropy $\phi^o_2$, error estimate $\eps \to \| \1_{B_{\phi}^{\epsilon}} - \1_{B_{\phi,R_0}} \|_{L_1(\R^d)}$ in logarithmic scale ( $\phi^o_1$ in red and $\phi^o_2$ in blue)}
  \label{fig:evol_ani_eps_1}
 \end{center}
\end{figure}
  

\subsection{Some $3$D simulations}
As final illustrations, we consider the anisotropic densities  
$$ \begin{cases}
\phi^o_4(\xi) &=  \sqrt{\xi_1^2 + \xi_2^2} + |\xi_3|  \\
\phi^o_5(\xi) &=   |\xi_1| + |\xi_2| + |\xi_3|\\
   \end{cases}
$$ 
The corresponding Wulff sets and Frank diagrams are
plotted in figure~\eqref{fig:test3d_ani_phio}.
  
\begin{figure}[htbp]
 \begin{center}
 \includegraphics[width=3cm]{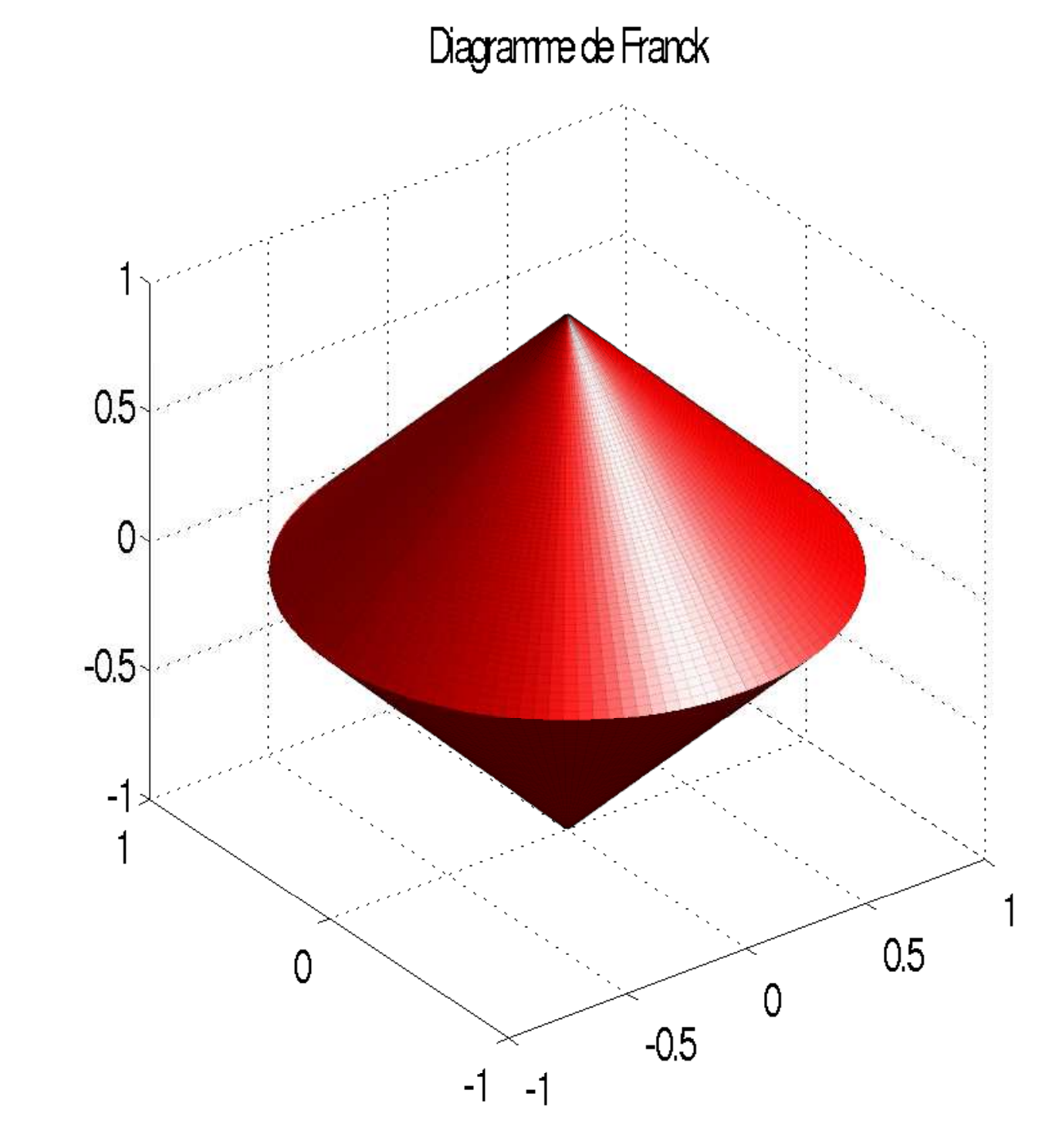}
 \includegraphics[width=3cm]{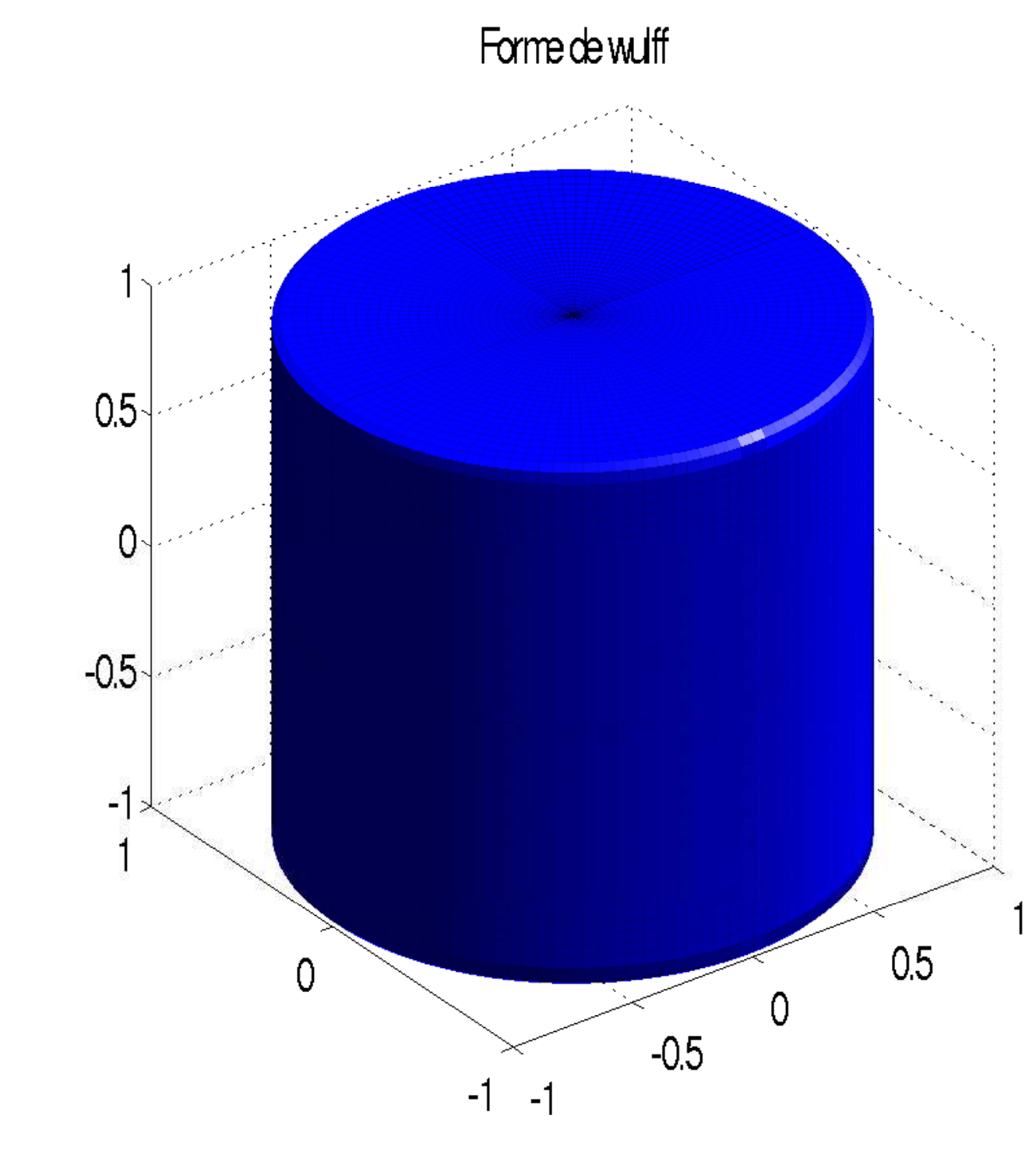}
  \includegraphics[width=3cm]{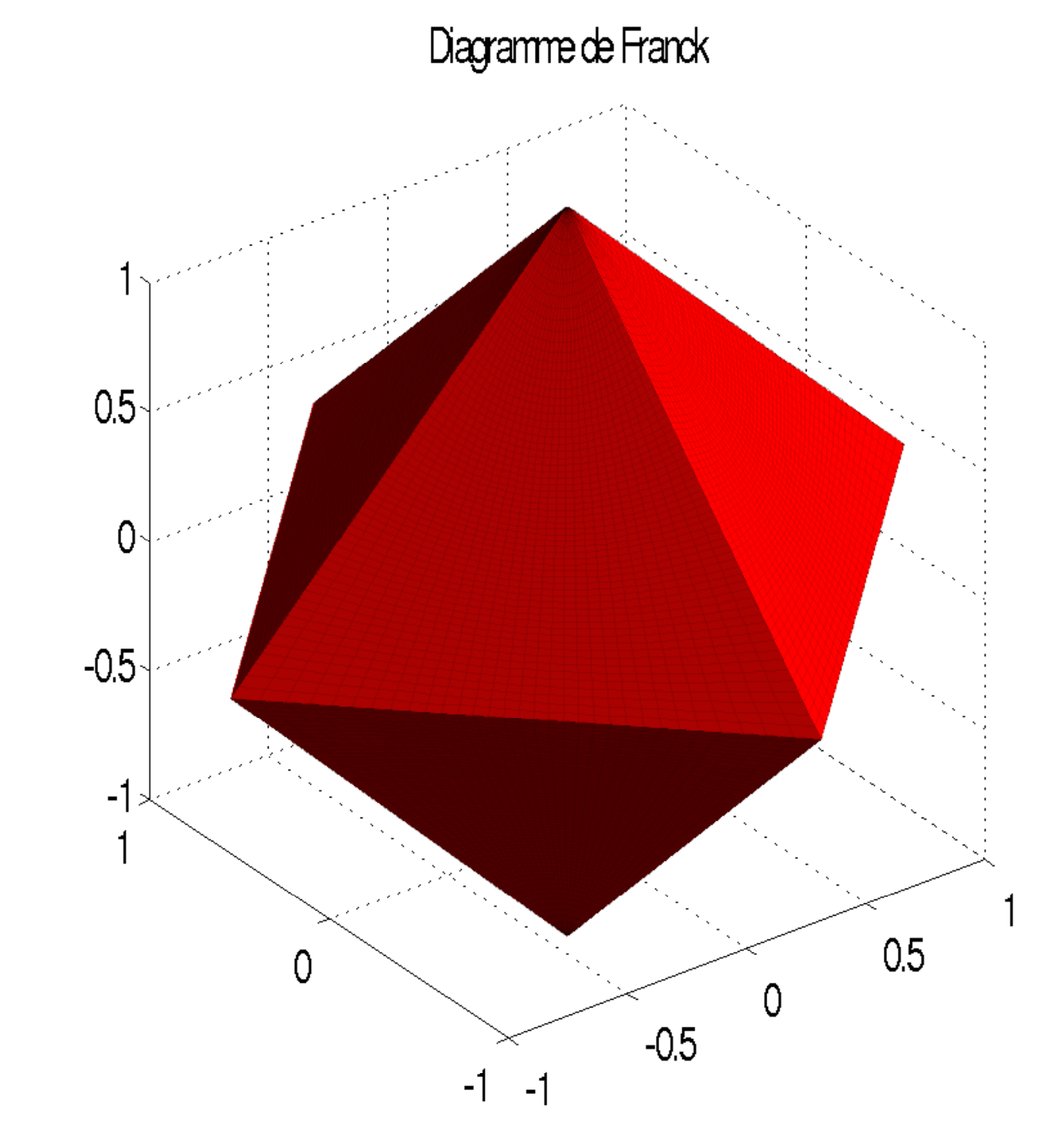}
  \includegraphics[width=3cm]{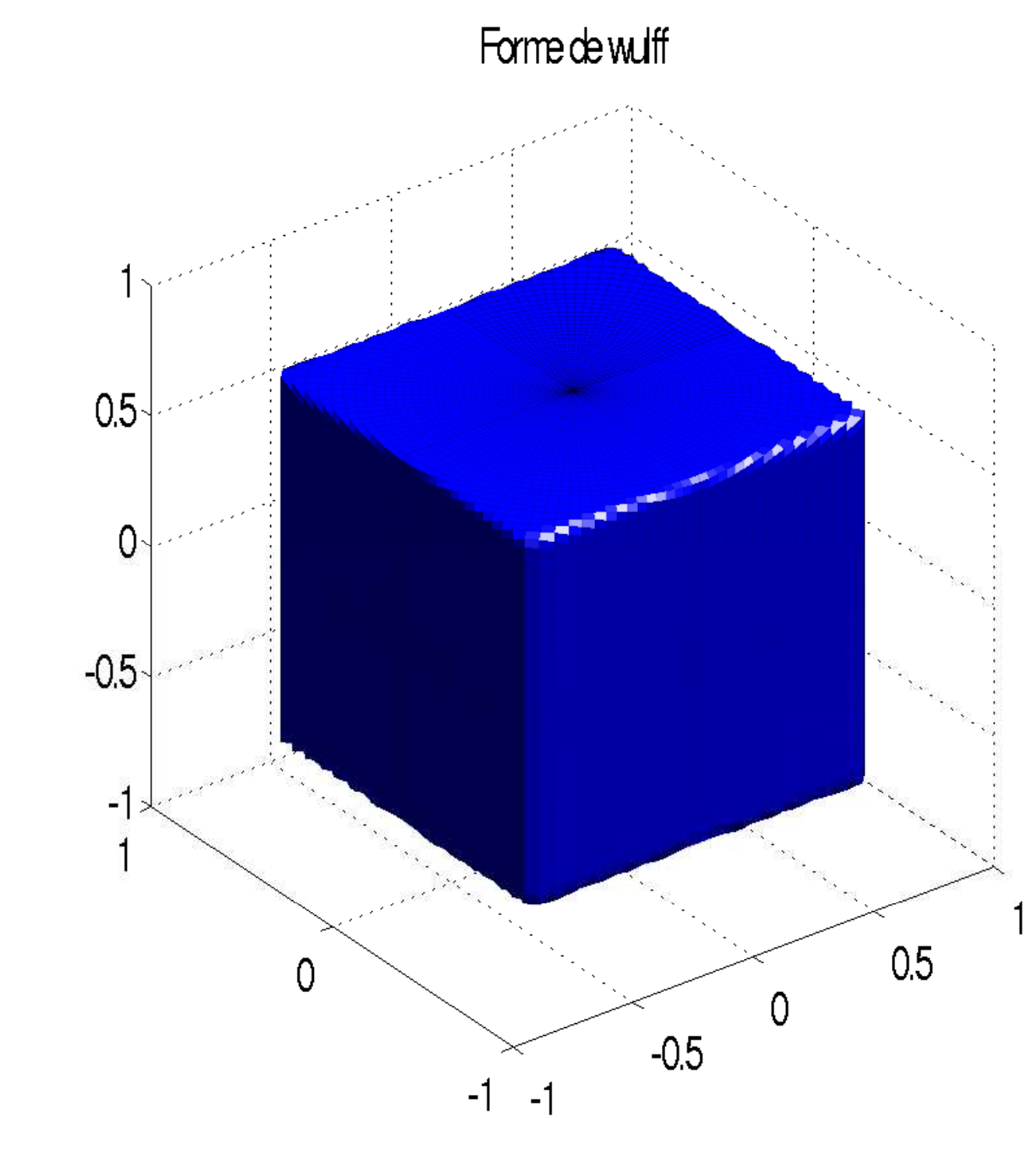}
  \caption{ Franck diagramm and  Wulff set :  $B_{\phi^o_4}$, $B_{\phi_4}$ , $B_{\phi^o_5}$, $B_{\phi_5}$ }
  \label{fig:test3d_ani_phio}
 \end{center}
\end{figure}

We report in figure~\eqref{fig:test3d_ani_tore2} (respectively in figure~\eqref{fig:test3d_ani_tore3}) the evolution by $\phi^o_4$ (resp. $\phi^o_5$) anisotropic mean curvature 
from an initial torus. 
The number of Fourier modes is $P = 2^7$, the time step and diffuse interface thickness
are $\delta_t=1/P^2$ and $\epsilon = 1/P$.

\begin{figure}[htbp]
 \begin{center}
 \includegraphics[width=4cm]{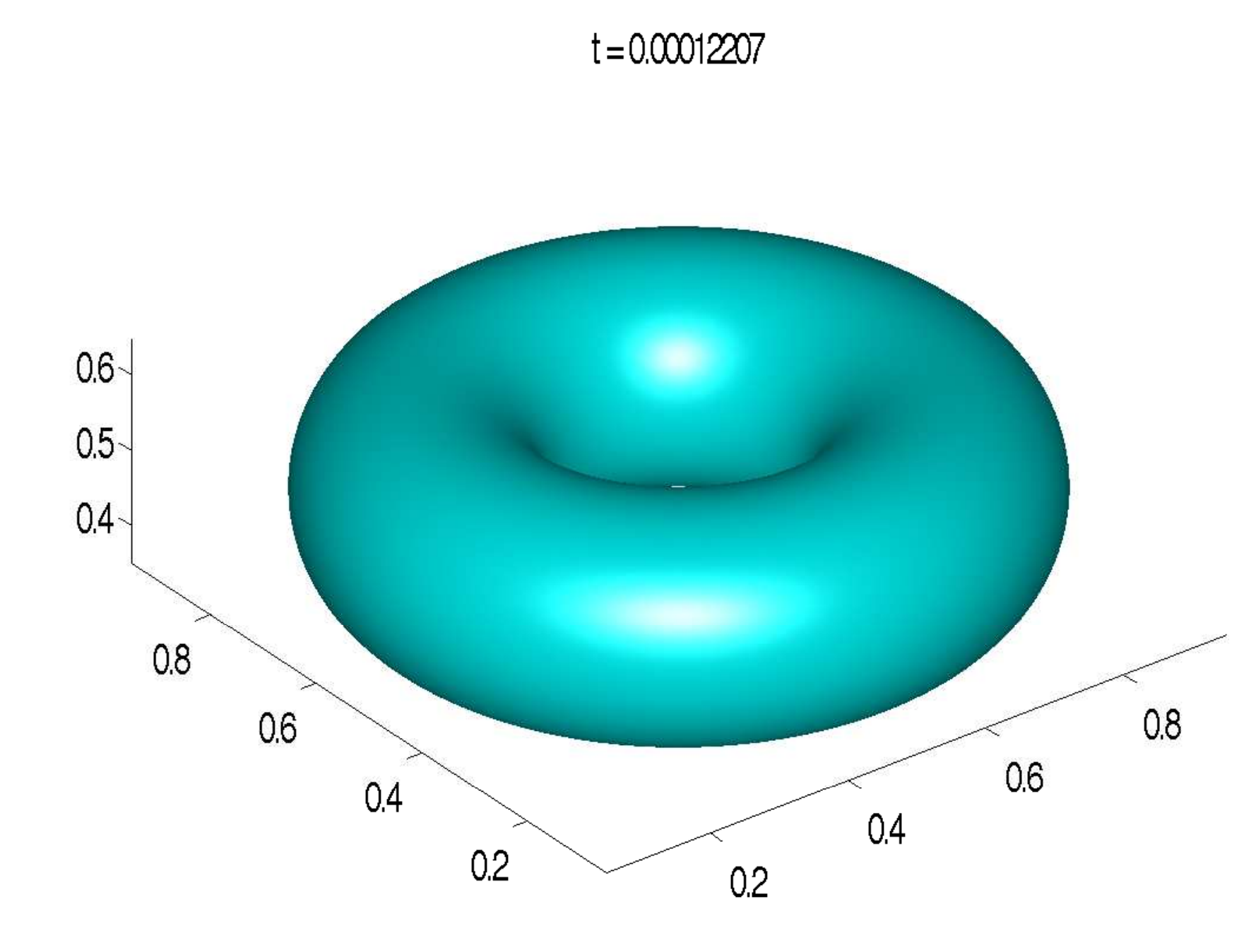}
\includegraphics[width=4cm]{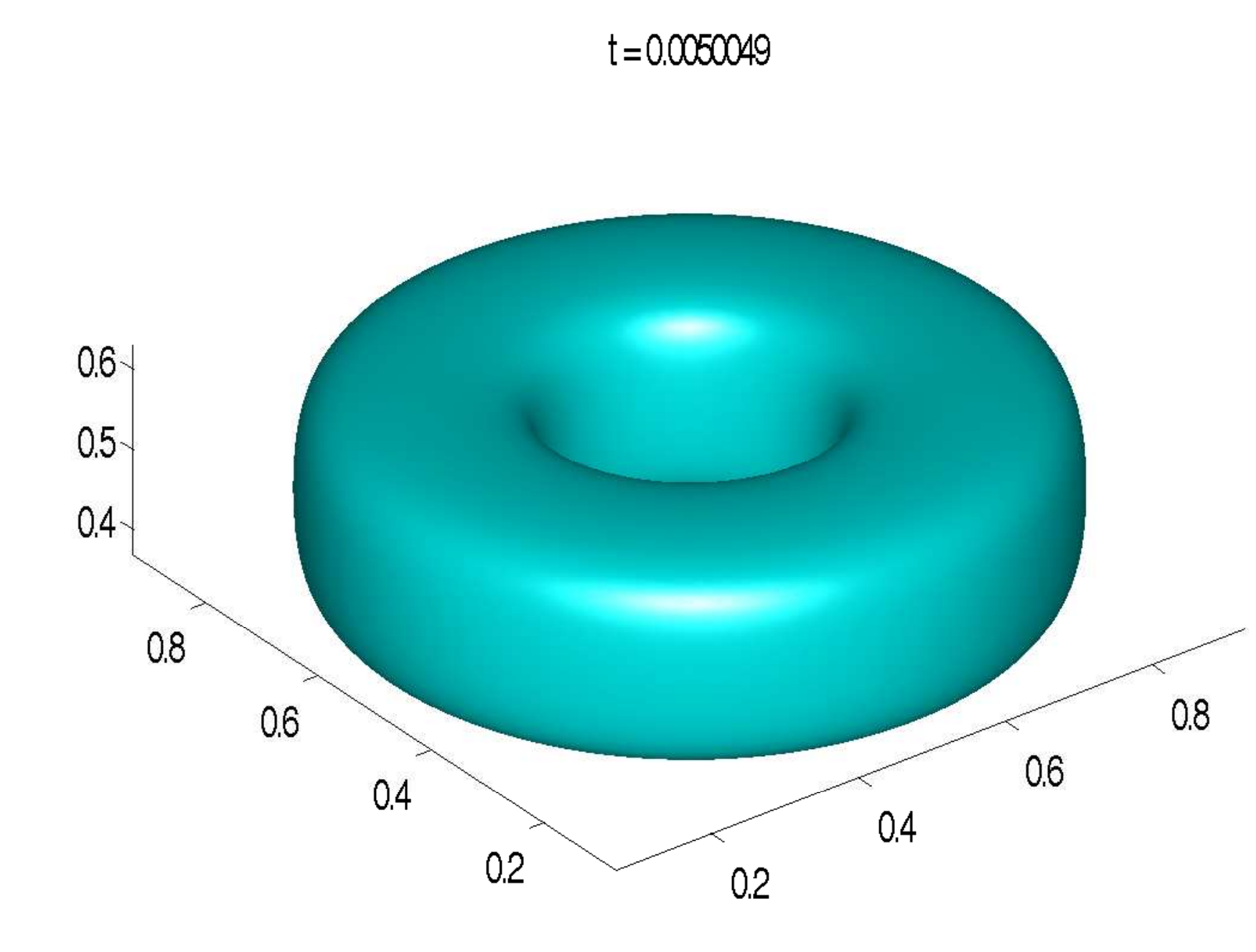}
\includegraphics[width=4cm]{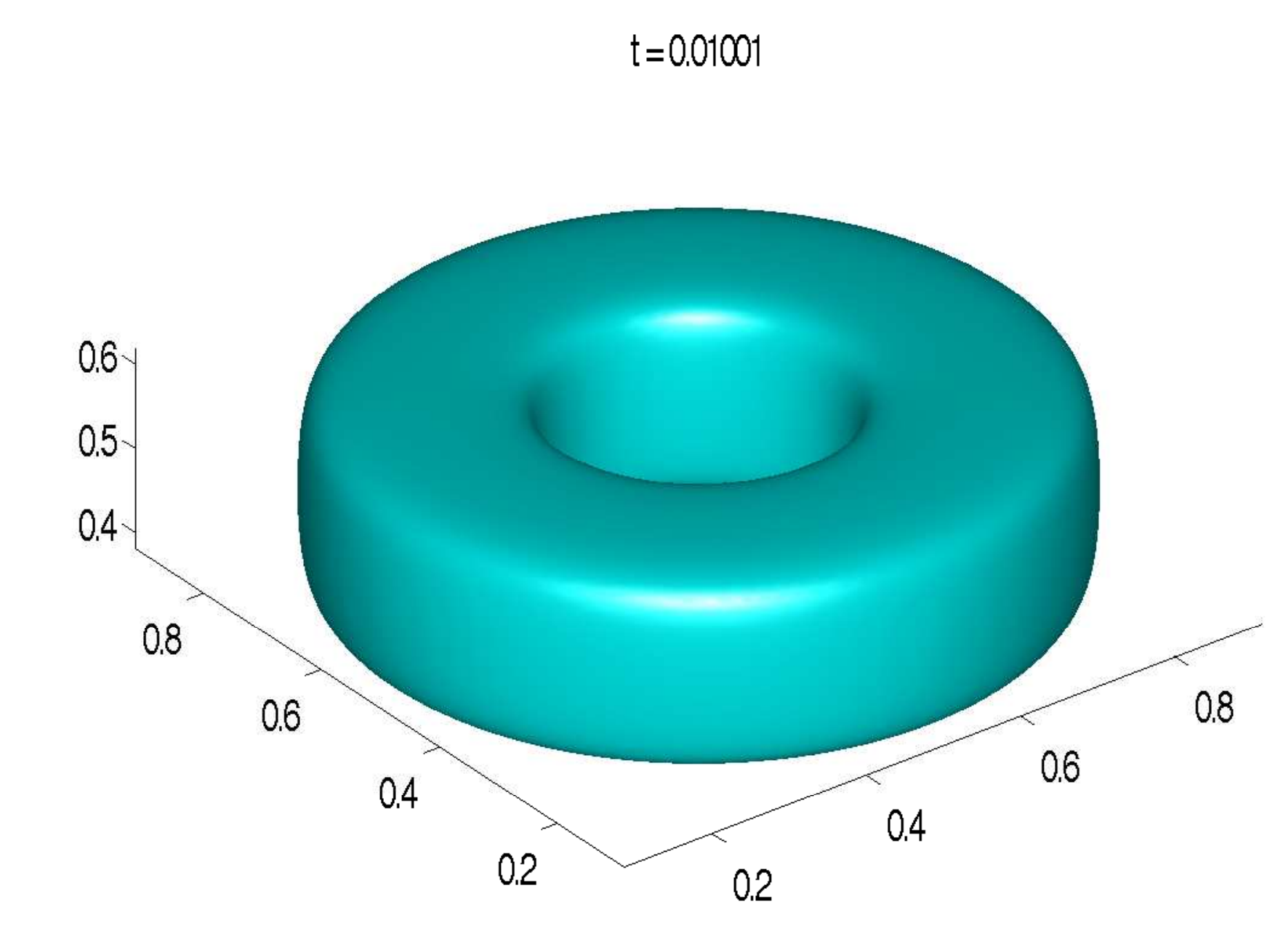}
\includegraphics[width=4cm]{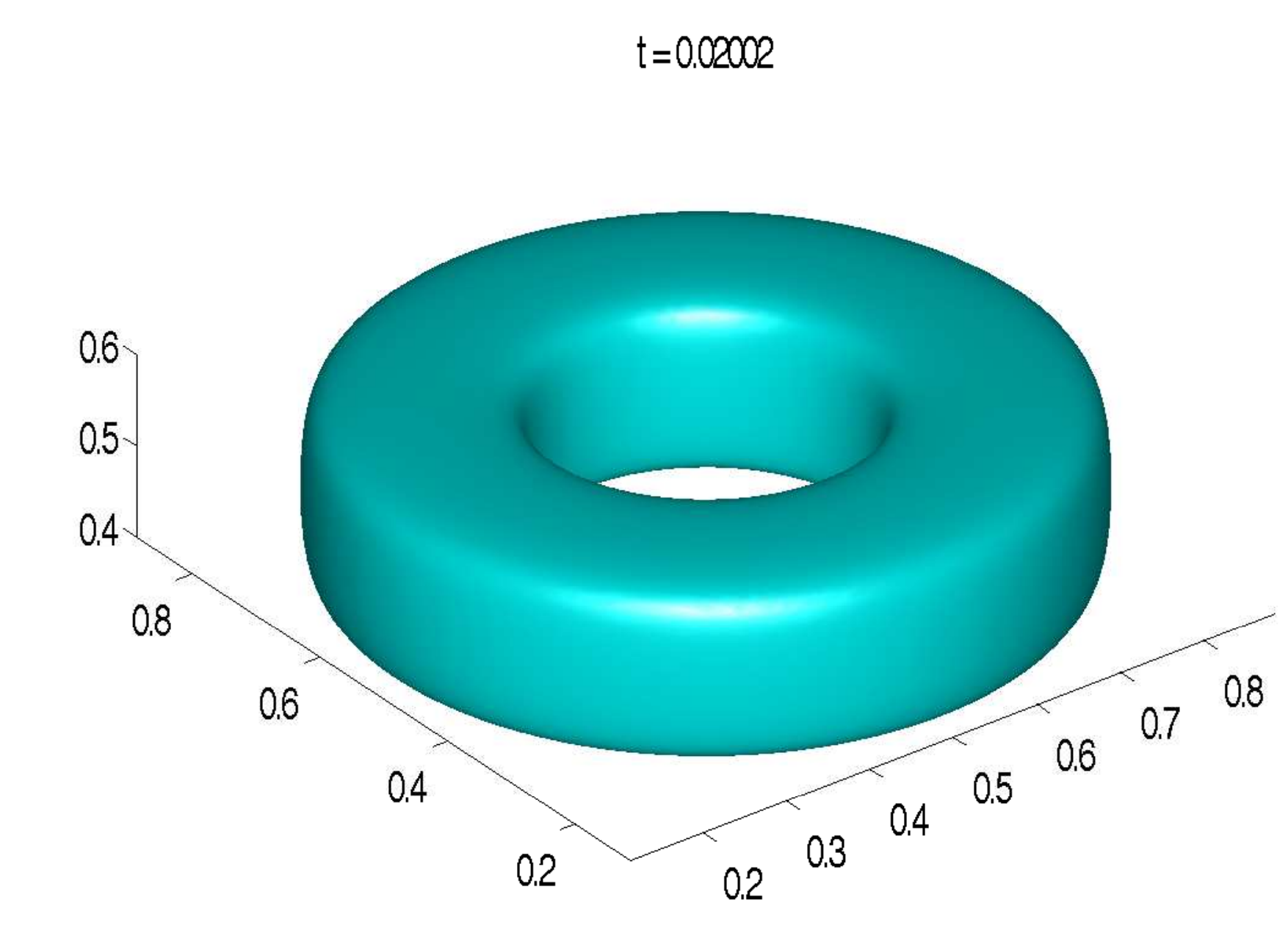}
\includegraphics[width=4cm]{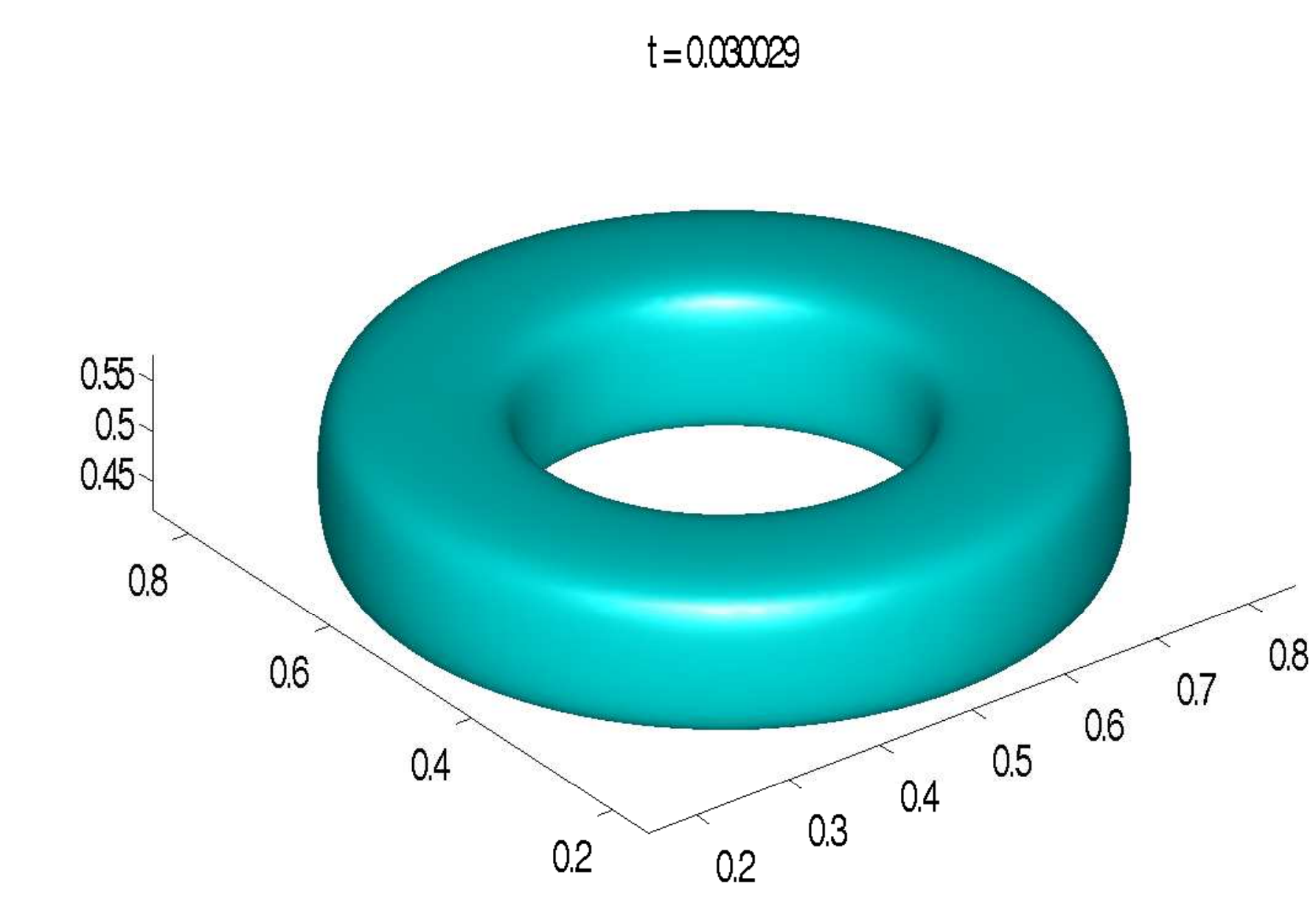}
\includegraphics[width=4cm]{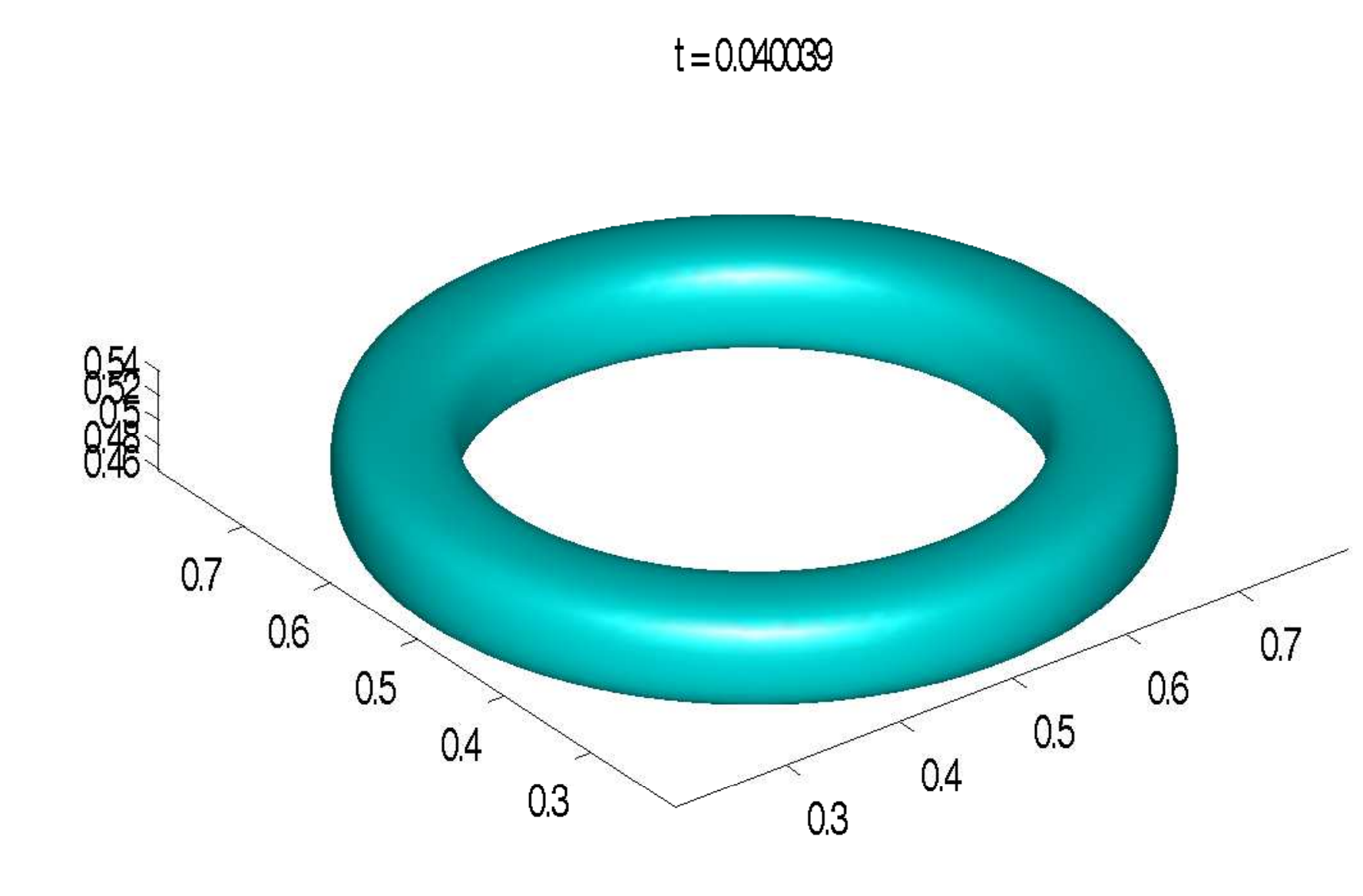}
  \caption{$\phi_4^o(\xi)$-evolution from an initial torus, at different times}
  \label{fig:test3d_ani_tore2}
 \end{center}
  \end{figure}
  
    \begin{figure}[htbp]
 \begin{center}
 \includegraphics[width=4cm]{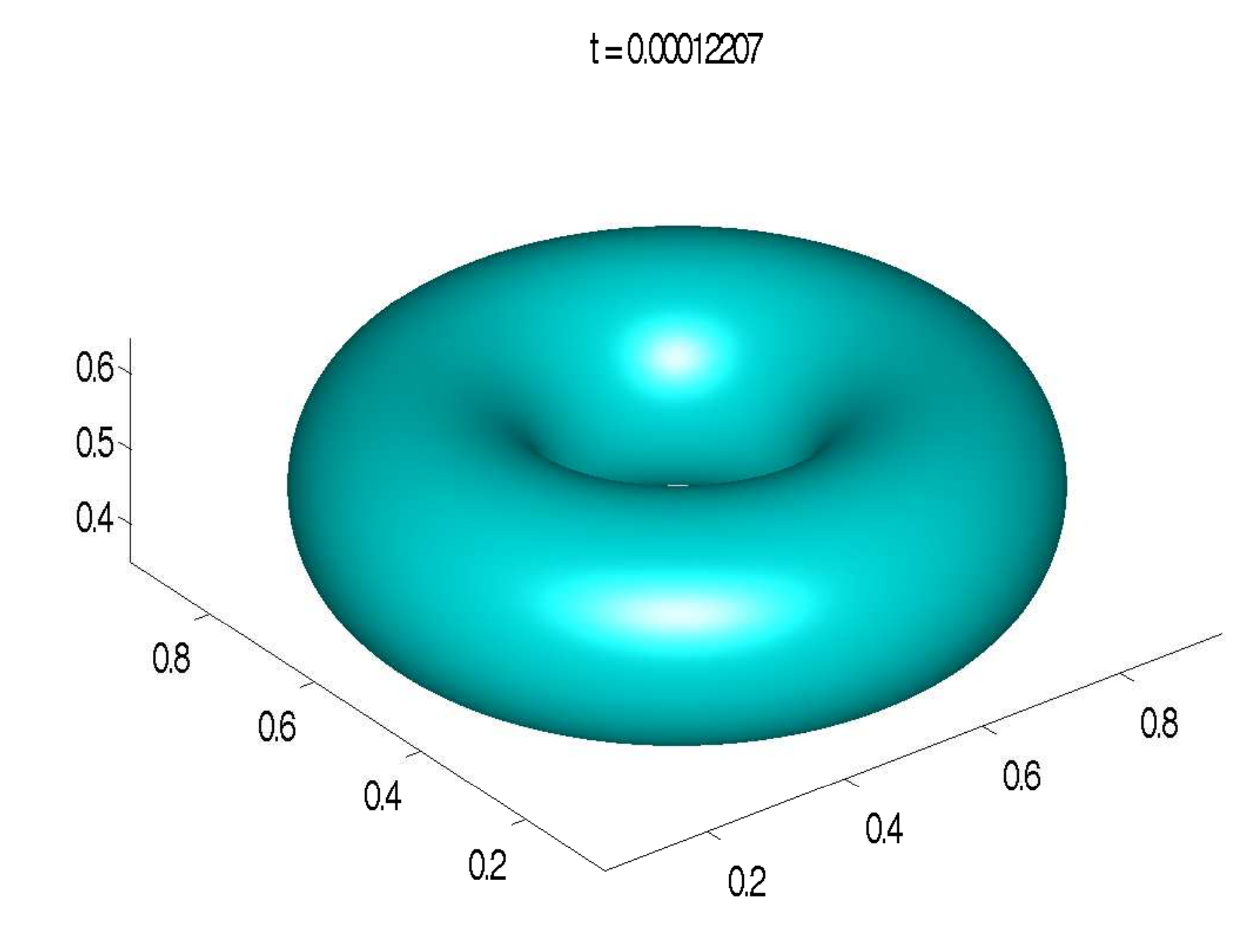}
\includegraphics[width=4cm]{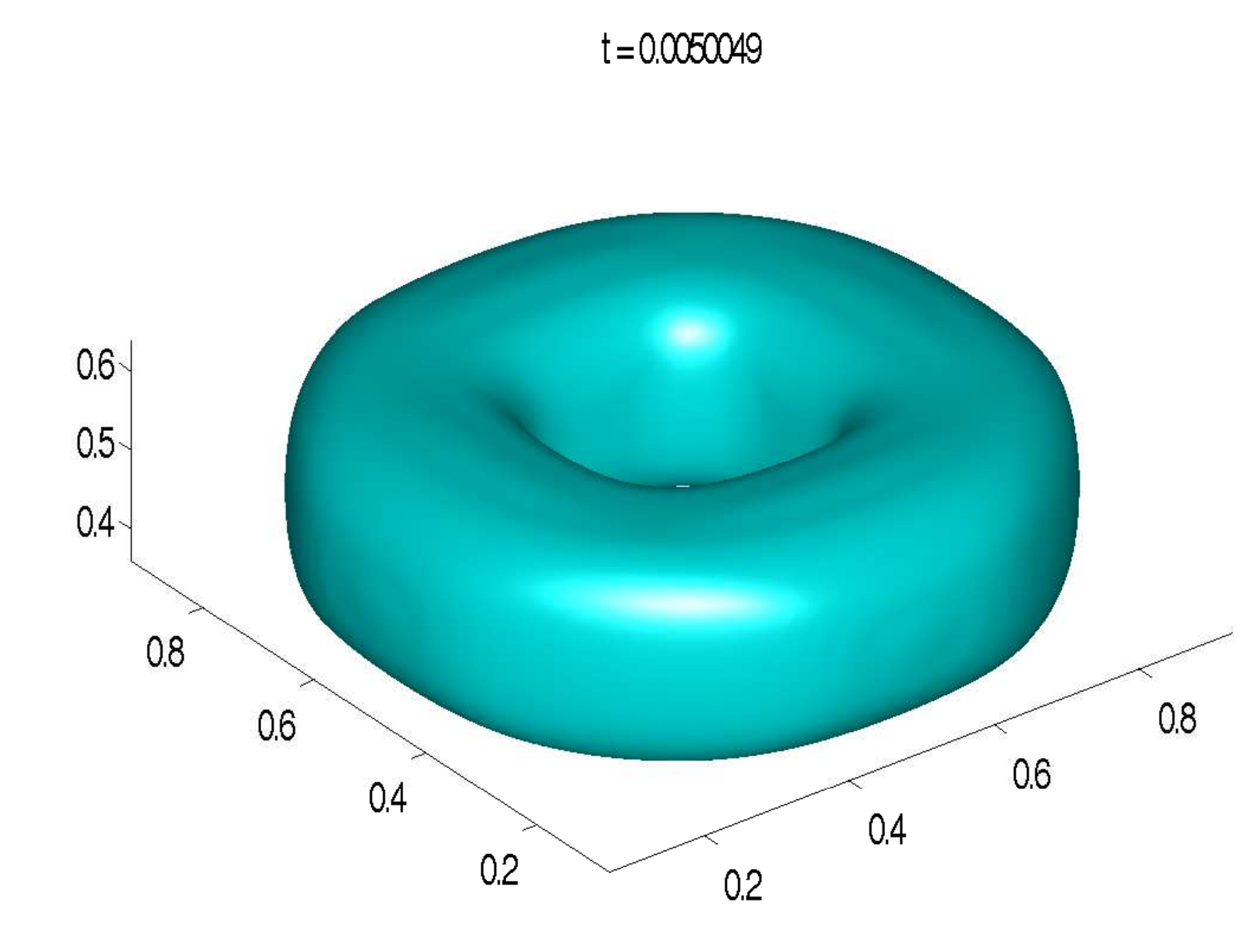}
\includegraphics[width=4cm]{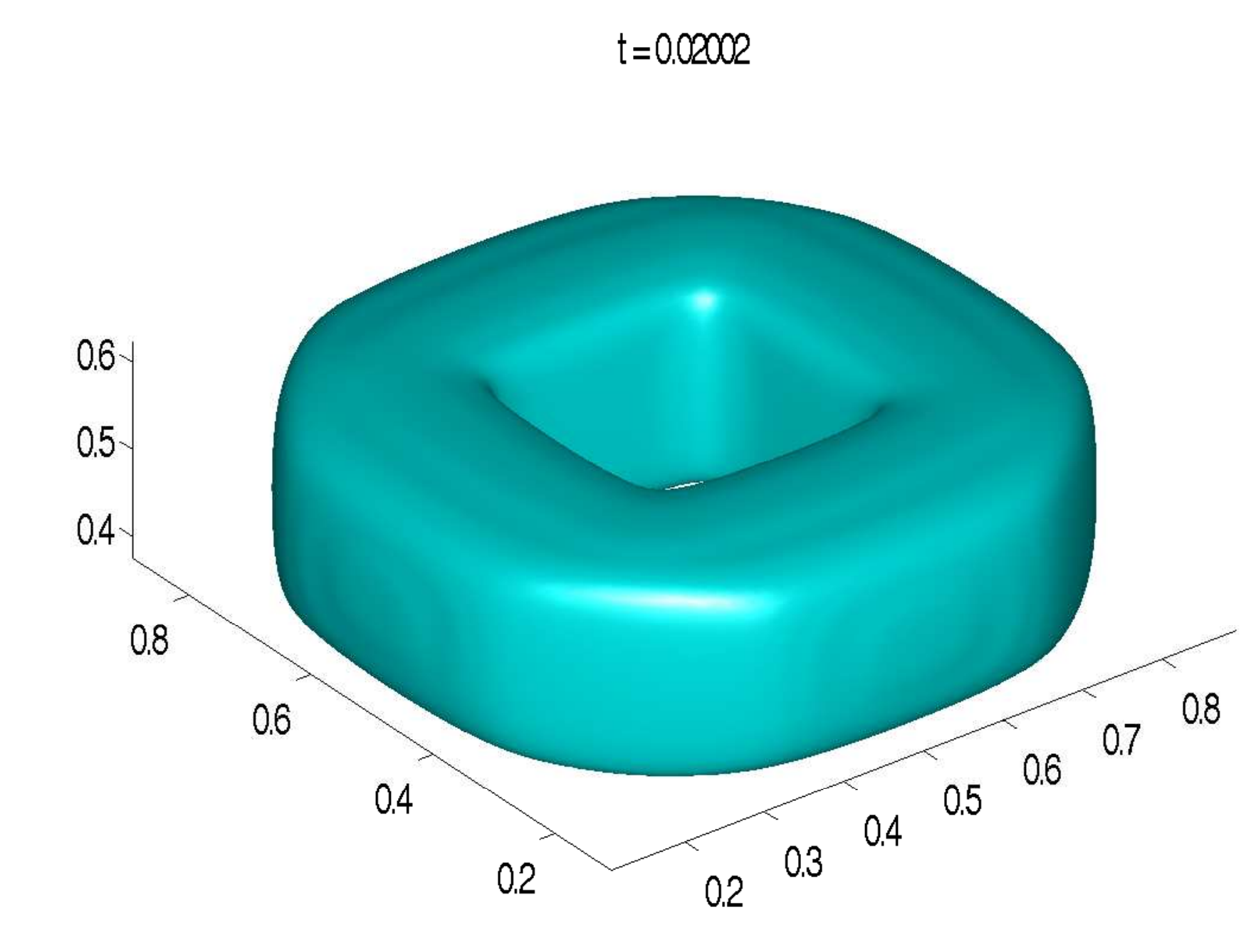}
\includegraphics[width=4cm]{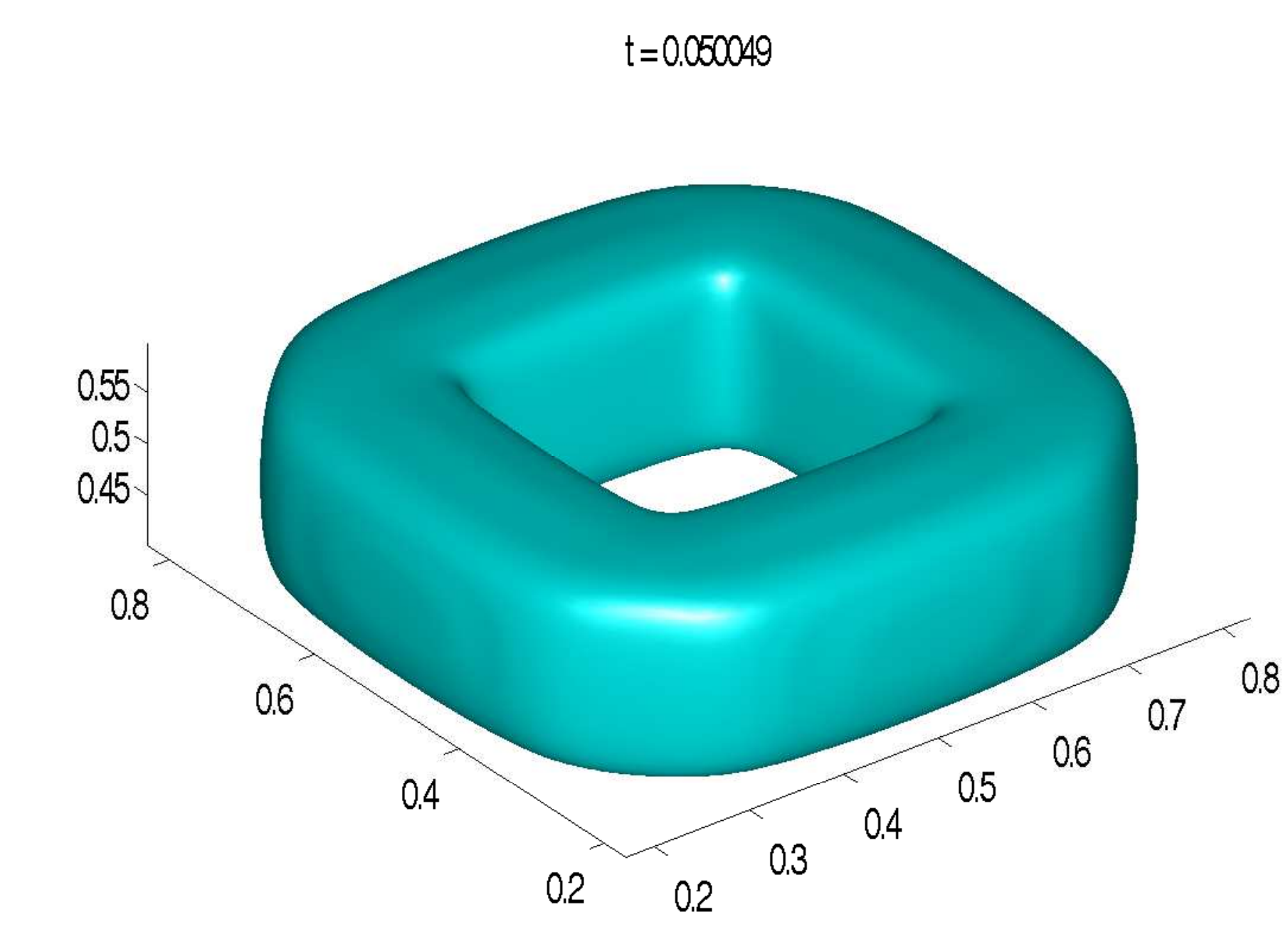}
\includegraphics[width=4cm]{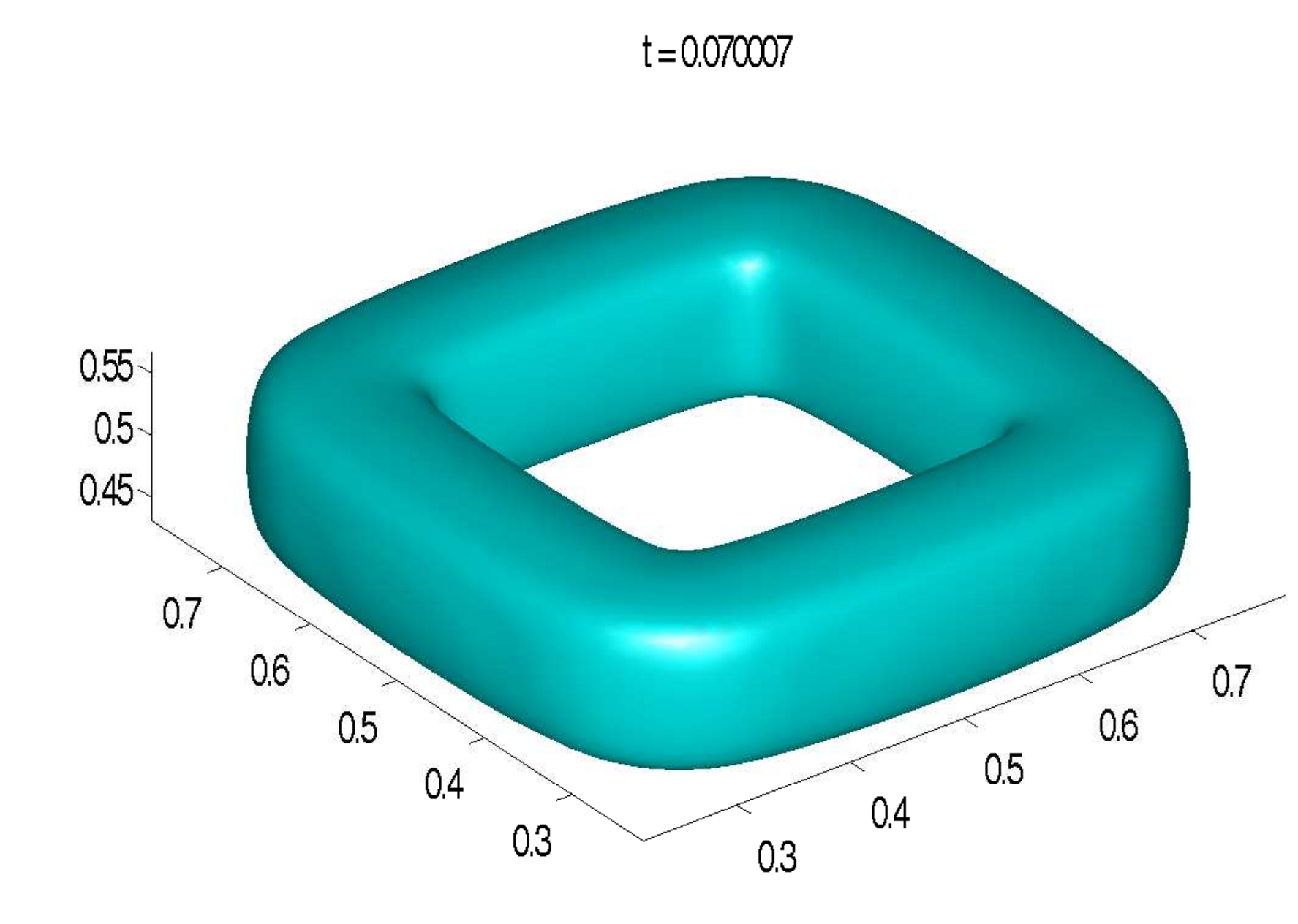}
\includegraphics[width=4cm]{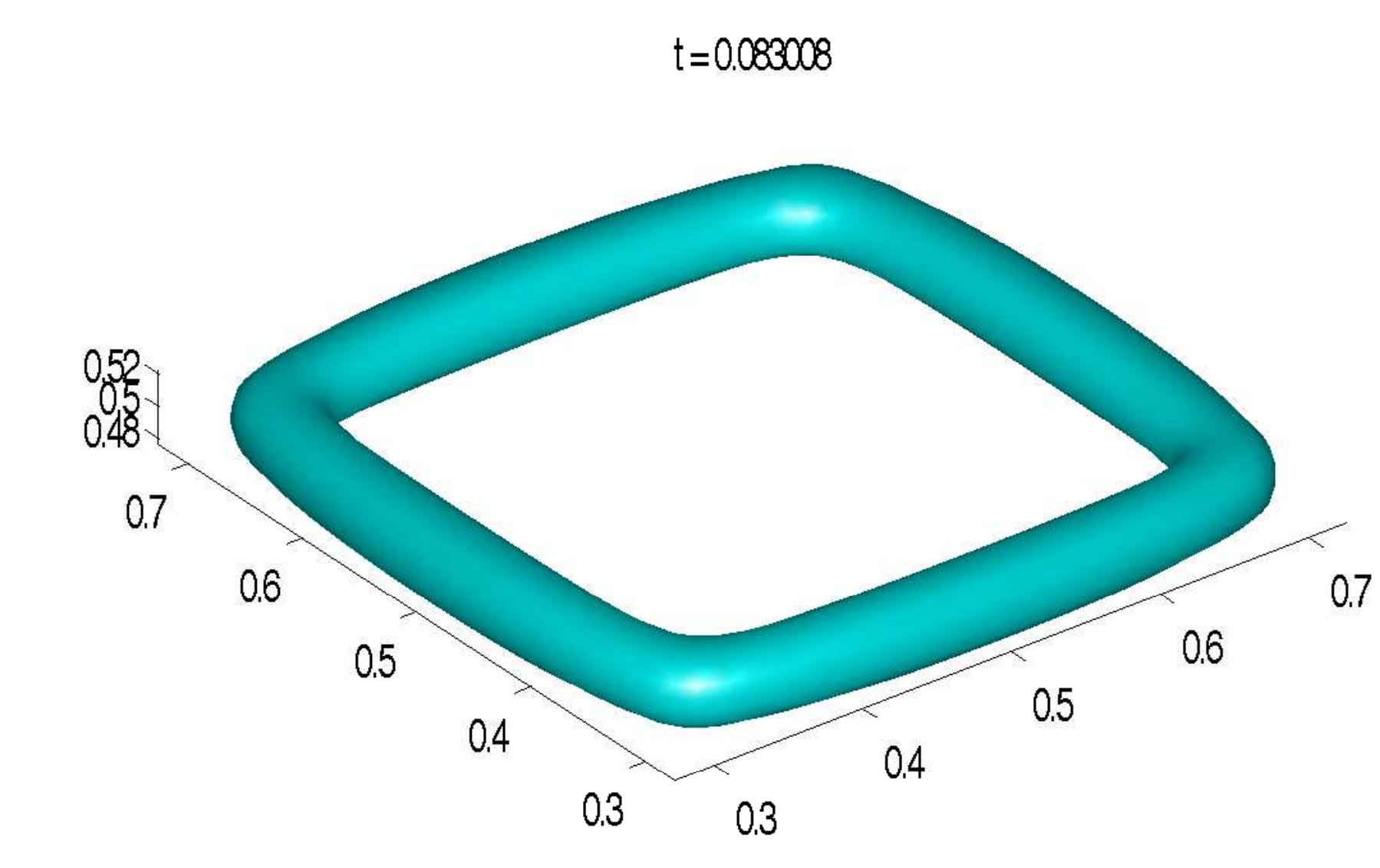}
  \caption{$\phi_5^o(\xi)$-evolution from an initial torus, at different times}
  \label{fig:test3d_ani_tore3} 
 \end{center}
  \end{figure}

\bibliographystyle{abbrv}
\bibliography{biblio}

\end{document}